\def\bfB{\mathbf{B}}
\def\ad{\text{ad}}
\def\diff{\text{d}}
\newcommand{\modu}{\operatorname{mod}}
\newcommand{\WS}{\mathcal{N}\mathcal{S}}
\newcommand{\WA}{\mathcal{N}\mathcal{A}}
\newcommand{\WH}{\mathcal{N}\mathcal{H}}
\newcommand{\Pgros}{\mathbb{P}}
\renewcommand{\epsilon}{\varepsilon}
\newcommand{\Mat}{\operatorname{M}}
\newcommand{\Mats}{\operatorname{S}}
\newcommand{\Mata}{\operatorname{A}}
\newcommand{\Math}{\operatorname{H}}
\newcommand{\GL}{\operatorname{GL}}
\newcommand{\Ker}{\operatorname{Ker}}
\newcommand{\End}{\operatorname{End}}
\newcommand{\NT}{\operatorname{NT}}
\newcommand{\Vect}{\operatorname{span}}
\newcommand{\im}{\operatorname{Im}}
\newcommand{\Rad}{\operatorname{Rad}}
\newcommand{\tr}{\operatorname{tr}}
\newcommand{\Tr}{\operatorname{Tr}}
\newcommand{\rk}{\operatorname{rk}}
\renewcommand{\setminus}{\smallsetminus}
\def\F{\mathbb{F}}
\def\K{\mathbb{K}}
\def\C{\mathbb{C}}
\def\calA{\mathcal{A}}
\def\calF{\mathcal{F}}
\def\calH{\mathcal{H}}
\def\calL{\mathcal{L}}
\def\calS{\mathcal{S}}
\def\calU{\mathcal{U}}
\def\calV{\mathcal{V}}
\def\lcro{\mathopen{[\![}}
\def\rcro{\mathclose{]\!]}}
\theoremstyle{definition}
\newtheorem{Def}{Definition}[section]
\newtheorem{Not}[Def]{Notation}
\theoremstyle{plain}
\newtheorem{theo}{Theorem}[section]
\newtheorem{prop}[theo]{Proposition}
\newtheorem{lemma}[theo]{Lemma}
\newtheorem{claim}{Claim}
\theoremstyle{plain}
\newtheorem{conj}{Conjecture}
\theoremstyle{remark}
\title{The structured Gerstenhaber problem (I)}
\author{Cl\'ement de Seguins Pazzis\footnote{Universit\'e de Versailles Saint-Quentin-en-Yvelines, Laboratoire de Math\'ematiques
de Versailles, 45 avenue des Etats-Unis, 78035 Versailles cedex, France, dsp.prof@gmail.com}}
\begin{document}

\thispagestyle{plain}

\maketitle

\begin{abstract}
Let $b$ be a symmetric or alternating bilinear form on a finite-dimensional vector space $V$.
When the characteristic of the underlying field is not $2$, we determine the greatest dimension for a linear subspace of nilpotent $b$-symmetric or $b$-alternating endomorphisms of $V$, expressing it as a function of the dimension, the rank, and the Witt index of $b$. Similar results are obtained for subspaces of nilpotent $b$-Hermitian endomorphisms when $b$ is a Hermitian form with respect to a non-identity involution.
In three situations ($b$-symmetric endomorphisms when $b$ is symmetric, $b$-alternating endomorphisms when $b$ is alternating, and
$b$-Hermitian endomorphisms when $b$ is Hermitian and the underlying field has more than $2$ elements), we also characterize the linear subspaces with the maximal dimension.

Our results are wide generalizations of results of Meshulam and Radwan~\cite{MeshulamRadwan}, who tackled the case of a non-degenerate symmetric bilinear form over the field of complex numbers, and recent results of Bukov\v{s}ek and Omladi\v{c}~\cite{BukovsekOmladic}, in which the spaces with maximal dimension were determined when the underlying field is the one of complex numbers, the bilinear form $b$ is symmetric and non-degenerate, and one considers
$b$-symmetric endomorphisms.
\end{abstract}

\vskip 2mm
\noindent
\emph{AMS Classification:} 15A30, 15A63, 15A03

\vskip 2mm
\noindent
\emph{Keywords:} Symmetric matrices, Skew-symmetric matrices, Nilpotent matrices, Bilinear forms, Dimension, Gerstenhaber theorem.


\section{Introduction}

\subsection{The problem}\label{ProblemSection}

Throughout, $\F$ denotes an arbitrary field and we consider a
finite-dimensional vector space $V$ over $\F$, equipped with a bilinear form $b : V \times V \rightarrow \F$
which is either symmetric ($\forall (x,y)\in V^2, \; b(x,y)=b(y,x)$)
or alternating ($\forall x \in V, \; b(x,x)=0$).
Given a subset $X$ of $V$, we denote by $X^\bot$ the set of all vectors $y \in V$ such that $\forall x \in X, \; b(x,y)=0$: this is a linear subspace of $V$. We say that $b$ is \textbf{non-degenerate} whenever $V^\bot=\{0\}$. We recall that if $b$ is non-degenerate and $X$ is a linear subspace of $V$, we have $\dim X+\dim X^\bot=\dim V$ and $(X^\bot)^\bot=X$.
A linear subspace $X$ of $V$ is called \textbf{totally singular} for $b$ if $X \subset X^\bot$;
the maximal dimension for such a subspace is called the \textbf{Witt index} of $b$.
Finally, we say that $b$ is \textbf{non-isotropic} whenever its Witt index equals zero, or equivalently
$b(x,x)\neq 0$ for all $x \in V \setminus \{0\}$.

An endomorphism $u$ of $V$ is called \textbf{$b$-symmetric} (respectively, \textbf{$b$-alternating})
whenever the bilinear form
$$(x,y) \in V^2 \mapsto b(x,u(y))$$
is symmetric (respectively, alternating).
The set of all $b$-symmetric endomorphisms is denoted by $\calS_b$. The set of all $b$-alternating
ones is denoted by $\calA_b$. Both sets are linear subspaces of the
space $\End(V)$ of all endomorphisms of $V$. At this point, it should be noted that our terminology can diverge
from more traditional ones if $b$ is alternating: set $\epsilon:=1$ if $b$ is symmetric, otherwise set $\epsilon:=-1$.
An endomorphism $u \in \End(V)$ is $b$-symmetric if and only if
$$\forall (x,y) \in V^2, \; b(u(x),y)=\epsilon\,b(x,u(y)),$$
and some mathematicians would rather call such an endomorphism $b$-\emph{skew-symmetric} if $\epsilon=-1$.
However, the consistency of the results from this article will amply justify our terminology.
In any case, if $u \in \End(V)$ is $b$-alternating then
$$\forall (x,y) \in V^2, \; b(u(x),y)=-\epsilon\,b(x,u(y)),$$
and the converse holds if the characteristic of $\F$ is not $2$. Hence, whenever $u$ is $b$-symmetric of $b$-alternating, there exists an $\epsilon' \in \{-1,1\}$ such that
$$\forall (x,y) \in V^2, \; b(u(x),y)=\epsilon'\,b(x,u(y)).$$

Finally, a subset of an $\F$-algebra is called \textbf{nilpotent} when all its elements are nilpotent.

\vskip 5mm
One of the most celebrated results in modern linear is Gerstenhaber's theorem \cite{Gerstenhaber} on linear subspaces of nilpotent matrices, which reads as follows:

\begin{theo}[Gerstenhaber (1958)]
Let $\calV$ be a nilpotent linear subspace of the space $\Mat_n(\F)$ of all $n$-by-$n$ square matrices with entries in $\F$.
Then,
$$\dim \calV \leq \dbinom{n}{2}$$
and equality holds if and only if there exists an invertible matrix $P \in \GL_n(\F)$ such that $P \calV P^{-1}$
equals the space $\NT_n(\F)$ of all strictly upper-triangular matrices of $\Mat_n(\F)$.
\end{theo}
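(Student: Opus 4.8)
We outline a proof. The statement consists of two parts: the inequality $\dim\calV\leq\binom{n}{2}$ and the description of the equality case. The plan is to prove both at once by induction on $n$, the case $n\leq 1$ being trivial since the only nilpotent $1\times 1$ matrix is $0$.

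\emph{The inequality.} The key elementary observation is that when $\calV$ is nilpotent, no nonzero $x\in\F^n$ lies in $\calV x:=\{Ax:A\in\calV\}$: from $x=Ax$ we would get $x=A^kx$ for every $k$, hence $x=0$ because $A$ is nilpotent. Thus $\dim(\calV x)\leq n-1$ for every $x\neq 0$, and since $\dim\calV=\dim(\calV x)+\dim\calV_x$ where $\calV_x:=\{A\in\calV:Ax=0\}$, we get $\dim\calV\leq(n-1)+\dim\calV_x$. Every $A\in\calV_x$ kills the line $\F x$ and so induces a nilpotent endomorphism $\overline A$ of $\F^n/\F x$; the induced space is a nilpotent subspace of $\Mat_{n-1}(\F)$, of dimension $\leq\binom{n-1}{2}$ by induction, and the kernel of $A\mapsto\overline A$ on $\calV_x$ is the space $N_x$ of all $A\in\calV$ of the form $y\mapsto\varphi(y)\,x$ for some linear form $\varphi$ with $\varphi(x)=0$, so $\dim N_x\leq n-1$. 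Altogether $\dim\calV\leq\dim(\calV x)+\dim N_x+\binom{n-1}{2}$, and since $\binom{n}{2}=(n-1)+\binom{n-1}{2}$, we are done as soon as we exhibit some $x$ with $\dim(\calV x)+\dim N_x\leq n-1$. \emph{This is the main obstacle:} a careless choice of $x$ only yields $\dim(\calV x)+\dim N_x\leq 2(n-1)$. A good $x$ should exist because $N_x\neq\{0\}$ forces $\F x$ to be the image of a rank-one nilpotent element of $\calV$, and such images ought not to fill the whole projective space of $\F^n$; but proving this over an arbitrary, possibly finite, field — and simultaneously controlling $\dim(\calV x)$ so that the sum stays $\leq n-1$ — requires a genuine analysis of the rank-one part of a nilpotent space. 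An alternative is Gerstenhaber's original route: after extending scalars to an infinite field, argue through the generic rank of $\calV$.

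\emph{The equality case.} Suppose $\dim\calV=\binom{n}{2}$. I claim this forces $\bigcap_{A\in\calV}\Ker A\neq\{0\}$. Granting this and applying it to the transpose space $\{A^{\mathsf T}:A\in\calV\}$, which is again nilpotent of dimension $\binom{n}{2}$, one obtains a nonzero linear form $\psi$ with $\psi\circ A=0$ for all $A\in\calV$, i.e.\ a hyperplane $H:=\Ker\psi$ with $\im A\subseteq H$, hence $AH\subseteq H$, for every $A\in\calV$. Restriction $A\mapsto A|_H$ then maps $\calV$ onto a nilpotent subspace $\calV|_H$ of $\Mat_{n-1}(\F)$ whose kernel is contained in the $(n-1)$-dimensional space $\{y\mapsto\psi(y)\,v:v\in H\}$; by rank--nullity together with the inequality already proved, $\dim\calV|_H=\binom{n-1}{2}$, so by the inductive classification there is a basis $(f_1,\dots,f_{n-1})$ of $H$ in which $\calV|_H=\NT_{n-1}(\F)$. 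Extending it to a basis $(f_1,\dots,f_n)$ of $\F^n$, every $A\in\calV$ is strictly upper-triangular — its first $n-1$ columns because $A|_H\in\NT_{n-1}(\F)$, its last because $\im A\subseteq H$ — so $\calV\subseteq\NT_n(\F)$, and equality of dimensions gives $\calV=\NT_n(\F)$. It remains to establish the claim $\bigcap_{A\in\calV}\Ker A\neq\{0\}$: running the inequality argument with a good $x$, the hypothesis $\dim\calV=\binom{n}{2}$ forces equality throughout — in particular $\overline{\calV_x}$ is a maximal nilpotent subspace of $\Mat_{n-1}(\F)$, hence conjugate to $\NT_{n-1}(\F)$ by induction — and a short further analysis then produces a nonzero vector lying in the kernel of every element of $\calV$. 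The converse implication, that $\NT_n(\F)$ is nilpotent of dimension $\binom{n}{2}$, is immediate.
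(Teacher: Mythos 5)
The paper does not prove Gerstenhaber's theorem: it is quoted as a classical result with references to Gerstenhaber, Serezhkin, and de Seguins Pazzis for proofs, and the article's subsequent sections merely \emph{adapt} an inductive proof of this theorem (following \cite{Macs}) to the structured setting. So there is no ``paper proof'' to compare against; I evaluate your proposal on its own merits.

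Your outline is in the right family (the inductive approach of MacDonald--MacDougall--Sweet, which the paper itself alludes to in Section~\ref{InductiveSection}), but as you acknowledge, it has two genuine gaps. The first, which you flag as ``the main obstacle,'' is in fact the heart of the proof and must be supplied --- and it can be, cleanly, without any choice of a ``good'' $x$ and without extending scalars. For \emph{every} nonzero $x$, write $N_x=\{\,\varphi(\cdot)\,x : \varphi\in L\,\}$ for a uniquely determined linear subspace $L$ of the dual $(\F^n)^*$; each such $\varphi$ annihilates $x$ (else $\varphi(\cdot)\,x$ has the nonzero eigenvalue $\varphi(x)$). Now take $A\in\calV$ and $B=\varphi(\cdot)\,x\in N_x$: then $AB=\varphi(\cdot)\,Ax$ has trace $\varphi(Ax)$, and since every $\F$-linear combination of $A$ and $B$ lies in the nilpotent space $\calV$, the trace orthogonality lemma (Lemma~\ref{tracecor}(a)) gives $\varphi(Ax)=0$. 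Hence every $\varphi\in L$ annihilates $\F x+\calV x$, whose dimension is $1+\dim(\calV x)$ because $x\notin\calV x$ (as you observed). Therefore $\dim N_x=\dim L\leq n-1-\dim(\calV x)$, which is exactly the estimate $\dim(\calV x)+\dim N_x\leq n-1$ you need. Your worry about finite fields and generic rank is misplaced: the trace orthogonality trick works over any field, and no special $x$ is required. You should be explicit about this step, since without it the induction does not close.

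The second gap is the claim $\bigcap_{A\in\calV}\Ker A\neq\{0\}$ when $\dim\calV=\binom{n}{2}$, which you invoke but do not prove; ``a short further analysis'' is not a proof. This is the genuinely delicate part of the equality case, and over small fields it is precisely the content of Serezhkin's contribution. Over sufficiently large fields one can instead argue as the paper does in the structured setting: extremality and trace orthogonality force $\calV$ to be closed under squares, hence under the Jordan product, and Jacobson's triangularization theorem then produces a common invariant flag --- in particular a common kernel vector. But whichever route you take, the claim needs an actual argument, and you should at least note that the uniform statement over \emph{all} fields (including $|\F|=2,3$) is nontrivial and historically required separate work.

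Finally, a minor imprecision: when you say the kernel of the restriction map $\calV\to\calV|_H$ ``is contained in the $(n-1)$-dimensional space $\{y\mapsto\psi(y)\,v : v\in H\}$,'' you should also observe that this containment uses both $A|_H=0$ \emph{and} $\im A\subseteq H$; the first gives $A=v\,\psi(\cdot)$ for some $v\in\F^n$, and the second forces $v\in H$. This is correct but worth spelling out.
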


In the original result of Gerstenhaber, the field was required to have at least $n$ elements.
This provision was later lifted \cite{Serezhkin} and the result was even generalized to skew fields
\cite{dSPGerstenhaberskew}.
Recent advances have been made in this problem: the inequality statement has been generalized to so-called \emph{trivial spectrum}
subspaces \cite{Quinlan,dSPlargerank}, i.e.\ linear subspaces in which the only possible eigenvalue of a matrix in the underlying field is zero; moreover the classification of trivial spectrum subspaces with the maximal dimension has been reduced to the classification of non-isotropic bilinear forms up to similarity \cite{dSPaffinenonsingular} if the underlying field has more than $2$ elements.
The Gerstenhaber problem has also been tackled in the more general setting of a Lie algebra:
in \cite{MeshulamRadwan}, Meshulam and Radwan showed that, given a complex semisimple Lie algebra $\mathfrak{g}$,
a linear subspace of $\mathfrak{g}$ that consists only of ad-nilpotent elements has dimension at most
$\frac{1}{2}(\dim \mathfrak{g}-\rk \mathfrak{g})$. In \cite{DraismaKraftKuttler},
this result was generalized to an arbitrary algebraically closed field
(ad-nilpotency should then be replaced with nilpotency, and $\mathfrak{g}$ should be the Lie algebra of a reductive algebraic group $G$), and,
except in special cases over fields with characteristic $2$ or $3$, the spaces with maximal dimension have been shown to be the Lie algebras of the Borel subgroups of $G$.

Gerstenhaber's theorem can be translated into a statement on linear subspaces of endomorphisms:
recall that a \textbf{flag} of $V$ is an increasing sequence $\calF=(F_i)_{0 \leq i \leq p}$
of linear subspaces of $V$, and such a flag is \textbf{complete} if $p=\dim V$ (so that $\dim F_i=i$ for all $i \in \lcro 0,p\rcro$).
An endomorphism $u$ of $V$ is said to \textbf{stabilize} the flag $\calF$ whenever $u(F_i) \subset F_i$ for all $i \in \lcro 0,p\rcro$.
If in addition $u$ is nilpotent and $\calF$ is complete, this condition is equivalent to having $u(F_i) \subset F_{i-1}$ for all $i \in \lcro 1,p\rcro$.
The set of all nilpotent endomorphisms that stabilize the complete flag $\calF$ is then a linear subspace of $\End(V)$, and if
we choose an $\calF$-adapted basis $\bfB$ of $V$ -- i.e.\ a basis
$(e_1,\dots,e_p)$ of $V$ such that $F_i=\Vect(e_1,\dots,e_i)$ for all $i \in \lcro 0,p\rcro$ --
it is seen to be isomorphic to $\NT_p(\F)$ under the isomorphism $u \in \End(V) \mapsto \Mat_\bfB(u) \in \Mat_p(\F)$.
Hence, Gerstenhaber's theorem has the following equivalent formulation in terms of endomorphisms:

\begin{theo}
Let $\calV$ be a nilpotent linear subspace of $\End(V)$, and set $n:=\dim V$.
Then,
$$\dim \calV \leq \dbinom{n}{2}$$
and equality holds if and only if $\calV$ is the set of all nilpotent endomorphisms of $V$ that stabilize
some fixed complete flag of $V$.
\end{theo}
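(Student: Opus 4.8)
The plan is to obtain this statement as a direct translation of the matrix form of Gerstenhaber's theorem recalled above, through the standard dictionary between endomorphisms and matrices. I would start by fixing an arbitrary basis $\bfB=(e_1,\dots,e_n)$ of $V$ and considering the $\F$-algebra isomorphism $\Phi : u \in \End(V) \mapsto \Mat_\bfB(u) \in \Mat_n(\F)$. Since $\Phi$ maps nilpotent elements to nilpotent elements in both directions and preserves the dimension of a linear subspace, the image $\Phi(\calV)$ is a nilpotent subspace of $\Mat_n(\F)$ with $\dim \Phi(\calV)=\dim\calV$, and the bound $\dim\calV\leq\binom{n}{2}$ drops out of the matrix statement.

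Next I would handle the easy implication in the equality case. If $\calF=(F_i)_{0\leq i\leq n}$ is a complete flag of $V$ and $\calW$ denotes the set of all nilpotent endomorphisms of $V$ that stabilize $\calF$, then choosing an $\calF$-adapted basis $\bfB'$ turns $u\mapsto\Mat_{\bfB'}(u)$ into an isomorphism from $\calW$ onto $\NT_n(\F)$ --- exactly the observation made just before the statement --- so $\dim\calW=\binom{n}{2}$; this takes care of the ``if'' direction.

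For the converse, assume $\dim\calV=\binom{n}{2}$. Then $\Phi(\calV)$ is a nilpotent subspace of $\Mat_n(\F)$ of dimension $\binom{n}{2}$, so Gerstenhaber's theorem yields $Q\in\GL_n(\F)$ with $Q\,\Phi(\calV)\,Q^{-1}=\NT_n(\F)$. I would then let $\bfB'=(e'_1,\dots,e'_n)$ be the basis of $V$ deduced from $\bfB$ via the change-of-basis matrix $Q^{-1}$, so that $\Mat_{\bfB'}(u)=Q\,\Mat_\bfB(u)\,Q^{-1}$ for all $u\in\End(V)$; consequently $\Mat_{\bfB'}(u)\in\NT_n(\F)$ for every $u\in\calV$. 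Putting $F_i:=\Vect(e'_1,\dots,e'_i)$ produces a complete flag $\calF$, and every $u\in\calV$ satisfies $u(F_i)\subset F_{i-1}$, hence stabilizes $\calF$; thus $\calV\subset\calW$ with $\calW$ as above, and since $\dim\calW=\binom{n}{2}=\dim\calV$ we conclude $\calV=\calW$.

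I do not expect a genuine obstacle here: the entire mathematical content sits in the matrix version, which may be invoked. The only points requiring care are the bookkeeping that converts conjugation of matrices into a change of basis (getting the side of $Q$ and the inverse right), and the use --- already isolated in the excerpt --- of the fact that for a nilpotent endomorphism, stabilizing a complete flag is equivalent to lowering it, which is what makes the identification of $\calW$ with $\NT_n(\F)$ immediate.
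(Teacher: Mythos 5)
Your argument is exactly the translation the paper has in mind: it recalls the dictionary between complete flags, adapted bases, and $\NT_n(\F)$, then states the endomorphism form as an ``equivalent formulation'' of the matrix theorem. Your write-up simply spells out that equivalence carefully (including the change-of-basis bookkeeping), so it is correct and follows the paper's intended route.
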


Instead of the above problem, which we call the \emph{standard} Gerstenhaber problem,
here we consider the \emph{structured} Gerstenhaber problem,
in which the nilpotent subspace $\calV$ under consideration is assumed to be either a subspace of $\calS_b$ or one of $\calA_b$
(note that this condition is trivial if $b=0$, in which case there is no difference with the standard Gerstenhaber problem).
In the structured Gerstenhaber problem, one asks the following questions, which are the counterpart of those in the standard Gerstenhaber problem:
\begin{itemize}
\item What is the maximal dimension for a nilpotent linear subspace of $\calS_b$ (respectively, of $\calA_b$)?
\item What are the nilpotent linear subspaces of $\calS_b$ (respectively, of $\calA_b$) with maximal dimension?
\end{itemize}
In the structured Gerstenhaber problem,  there are additional constraints and one expects the dimension bound to be lower than the one in the standard Gerstenhaber problem
(and of course the spaces with maximal dimension should have a different structure).
Similar questions can be raised with respect to Hermitian forms: so as not to burden this introduction with
considerations on Hermitian forms, we will say nothing about them here, and will tackle the problem in Section \ref{HermitianSection} only.

\vskip 3mm
One important remark is that the structured Gerstenhaber problem can be essentially reduced to the following separate problems:
\begin{itemize}
\item The structured Gerstenhaber problem in the special case when the form $b$ under consideration is non-degenerate.
\item The standard Gerstenhaber problem.
\end{itemize}
To understand this, recall that the \textbf{radical} of $b$ is defined as the linear subspace
$$\Rad(b):=V^\bot.$$
For $x \in V$, denote by $\overline{x}$ its class modulo $\Rad(b)$.
Then, $b$ induces a non-degenerate bilinear form $\overline{b}$ on $V/\Rad(b)$ such that
$\overline{b}(\overline{x},\overline{y})=b(x,y)$ for all $(x,y)\in V^2$. Moreover,
$\overline{b}$ is symmetric (respectively, alternating) if $b$ is symmetric (respectively, alternating).

We have the following elementary result:

\begin{lemma}
Let $u \in \End(V)$. Then, $u$ is $b$-symmetric (respectively, $b$-alternating) if and only if $u$ stabilizes
$\Rad(b)$ and the induced endomorphism $\overline{u}$ on $V/\Rad(b)$ is $\overline{b}$-symmetric (respectively,
$\overline{b}$-alternating).
\end{lemma}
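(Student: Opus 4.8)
The plan is to introduce the auxiliary bilinear form $\beta_u\colon (x,y)\in V^2\mapsto b(x,u(y))$, so that by definition $u$ is $b$-symmetric (resp.\ $b$-alternating) exactly when $\beta_u$ is symmetric (resp.\ alternating), and to do the same with $\overline{b}$ and $\overline{u}$ once the latter is available. Throughout I would use two elementary facts that hold over an arbitrary field: an alternating bilinear form is automatically skew-symmetric (expand it on $x+y$), and the radical of a symmetric or alternating form is two-sided, i.e.\ $x\in\Rad(b)$ forces $b(x,z)=b(z,x)=0$ for all $z\in V$.

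First I would show that every $b$-symmetric or $b$-alternating $u$ stabilizes $\Rad(b)$. In the $b$-symmetric case $\beta_u$ is symmetric, so $b(y,u(x))=b(x,u(y))$ for all $x,y$; in the $b$-alternating case $\beta_u$ is alternating, hence skew-symmetric, so $b(y,u(x))=-b(x,u(y))$. In either case, if $x\in\Rad(b)$ then $b(x,u(y))=0$ for every $y\in V$, whence $b(y,u(x))=0$ for every $y\in V$, and the two-sidedness of the radical yields $u(x)\in\Rad(b)$. Consequently $\overline{u}$ is a well-defined endomorphism of $V/\Rad(b)$, determined by $\overline{u}(\overline{y})=\overline{u(y)}$.

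Next, assuming merely that $u$ stabilizes $\Rad(b)$, I would record the identity
$$\overline{b}\bigl(\overline{x},\overline{u}(\overline{y})\bigr)=\overline{b}\bigl(\overline{x},\overline{u(y)}\bigr)=b(x,u(y))\qquad\text{for all }x,y\in V,$$
which says precisely that $\beta_u$ is the pullback of $(\overline{b})_{\overline{u}}$ along the canonical surjection $V\to V/\Rad(b)$. Since that surjection is onto, $\beta_u$ is symmetric (resp.\ alternating) if and only if $(\overline{b})_{\overline{u}}$ is: the forward implication is immediate, and the reverse one uses that every pair of vectors of $V/\Rad(b)$ lifts to a pair of vectors of $V$. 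This delivers both halves of the lemma at once. If $u$ is $b$-symmetric (resp.\ $b$-alternating), it stabilizes $\Rad(b)$ by the previous step and the displayed identity makes $\overline{u}$ be $\overline{b}$-symmetric (resp.\ $\overline{b}$-alternating); conversely, if $u$ stabilizes $\Rad(b)$ and $\overline{u}$ is $\overline{b}$-symmetric (resp.\ $\overline{b}$-alternating), the same identity forces $\beta_u$ to be symmetric (resp.\ alternating), i.e.\ $u$ to be $b$-symmetric (resp.\ $b$-alternating).

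I do not expect any real obstacle: once the two elementary facts above are isolated, the statement is essentially bookkeeping. The single point deserving care is to avoid dividing by $2$ — one should not try to pass through the relation $b(u(x),y)=-\epsilon\,b(x,u(y))$ when deducing consequences of $b$-alternatingness (its converse fails in characteristic $2$), but rather argue directly from $\beta_u(x,x)=0$ and its automatic skew-symmetric consequence, so that the proof is uniform in the characteristic.
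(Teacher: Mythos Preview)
Your proof is correct and follows essentially the same route as the paper's: both arguments reduce the forward direction to the skew-symmetry identity $b(y,u(x))=\pm b(x,u(y))$ (which you obtain from the alternating property of $\beta_u$ rather than citing the $\epsilon'$-identity) and handle the converse via the pullback relation $b(x,u(y))=\overline{b}\bigl(\overline{x},\overline{u}(\overline{y})\bigr)$. Your presentation is a bit tidier in that the pullback identity dispatches both implications and both cases uniformly, and your explicit avoidance of division by $2$ is a nice touch, but the underlying argument is the same.
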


\begin{proof}
Assume that $u$ is $b$-symmetric (respectively, $b$-alternating).
Then, for all $x \in \Rad(b)$, we have $\forall y \in V, \; b(u(x),y)=\epsilon\, b(x,u(y))=0$ for some $\epsilon \in \{1,-1\}$, whence $u(x) \in \Rad(V)$.
It is then easily checked that the induced endomorphism $\overline{u}$ on $V/\Rad(b)$ is $\overline{b}$-symmetric
(respectively, $\overline{b}$-alternating).
Conversely, assume that $u$ stabilizes $\Rad(b)$ and that the induced endomorphism $\overline{u}$ on $V/\Rad(b)$ is $\overline{b}$-symmetric.
For all $(x,y)\in V^2$, we have
$$b(x,u(y))=\overline{b}(\overline{x},\overline{u(y)})=\overline{b}(\overline{x},\overline{u}(\overline{y}))$$
and since we know that $(x',y')\in (V/\Rad(b))^2 \mapsto \overline{b}(x',\overline{u}(y'))$ is symmetric we conclude that so is
$(x,y) \in V^2 \mapsto b(x,u(y))$, i.e.\ $u$ is $b$-symmetric.
Likewise if $u$ stabilizes $\Rad(b)$ and the induced endomorphism $\overline{u}$ on $V/\Rad(b)$ is $\overline{b}$-alternating,
we see that $b(u,u(x))=\overline{b}(\overline{x},\overline{u}(\overline{x}))=0$ for all $x \in V$, whence $u$ is $b$-alternating.
\end{proof}

Hence, we obtain surjective linear mappings
$$R_s : u \in \calS_b \mapsto (u_{|\Rad(b)}, \overline{u})\in \End\bigl(\Rad(b)\bigr) \times \calS_{\overline{b}}.$$
and
$$R_a : u \in \calA_b \mapsto (u_{|\Rad(b)}, \overline{u})\in \End\bigl(\Rad(b)\bigr) \times \calA_{\overline{b}}.$$
It is easily seen that $R_s$ and $R_a$ have the same kernel, which equals the space of all the endomorphisms of $V$ that vanish everywhere on
$\Rad(b)$ and have their range included in $\Rad(b)$, and this vector space is naturally isomorphic to
$\calL(V/\Rad(b),\Rad(b))$. Moreover, an endomorphism $u \in \calS_b \cup \calA_b$ is nilpotent if and only if $u_{|\Rad(b)}$ and $\overline{u}$
are nilpotent. Hence, a nilpotent linear subspace of $\calS_b$ (respectively, of $\calA_b$) is just a linear subspace of the
inverse image under $R_s$ (respectively, under $R_a$) of $\calV_1 \times \calV_2$ for some nilpotent linear subspace $\calV_1$ of $\Rad(b)$
and some nilpotent linear subspace $\calV_2$ of $\calS_{\overline{b}}$ (respectively, of $\calA_{\overline{b}}$).

\subsection{Examples: the matrix viewpoint}\label{matrixExamples}

It is high time we gave examples of large spaces of nilpotent $b$-symmetric or $b$-alternating endomorphisms.
So as to better visualize things, we will start from the matrix viewpoint; the geometric
viewpoint will be dealt with in the next section.

First, some additional notation on matrices. For a non-negative integer $n$, remember that $\Mat_n(\F)$ denotes the algebra of all $n$-by-$n$ matrices with entries in $\F$ (we respectively denote by $I_n$ and $0_n$ its identity matrix and its zero matrix).
We denote by $\Mat_{n,p}(\F)$ the vector space of all $n$-by-$p$ matrices with entries in $\F$.
We denote by $\Mats_n(\F)$ the space of all symmetric $n$-by-$n$ matrices, and by $\Mata_n(\F)$ the space of all alternating $n$-by-$n$ matrices (a matrix $A \in \Mat_n(\F)$ is called alternating when the bilinear
form $(X,Y) \in (\F^n)^2 \mapsto X^T AY$ is alternating, which means that $A^T=-A$ and that all the diagonal entries of $A$ are zero).

Let $S \in \Mat_n(\F)$ be symmetric or alternating. We consider the corresponding
bilinear form $B : (X,Y) \in (\F^n)^2 \mapsto X^TSY$, which is symmetric if $S$ is symmetric, and alternating if $S$ is alternating. Then, given $M \in \Mat_n(\F)$, the endomorphism
$X \mapsto MX$ of $\F^n$ is $B$-symmetric (respectively, $B$-alternating) if and only if
$SM$ is symmetric (respectively, alternating). This motivates that we set
$$\calS_S:=\bigl\{M \in \Mat_n(\F) : SM \in \Mats_n(\F)\bigr\},$$
the set of all \textbf{$S$-symmetric} matrices, and
$$\calA_S:=\bigl\{M \in \Mat_n(\F) : SM \in \Mata_n(\F)\bigr\},$$
the set of all \textbf{$S$-alternating} matrices.
Now, say that we have an arbitrary basis $\bfB:=(e_1,\dots,e_n)$ of $V$ and
we take $$S:=\Mat_{\bfB}(b)=\bigl(b(e_i,e_j)\bigr)_{1 \leq i,j \leq n,}$$
which is
symmetric (respectively, alternating) if and only if $b$ is symmetric (respectively, alternating).
Then, the isomorphism of algebras
$$u \in \End(V) \mapsto \Mat_\bfB(u) \in \Mat_n(\F)$$
induces isomorphisms of vector spaces
$$u \in \calS_b \mapsto \Mat_\bfB(u) \in \calS_S \qquad \text{and} \qquad u \in \calA_b \mapsto \Mat_\bfB(u) \in \calA_S$$
that preserve nilpotency.

Assume now that $b$ is non-degenerate and alternating. Then $n$ is even,
the Witt index of $b$ is $\nu:=\frac{n}{2}$ and there is a basis
$\bfB$ of $V$ in which
$$\Mat_\bfB(b)=K_n:=\begin{bmatrix}
0_\nu & I_\nu \\
-I_\nu & 0_\nu
\end{bmatrix}.$$
A straightforward computation then shows that
$$\calS_{K_n}=\Biggl\{
\begin{bmatrix}
A & C \\
B & -A^T
\end{bmatrix} \mid A \in \Mat_\nu(\F), \; (B,C)\in \Mats_\nu(\F)^2
\Biggr\}.$$
Likewise,
$$\calA_{K_n}=\Biggl\{
\begin{bmatrix}
A & C \\
B & A^T
\end{bmatrix} \mid A \in \Mat_\nu(\F), \; (B,C)\in \Mata_\nu(\F)^2
\Biggr\}.$$
It is then easily checked that the space
$$\WS_\nu:=\biggl\{\begin{bmatrix}
N & C \\
0_\nu & -N^T
\end{bmatrix} \mid N \in \NT_\nu(\F), \; C \in \Mats_\nu(\F)
\biggr\}$$
consists of $K_n$-symmetric nilpotent matrices. Clearly this space has dimension
$\dbinom{\nu}{2}+\dbinom{\nu+1}{2}=\nu^2=\nu(n-\nu)$.
Next, the space
$$\WA_\nu:=\biggl\{\begin{bmatrix}
N & C \\
0 & N^T
\end{bmatrix} \mid N \in \NT_\nu(\F), \; C \in \Mata_\nu(\F)
\biggr\}$$
consists of $K_n$-alternating nilpotent matrices, and
one checks that it has dimension $2\dbinom{\nu}{2}=\nu(\nu-1)=\nu(n-\nu-1)$.

\vskip 3mm
Next, we consider the case when $b$ is non-degenerate and symmetric, and $\F$ does not have
characteristic $2$. Note that $b$ is equivalent to the orthogonal direct sum of a
$2\nu$-dimensional hyperbolic symmetric bilinear form and of a non-isotropic symmetric bilinear form.
Set $p:=n-2\nu$. Then, there is a basis $\bfB$ of $V$ together with a nonisotropic symmetric matrix
$P \in \Mats_p(\F)$ such that
$$\Mat_\bfB(b)=S_{\nu,P}:=\begin{bmatrix}
0_\nu & [0]_{\nu \times p} & I_\nu \\
[0]_{p \times \nu} & P & [0]_{p \times \nu} \\
I_\nu & [0]_{\nu \times p} & 0_\nu
\end{bmatrix}.$$
A straightforward computation then shows that the $b$-symmetric endomorphisms of $V$ are those that are represented in $\bfB$ by a matrix of the form
$$\begin{bmatrix}
A & (PC)^T & B \\
D & P^{-1} S & C \\
E & (PD)^T & A^T
\end{bmatrix}$$
in which $A \in \Mat_\nu(\F)$, $B,E$ belong to $\Mats_\nu(\F)$,
$C,D$ belong to $\Mat_{p,\nu}(\F)$ and $S \in \Mats_p(\F)$.
Likewise, the $b$-alternating endomorphisms of $V$ are those that are represented in $\bfB$ by a matrix of the form
$$\begin{bmatrix}
A & -(PC)^T & B \\
D & P^{-1} S & C \\
E & -(PD)^T & -A^T
\end{bmatrix}$$
in which $A \in \Mat_\nu(\F)$, $B$ and $E$ belong to $\Mata_\nu(\F)$,
$C$ and $D$ belong to $\Mat_{p,\nu}(\F)$ and $S \in \Mata_p(\F)$.

It is then easily checked that the space
$$\WS_{\nu,P}:=\biggl\{
\begin{bmatrix}
N & (PC)^T & B \\
[0]_{p \times \nu} & 0_p & C \\
0_\nu & [0]_{\nu \times p} & N^T
\end{bmatrix} \mid  N \in \NT_\nu(\F), \; C \in \Mat_{p \times \nu}(\F), \; B \in \Mats_\nu(\F)\biggr\}$$
consists of $S_{\nu,P}$-symmetric nilpotent matrices; moreover its dimension equals
$\nu^2+\nu p=\nu(n-\nu)$.
Next, the space
$$\WA_{\nu,P}:=\Biggl\{
\begin{bmatrix}
N & -(PC)^T & B \\
[0]_{p \times \nu} & 0_p & C \\
0_\nu & [0]_{\nu \times p} & -N^T
\end{bmatrix} \mid  N \in \NT_\nu(\F), \; C \in \Mat_{p \times \nu}(\F), \; B \in \Mata_\nu(\F)\Biggr\}$$
consists of $S_{\nu,P}$-alternating nilpotent matrices, and
one checks that its dimension equals $\nu(\nu-1)+\nu p=\nu(n-\nu-1)$.

\subsection{Examples: the geometric viewpoint}\label{geomExamples}

We now turn to a more geometric viewpoint of the above matrix spaces.
We start with two very basic lemmas on the properties of $b$-symmetric and $b$-alternating endomorphisms.

\begin{lemma}\label{stableortholemma}
Let $b$ be a non-degenerate symmetric or alternating bilinear form on $V$, and let
$u\in \calS_b \cup \calA_b$.
Then:
\begin{enumerate}[(a)]
\item For every linear subspace $W$ of $V$ that is stable under $u$, the subspace
$W^\bot$ is stable under $u$.
\item One has $\Ker u=(\im u)^\bot$.
\end{enumerate}
\end{lemma}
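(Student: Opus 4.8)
The plan is to rely on the single identity recorded at the end of Section~\ref{ProblemSection}: since $u \in \calS_b \cup \calA_b$, there is a sign $\epsilon' \in \{-1,1\}$ such that $b(u(x),y) = \epsilon'\, b(x,u(y))$ for all $(x,y) \in V^2$. Because $\epsilon' = \pm 1$ we may equally read this as $b(x,u(y)) = \epsilon'\, b(u(x),y)$, so that transferring $u$ from one argument of $b$ to the other costs only a harmless sign. This is essentially the whole content of both items.

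For (a), let $W$ be a linear subspace with $u(W) \subseteq W$ and pick $y \in W^\bot$. For an arbitrary $x \in W$ we have $b(x,u(y)) = \epsilon'\, b(u(x),y)$, and the right-hand side vanishes because $u(x) \in W$ while $y \in W^\bot$. Letting $x$ run over $W$ shows $u(y) \in W^\bot$, hence $u(W^\bot) \subseteq W^\bot$.

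For (b), first take $x \in \Ker u$; then for every $y \in V$ we get $b(x,u(y)) = \epsilon'\, b(u(x),y) = 0$, so $x \in (\im u)^\bot$, giving $\Ker u \subseteq (\im u)^\bot$. Conversely, if $x \in (\im u)^\bot$ then $b(x,u(y)) = 0$ for all $y \in V$, hence $b(u(x),y) = \epsilon'\, b(x,u(y)) = 0$ for all $y \in V$; since $b$ is non-degenerate this forces $u(x) = 0$, i.e.\ $x \in \Ker u$. The two inclusions yield the claimed equality.

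There is no genuine obstacle here, as each item reduces to a one-line manipulation of the defining identity; the only point deserving (mild) attention is that the non-degeneracy hypothesis is used exactly once, in the second inclusion of (b), to pass from ``$b(u(x),\cdot)$ is identically zero'' to ``$u(x)=0$''. One can also view (b) conceptually: for any endomorphism $w$ of $V$ and any non-degenerate $b$ one has $\Ker w = (\im w^{\sharp})^\bot$ for the $b$-adjoint $w^{\sharp}$, and here $u^{\sharp} = \epsilon'\, u$ has the same image as $u$.
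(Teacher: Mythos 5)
Your proof is correct and follows essentially the same route as the paper's: both rely on the identity $b(u(x),y)=\epsilon'\,b(x,u(y))$ to transfer $u$ across $b$, then apply it directly for (a) and (after unwinding the definition of $(\im u)^\bot$) for (b), with non-degeneracy entering exactly where you flag it. The closing remark about the $b$-adjoint is a pleasant conceptual gloss but does not change the substance of the argument.
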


\begin{proof}
Remember that there exists an $\epsilon' \in \{-1,1\}$ such that
$$\forall (x,y)\in V^2, \quad b(x,u(y))=\epsilon'\, b(u(x),y).$$
Let $W$ be a linear subspace of $V$ that is stable under $u$.
Let $x \in W^\bot$. For all $y \in W$, we have $b(u(x),y)=\epsilon'\, b(x,u(y))=0$, whence $u(x) \in W^\bot$.
This proves point (a).

Next, for all $x \in V$, we see that
$x \in (\im u)^\bot$ is successively equivalent to $\forall y \in V, \; b(x,u(y))=0$, to
$\forall y \in V, \; b(u(x),y)=0$, and finally to $u(x)=0$ since $b$ is non-degenerate.
This yields point (b).
\end{proof}

\begin{lemma}\label{nonisotropicLemma}
Let $b$ be a non-isotropic symmetric bilinear form on $V$, and
$u$ be a nilpotent $b$-symmetric or $b$-alternating endomorphism of $V$.
Then, $u=0$.
\end{lemma}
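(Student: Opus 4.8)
The plan is to argue by contradiction, using the relation $\Ker u=(\im u)^\bot$ supplied by Lemma~\ref{stableortholemma}(b). The first point to record is that a non-isotropic form is automatically non-degenerate: if $x\in\Rad(b)\setminus\{0\}$ then $b(x,x)=0$, contradicting the definition of non-isotropy. Hence $b$ is non-degenerate and Lemma~\ref{stableortholemma} applies to $u$.

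Now I would assume $u\neq 0$ and let $p\geq 1$ be the index of nilpotency of $u$, so that $u^{p}=0$ while $u^{p-1}\neq 0$; since $u\neq 0$, necessarily $p\geq 2$. Picking $x\in V$ with $u^{p-1}(x)\neq 0$ and setting $y:=u^{p-1}(x)$, one obtains a nonzero vector $y$ that lies in $\im u$ (because $p-1\geq 1$) and also in $\Ker u$ (because $u(y)=u^{p}(x)=0$). By Lemma~\ref{stableortholemma}(b), $y\in\Ker u=(\im u)^\bot$, so $b(y,z)=0$ for every $z\in\im u$; taking $z=y$ yields $b(y,y)=0$ with $y\neq 0$, contradicting the assumption that $b$ is non-isotropic. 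Therefore $u=0$.

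There is essentially no obstacle here beyond remembering to verify non-degeneracy so that Lemma~\ref{stableortholemma} may be invoked; the crux is simply the standard observation that a nonzero nilpotent endomorphism has a nonzero vector in $\Ker u\cap\im u$, which collides with the self-orthogonality forced by $\Ker u=(\im u)^\bot$. An alternative route would be an induction on $\dim V$: peel off $W:=\Ker u$, use that the restriction of a non-isotropic form to any subspace is again non-isotropic to get $V=W\oplus W^\bot$, note that $W^\bot$ is $u$-stable by Lemma~\ref{stableortholemma}(a), and apply the induction hypothesis to $u_{|W^\bot}$; but the direct argument above is shorter and cleaner.
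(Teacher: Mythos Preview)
Your proof is correct and follows essentially the same approach as the paper: both arguments hinge on $\Ker u=(\im u)^\bot$ together with the incompatibility between non-isotropy (which forces $\Ker u\cap\im u=\{0\}$) and nonzero nilpotency (which forces $\Ker u\cap\im u\neq\{0\}$). The paper phrases the contradiction as ``$u$ induces a nilpotent automorphism of $\im u$,'' while you exhibit an explicit isotropic vector; your added remark that non-isotropy implies non-degeneracy, so that Lemma~\ref{stableortholemma} indeed applies, is a point the paper leaves implicit.
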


\begin{proof}
Indeed, $\Ker u=(\im u)^\bot$ by Lemma \ref{stableortholemma}. Since $b$ is non-isotropic,
it follows that $\im u$ is a complementary subspace of $\Ker u$. Thus, $u$ induces
an automorphism of $\im u$; yet this induced endomorphism should be nilpotent. Therefore
$\im u=\{0\}$, which shows that $u=0$.
\end{proof}

\begin{Def}
Remember that a flag of $V$ is an increasing list $\calF=(F_0,\dots,F_p)$
of linear subspaces of $V$. We call such a flag \textbf{partially complete} if $\dim F_i=i$
for all $i \in \lcro 0,p\rcro$. Given a bilinear form $b$ on $V$ (symmetric or alternating), we call such a flag
\textbf{$b$-singular} if $F_p$ is totally $b$-singular.
A partially complete $b$-singular flag $(F_0,\dots,F_p)$ of $V$ is called \textbf{maximal} if $p$ equals the Witt index of $b$.
\end{Def}

To construct a partially complete $b$-singular flag of $V$, it suffices to
start from a totally $b$-singular subspace $G$ of $V$ and to take a complete flag of $G$.

\begin{prop}\label{dimspecialprop}
Let $b$ be a non-degenerate symmetric or alternating bilinear form on a vector space $V$ with dimension $n$, with Witt index $\nu$. Let $\calF$ be a maximal partially complete $b$-singular flag of $V$. Then:
\begin{enumerate}[(a)]
\item The set $\WS_{b,\calF}$ of all nilpotent endomorphisms
$u \in \calS_b$ that stabilize $\calF$ is a linear subspace of dimension $\nu(n-\nu)$.
\item The set $\WA_{b,\calF}$ of all nilpotent endomorphisms
$u \in \calA_b$ that stabilize $\calF$ is a linear subspace of dimension $\nu(n-\nu-1)$.
\end{enumerate}
\end{prop}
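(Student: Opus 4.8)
The plan is to reduce to the matrix computations already carried out in Section~\ref{matrixExamples}, by showing that a maximal partially complete $b$-singular flag is essentially unique up to the action of the isometry group, and that a suitable adapted basis brings $b$ into one of the standard forms $K_n$ or $S_{\nu,P}$ in which moreover $\calF$ is the "obvious" coordinate flag. Concretely, write $G:=F_\nu$ for the top space of the flag, a maximal totally singular subspace. By Witt's theorem (available since $\car\F\neq 2$ in the symmetric case, and unconditionally in the alternating case), $G$ sits inside a hyperbolic subspace $H$ of dimension $2\nu$ with $V=H\perp^{\!\!\bot} G'$ where $G'=H^\bot$ is non-isotropic; one then picks a basis $(e_1,\dots,e_\nu)$ of $G$ adapted to $\calF$ (i.e.\ $F_i=\Vect(e_1,\dots,e_i)$), a dual hyperbolic basis $(e_{\nu+?},\dots)$ and a basis of $G'$, so that $\Mat_\bfB(b)$ is exactly $K_n$ (alternating case) or $S_{\nu,P}$ (symmetric case, with $P$ the Gram matrix of $b$ on $G'$, which is non-isotropic). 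Under the algebra isomorphism $u\mapsto\Mat_\bfB(u)$, the condition "$u\in\calS_b$ stabilizes $\calF$" becomes "$M\in\calS_S$ is block upper-triangular with respect to the flag", and a direct inspection of the block forms displayed in Section~\ref{matrixExamples} shows this space is precisely $\WS_\nu$ (resp.\ $\WS_{\nu,P}$); likewise for the alternating case with $\WA_\nu$ (resp.\ $\WA_{\nu,P}$).

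For the \textbf{nilpotency} claim I would argue as follows: if $u\in\calS_b\cup\calA_b$ stabilizes $\calF$, then by Lemma~\ref{stableortholemma}(a) it also stabilizes $G^\bot=F_\nu^\bot$. Hence $u$ stabilizes the chain $0\subset F_1\subset\cdots\subset F_\nu=G\subset G^\bot\subset V$. On the successive quotients: $u$ acts nilpotently on $F_\nu$ (it stabilizes the complete flag $(F_i)_{0\le i\le\nu}$ of $G$, hence is strictly upper-triangular there), it acts nilpotently on $V/G^\bot$ (dually, since $G^\bot$ has codimension $\nu$ and by Lemma~\ref{stableortholemma}(b) the flag structure forces strict triangularity again — or simply transpose), and on the middle quotient $G^\bot/G$ the form $b$ induces a \emph{non-isotropic} symmetric or alternating form and $u$ induces a $\overline b$-symmetric or $\overline b$-alternating endomorphism, which by Lemma~\ref{nonisotropicLemma} is zero if the induced form is symmetric. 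In the alternating case the middle quotient carries a non-degenerate alternating form, not obviously forcing vanishing — but there the explicit block shape ($N$ strictly upper-triangular, middle block $0_p$ or absent) already exhibits nilpotency, so one needn't run the abstract argument. In all cases the induced map on each quotient is nilpotent, so $u$ itself is nilpotent; conversely every element of the explicit spaces is visibly nilpotent.

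Finally the \textbf{dimension count} is read off from the explicit descriptions: in the alternating case, $\dim\WS_\nu=\binom{\nu}{2}+\binom{\nu+1}{2}=\nu^2=\nu(n-\nu)$ and $\dim\WA_\nu=2\binom{\nu}{2}=\nu(\nu-1)=\nu(n-\nu-1)$ (using $n=2\nu$); in the symmetric case, $\dim\WS_{\nu,P}=\binom{\nu}{2}+\binom{\nu+1}{2}+\nu p=\nu^2+\nu p=\nu(n-\nu)$ and $\dim\WA_{\nu,P}=2\binom{\nu}{2}+\nu p=\nu(\nu-1)+\nu p=\nu(n-\nu-1)$ (using $n=2\nu+p$), exactly as asserted in Section~\ref{matrixExamples}. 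Since the isomorphism $u\mapsto\Mat_\bfB(u)$ carries $\WS_{b,\calF}$ onto the relevant matrix space, the proof is complete.

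The step I expect to be the \textbf{main obstacle} is the careful bookkeeping needed to verify that the block-triangular elements of $\calS_S$ (resp.\ $\calA_S$) with respect to the standard flag coincide on the nose with $\WS_\nu$/$\WS_{\nu,P}$ (resp.\ $\WA_\nu$/$\WA_{\nu,P}$): one must check that stabilizing $F_\nu$ forces the bottom-left $\nu\times\nu$ block of the matrix to vanish and the middle blocks $D$ to vanish, and that stabilizing the whole flag $(F_i)_{i\le\nu}$ forces the top-left block $A$ to be strictly upper-triangular; the $S$-symmetry relations then pin down the remaining blocks. This is entirely mechanical given the block forms already computed, but it is the one place where an error would propagate, so it deserves to be done with care rather than waved through.
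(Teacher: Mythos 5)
Your overall plan --- reduce to a block-matrix computation in a basis compatible with the flag, and read off the dimension --- is the paper's plan too, but there are two genuine problems.

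\textbf{The characteristic-$2$ gap.} You explicitly invoke a Witt decomposition to produce a strongly adapted basis (one in which the top-right block $H$ is totally singular), so that $\Mat_\bfB(b)$ is exactly $K_n$ or $S_{\nu,P}$. As you yourself note, this requires $\car\F\neq 2$ in the symmetric case. But Proposition~\ref{dimspecialprop} is stated, and immediately afterwards emphasized, to hold ``over a totally arbitrary field, possibly of characteristic~$2$.'' The paper therefore works with a merely \emph{adapted} basis, in which $\Mat_\bfB(b)$ is $S_{\nu,P,Q}$ with an extra Gram block $Q$ in the bottom-right corner. The block computation then produces the top-right entry $E-QA^T$ (rather than just a free symmetric $E$), but since $E\mapsto E-QA^T$ is a bijection of $\Mats_\nu(\F)$ (resp.\ $\Mata_\nu(\F)$) onto itself for fixed $A$, the dimension count is unchanged. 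Your argument simply does not cover this case, and the remark in the paper makes clear that it was not intended to be excluded.

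\textbf{The nilpotency argument is circular.} You claim that if $u\in\calS_b\cup\calA_b$ stabilizes $\calF$ then $u$ is automatically nilpotent, arguing that stabilizing the complete flag $(F_i)_{0\le i\le\nu}$ of $F_\nu$ forces $u|_{F_\nu}$ to be \emph{strictly} upper-triangular. It does not: stabilizing a complete flag only forces upper-triangularity, and for instance $\id_V\in\calS_b$ stabilizes every flag when $b$ is symmetric, yet is not nilpotent. The word ``nilpotent'' in the definition of $\WS_{b,\calF}$ is doing real work. The correct chain is: a $b$-symmetric (or $b$-alternating) $u$ that stabilizes $\calF$ also stabilizes $F_\nu^\bot$ and (via Lemma~\ref{nonisotropicLemma} applied to the quotient $F_\nu^\bot/F_\nu$) maps $F_\nu^\bot$ into $F_\nu$, which yields the block-triangular shape with $A(u)$ upper-triangular; then one imposes nilpotency of $u$, which for that shape is equivalent to nilpotency of $A(u)$, i.e.\ $A(u)\in\NT_\nu(\F)$. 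Your ``main obstacle'' paragraph correctly isolates this as the point to check, but the nilpotency paragraph as written asserts something false. Once both slips are repaired --- allow the $Q$ block, and impose rather than derive nilpotency --- the rest of your computation (the $S$-symmetry relations pinning down the blocks, and the dimension bookkeeping) matches the paper's proof.
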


Note here that the underlying field is totally arbitrary, possibly of characteristic $2$.

In order to prove the above result, we will transfer the problem to the matrix setting, and will then solve it with a method
similar to the one from the previous section.
Let us set $p:=n-2\nu$, and say that a basis $(e_1,\dots,e_n)$ of $V$ is \textbf{adapted} to $\calF=(F_0,\dots,F_\nu)$
whenever the following conditions are satisfied:
\begin{itemize}
\item $(e_1,\dots,e_i)$ is a basis of $F_i$ for all $i \in \lcro 0,\nu\rcro$;
\item $e_{\nu+1},\dots,e_{\nu+p}$ are orthogonal to $e_1,\dots,e_\nu,e_{n-\nu+1},\dots,e_n$;
\item $b(e_i,e_{n-\nu+j})=\delta_{i,j}$ for all $(i,j) \in \lcro 1,\nu\rcro^2$.
\end{itemize}
A basis that is adapted to $\calF$ is called \textbf{strongly adapted} to $\calF$
if, in addition to the above requirements, the subspace $\Vect(e_k)_{n-\nu+1 \leq k \leq n}$ is totally $b$-singular.

The existence of an adapted basis is folklore and we quickly recall the main arguments that justify it:
\begin{itemize}
\item One chooses a basis $(e_1,\dots,e_\nu)$ of $F_\nu$ such that $F_i=\Vect(e_1,\dots,e_i)$ for all $i \in \lcro 1,\nu\rcro$.
\item One splits $F_\nu^\bot=F_\nu \oplus G$. Then, $G$ is $b$-regular (better still, $b$ is non-isotropic on $G$), whence $G^\bot$ is $b$-regular.
\item One chooses an arbitrary basis $(g_1,\dots,g_p)$ of $G$.
\item One then splits $G^\bot=F_\nu \oplus H$, so that $\dim H=\dim F$.
As $G^\bot$ is $b$-regular and $F_\nu$ is totally singular, we find a unique basis $(f_1,\dots,f_\nu)$ of $H$
such that $b(e_i,f_j)=\delta_{i,j}$ for all $(i,j)\in \lcro 1,\nu\rcro^2$.
\end{itemize}
The resulting family $(e_1,\dots,e_\nu,g_1,\dots,g_p,f_1,\dots,f_\nu)$ is then a basis of $V$ that is adapted to $\calF$.

Moreover, if $b$ is alternating or if $\F$ does not have characteristic $2$, then there exists a basis that is
strongly adapted to $\calF$: to see this, it suffices to modify the previous construction by choosing $H$, among the direct factors
of $F_\nu$ in $G^\bot$, as a totally $b$-singular subspace (the existence of such a subspace is folklore in the said cases, but can fail
for a symmetric bilinear form over a field with characteristic $2$).

\begin{proof}[Proof of Proposition \ref{dimspecialprop}]
We choose a basis $(e_1,\dots,e_n)$ of $V$ that is adapted to the flag $\calF$, as in the above.
We set $G:=\Vect(e_{\nu+1},\dots,e_{n-\nu})$ and $H:=\Vect(e_{n-\nu+1},\dots,e_n)$, and we see
that $F_\nu^\bot=F_\nu \oplus G$ and $G^\bot=F_\nu\oplus H$ (in both cases, the space on the right-hand side is included in the one on the left-hand side, and the equality of dimensions is clear). The restriction of $b$ to $G\times G$ is equivalent to the bilinear form induced by
$b$ on $F_\nu^\bot/F_\nu$, which is non-isotropic because of the definition of the Witt index.
We denote by $P$ the non-isotropic (necessarily symmetric) matrix of $b_{|G \times G}$ in $(f_1,\dots,f_p)$.

The matrix of $b$ in $\bfB$
reads
$$S_{\nu,P,Q}:=\begin{bmatrix}
0_\nu & [0]_{\nu \times p} & I_\nu \\
[0]_{p \times \nu} & P & [0]_{p \times \nu} \\
\varepsilon\,I_\nu & [0]_{\nu \times p} & Q
\end{bmatrix}$$
for some $Q \in \Mat_\nu(\F)$ (symmetric or alternating), where $\varepsilon=1$ if $b$ is symmetric, and $\varepsilon=-1$
if $b$ is alternating.

Now, let $u \in \calS_b \cup \calA_b$ be nilpotent, and assume that it stabilizes $F_\nu$. Then,
$u$ also stabilizes $F_\nu^{\bot}$ (see Lemma \ref{stableortholemma}), and it follows that it induces a nilpotent endomorphism $\overline{u}$ of the quotient space $F_\nu^\bot/F_\nu$, and $\overline{u}$ is either $\overline{b}$-symmetric or $\overline{b}$-alternating.
By Lemma \ref{nonisotropicLemma}, we deduce that $\overline{u}=0$, i.e.\ $u$ maps $F_\nu^{\bot}$ into $F_\nu$.

Hence, the elements of $\WS_{b,\calF}$ are the nilpotent $b$-symmetric endomorphisms of $V$
that are represented in $\bfB$ by a matrix
of the form
$$M=\begin{bmatrix}
A & C' &  B \\
[0]_{p \times \nu} & 0_p & C \\
0_\nu & [0]_{\nu \times p} & A'
\end{bmatrix}$$
where $A$ is upper-triangular, $C' \in \Mat_{\nu,p}(\F)$, $C \in \Mat_{p,\nu}(\F)$,
$A' \in \Mat_\nu(\F)$ and $B \in \Mat_\nu(\F)$.
A straightforward computation shows that a matrix of the said form is $S_{\nu,P,Q}$-symmetric
if and only if $A'=\varepsilon A^T$, $C'=\epsilon\,(PC)^T$ and there exists
a matrix $E \in \Mats_\nu(\F)$ such that $B=E-QA^T$.
Assuming that $M$ is $S_{\nu,P,Q}$-symmetric, we see that it is nilpotent if and only if $A$ and
$A'$ are nilpotent, which is equivalent to having $A$ nilpotent.
It follows that $\WS_{b,\calF}$ is the set of all endomorphisms of $V$
that are represented in $\bfB$ by a matrix of the form
$$\begin{bmatrix}
A & \epsilon\,(PC)^T & E-QA^T \\
[0]_{p \times \nu} & 0_p & C \\
0_\nu & [0]_{\nu \times p} & \epsilon A^T
\end{bmatrix}$$
for some $A \in \NT_\nu(\F)$, some $C \in \Mat_{p,\nu}(\F)$ and some $E \in \Mats_\nu(\F)$.
Obviously, this is a linear subspace of $\End(V)$ with dimension
$$\dim \NT_\nu(\F)+\dim \Mat_{p,\nu}(\F)+\dim \Mats_\nu(\F)=\nu(n-\nu).$$
Likewise, one proves that
$\WA_{b,\calF}$ is the set of all endomorphisms of $V$
that are represented in $\bfB$ by a matrix of the form
$$\begin{bmatrix}
A & -\epsilon\,(PC)^T & E+QA^T \\
[0]_{p \times \nu} & 0_p & C \\
0_\nu & [0]_{\nu \times p} & -\epsilon A^T
\end{bmatrix}$$
for some $A \in \NT_\nu(\F)$, some $C \in \Mat_{p,\nu}(\F)$ and some $E \in \Mata_\nu(\F)$.
Obviously, this is a linear subspace of $\End(V)$ with dimension
$$\dim \NT_\nu(\F)+\dim \Mat_{p,\nu}(\F)+\dim \Mata_\nu(\F)=\nu(n-\nu-1).$$
\end{proof}

\subsection{Main results: the non-degenerate case}

As we have seen in Section \ref{ProblemSection}, our problem can be reduced to the case when the bilinear form under consideration is
non-degenerate. The following theorem gives a complete answer to our first question in that situation, provided that the underlying field
be of characteristic different from $2$.

\begin{theo}[Main theorem]\label{TheoMajo}
Let $V$ be a finite-dimensional vector space over a field $\F$ with characteristic not $2$, and $b$ be a non-degenerate symmetric or alternating bilinear form on $V$. Denote by $\nu$ the Witt index of $b$, and set $n:=\dim V$.
\begin{enumerate}[(a)]
\item The greatest possible dimension for a nilpotent subspace of $\calS_b$
is $\nu (n-\nu)$.
\item The greatest possible dimension for a nilpotent subspace of $\calA_b$
is $\nu (n-\nu-1)$.
\end{enumerate}
\end{theo}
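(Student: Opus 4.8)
The plan is to establish the upper bounds in Theorem~\ref{TheoMajo} and observe that Proposition~\ref{dimspecialprop} already provides matching examples (a maximal partially complete $b$-singular flag exists since $b$ is non-degenerate, and its associated spaces $\WS_{b,\calF}$, $\WA_{b,\calF}$ have the claimed dimensions), so only the inequalities $\dim\calV\le\nu(n-\nu)$ for $\calV\subset\calS_b$ nilpotent and $\dim\calV\le\nu(n-\nu-1)$ for $\calV\subset\calA_b$ nilpotent remain. First I would set $\epsilon:=1$ if $b$ is symmetric and $\epsilon:=-1$ otherwise, fix a maximal totally $b$-singular subspace $G_0$ (so $\dim G_0=\nu$), and split $V$ using $G_0$ and its orthogonal. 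Since $\car\F\neq 2$ we may choose a totally $b$-singular complement $H_0$ to $G_0^\bot\cap$(a regular core), i.e.\ arrange a decomposition $V=G_0\oplus W\oplus H_0$ where $G_0,H_0$ are totally singular and dually paired, $W=G_0^\bot\cap H_0^\bot$ is non-isotropic of dimension $p:=n-2\nu$, exactly as in the constructions of Section~\ref{matrixExamples}.

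The heart of the argument is a linear-algebra dimension count on the block structure forced by nilpotency. For $u\in\calV$, Lemma~\ref{stableortholemma}(b) gives $\Ker u=(\im u)^\bot$; I would feed this, together with nilpotency, into an analysis of how $u$ interacts with the filtration $G_0\subset G_0^\bot\subset V$. The key claim to extract is that the ``compression'' maps organizing $\calV$ into blocks are constrained enough: writing $u$ in block form along $V=G_0\oplus W\oplus H_0$, the $(H_0\to G_0)$ block lies in $\Mats_\nu$ (resp.\ $\Mata_\nu$) up to a fixed twist, the $W$-diagonal block is a nilpotent $b_{|W}$-symmetric (resp.\ alternating) endomorphism hence $0$ by Lemma~\ref{nonisotropicLemma}, and the remaining freedom is controlled by the standard Gerstenhaber theorem applied to the induced nilpotent space of endomorphisms of $G_0$. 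Concretely, I expect to show $\calV$ injects into a space of matrices of the shape appearing in the proof of Proposition~\ref{dimspecialprop} \emph{but without the upper-triangularity constraint on $A$}; then the map $u\mapsto A\in\End(G_0)$ sends $\calV$ into a nilpotent subspace of $\Mat_\nu(\F)$ (nilpotency of $A$ follows because the $W$-block vanishes and $u$ is nilpotent), so by Gerstenhaber $\dim(\text{image})\le\binom{\nu}{2}$, while the kernel of $u\mapsto A$ consists of endomorphisms with $A=0$, for which the $H_0\to G_0$ block plus the $W$-coupling block account for at most $\dim\Mats_\nu(\F)+\dim\Mat_{p,\nu}(\F)=\binom{\nu+1}{2}+\nu p$ (resp.\ $\binom{\nu}{2}+\nu p$) dimensions. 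Summing gives $\binom{\nu}{2}+\binom{\nu+1}{2}+\nu p=\nu^2+\nu p=\nu(n-\nu)$ in the symmetric case and $2\binom{\nu}{2}+\nu p=\nu(n-\nu-1)$ in the alternating case.

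The step I expect to be the main obstacle is proving that the image of $u\mapsto A$ really is a \emph{nilpotent} subspace of $\Mat_\nu(\F)$, and more basically that $\calV$ can be put in the required block-triangular form at all — that is, that every nilpotent $u\in\calS_b\cup\calA_b$ stabilizing nothing a priori is nevertheless forced, \emph{as a member of a whole nilpotent space}, into compatibility with a single decomposition $V=G_0\oplus W\oplus H_0$. A single nilpotent $u$ need not map $G_0^\bot$ into $G_0$; this only becomes true once one knows $u$ stabilizes $G_0$, which is not automatic. So the genuine work is a simultaneous-triangularization-type result: I would argue that for the dimension bound it suffices to treat subspaces $\calV$ that already stabilize a maximal totally singular $G_0$ (reducing the general case to this by an extremal/induction argument on $\nu$, peeling off an isotropic vector and passing to a subquotient of the form $x^\bot/\F x$ on which $b$ induces a non-degenerate form of Witt index $\nu-1$), and then the block analysis above goes through cleanly. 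Making that reduction airtight — controlling the dimension lost when passing to the subquotient, and handling the degenerate possibility $\nu=0$ (where Lemma~\ref{nonisotropicLemma} forces $\calV=0$, matching $\nu(n-\nu)=0$) — is where the care is needed; the rest is the routine block bookkeeping already rehearsed in Section~\ref{matrixExamples}.
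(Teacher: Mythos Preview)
Your block-decomposition plan is essentially the paper's Section~\ref{DirectGerstenSection} approach, and you have correctly identified its real obstacle: a general nilpotent $\calV\subset\calS_b$ need not stabilize any maximal totally singular $G_0$, so you cannot assume every $u\in\calV$ is block upper-triangular. But your proposed fix does not work as stated. Peeling off an isotropic vector and passing to $x^\bot/\F x$ is not a reduction to the stabilizing case; it is a \emph{separate} inductive proof (the paper's Section~\ref{InductiveSection}), and to close that induction you must control $\dim\calV-\dim(\calV\modu x)$, which requires showing $\dim(\calV x)+\dim L\le n-1$ where $x\otimes_b L$ (resp.\ $x\wedge_b L$) is the space of elements of $\calV$ vanishing on $x$ and mapping $\{x\}^\bot$ into $\F x$. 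That inequality comes from the trace identity $\tr\bigl(u\cdot(x\otimes_b y)\bigr)=0$, giving $b(u(x),y)=0$ and hence $\calV x\perp L$ (Lemmas~\ref{orthotensorsym}--\ref{orthotensoralt}); this is the ingredient your sketch omits.

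The paper repairs the block approach without ever assuming $\calV$ stabilizes $F:=G_0$. It sets $\calU:=\{u\in\calV:u(F)\subset F\}=\Ker\Psi$, where $\Psi:u\mapsto\bigl(x\in F\mapsto [u(x)]\in V/F\bigr)$, and writes $\dim\calV=\rk\Psi+\dim\Ker\Phi+\dim\calV_F$ with $\Phi:\calU\to\End(F)$, $u\mapsto u_{|F}$, and $\calV_F:=\im\Phi$. Gerstenhaber bounds $\dim\calV_F\le\binom{\nu}{2}$ as you say; the point you are missing is that $\rk\Psi$ and $\dim\Ker\Phi$ are bounded \emph{jointly}, not separately. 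For $u\in\calV$ and $v\in\Ker\Phi$ the Trace Orthogonality Lemma gives $\tr(uv)=0$, and unwinding the block structure this makes the relevant block data of $\im\Psi$ and of $\Ker\Phi$ orthogonal for a non-degenerate bilinear form on $\Mat_{p,\nu}(\F)\times\Mats_\nu(\F)$ (resp.\ $\Mata_\nu(\F)$), yielding $\rk\Psi+\dim\Ker\Phi\le\nu p+\binom{\nu+1}{2}$ (resp.\ $\nu p+\binom{\nu}{2}$). Thus the ``non-stabilizing'' part $\rk\Psi$ is not free: it is paid for out of the kernel of your map $u\mapsto A$. Without this trade-off your count overcounts by exactly $\rk\Psi$.
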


We have already seen that the dimension bounds in this theorem are optimal.
The results are substantially different for fields with characteristic $2$
and require different techniques: we will deal with them in a subsequent article.

\vskip 3mm
Next, we give several results on the case of equality:
first of all, there are two situations where our results encompass all fields with characteristic not $2$:

\begin{theo}\label{EqualitySymSymtheo}
Let $b$ be a non-degenerate symmetric bilinear form on an $n$-dimensional vector space $V$, over a field with characteristic not $2$. Denote by $\nu$ the Witt index of $b$, and let $\calV$ be a nilpotent linear subspace of $\calS_b$ with dimension $\nu(n-\nu)$.
Then, there exists a maximal partially complete $b$-singular flag $\calF$ of $V$ such that $\calV=\WS_{b,\calF}$.
\end{theo}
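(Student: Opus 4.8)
The plan is to argue by induction on the Witt index $\nu$ of $b$. If $\nu=0$, then by Lemma~\ref{nonisotropicLemma} we have $\calV=\{0\}$, which equals $\WS_{b,\calF}$ for the one-term flag $\calF=(\{0\})$; so assume $\nu\geq1$ and that the theorem holds for every non-degenerate symmetric bilinear form of Witt index $\nu-1$ over a field of characteristic not $2$. Write $N:=\bigcap_{u\in\calV}\Ker u$. By point~(b) of Lemma~\ref{stableortholemma} and the non-degeneracy of $b$, one has $\sum_{u\in\calV}\im u=N^{\bot}$; in particular every element of $\calV$ annihilates $N$. The core of the proof is to produce a totally $b$-singular line contained in $N$; once that is achieved, a dimension count reduces everything to the inductive hypothesis.

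I would first check that $N$ is totally $b$-singular, so that every line it contains is totally $b$-singular. Assume, for contradiction, that some $x\in N$ has $b(x,x)\neq0$. Then $V=\F x\perp(\F x)^{\bot}$ as an orthogonal direct sum; each $u\in\calV$ stabilizes $\F x$ (it kills $x$), hence also $(\F x)^{\bot}$ by point~(a) of Lemma~\ref{stableortholemma}; and $u\mapsto u_{|(\F x)^{\bot}}$ is injective on $\calV$ (an element killing both $x$ and $(\F x)^{\bot}$ is zero), with image a nilpotent subspace of $\calS_{b'}$, where $b'$ denotes the restriction of $b$ to $(\F x)^{\bot}$ --- a non-degenerate symmetric form of Witt index $\nu'\leq\nu$ on a space of dimension $n-1$. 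Theorem~\ref{TheoMajo} would then force $\nu(n-\nu)=\dim\calV\leq\nu'\bigl((n-1)-\nu'\bigr)$; but $\nu(n-\nu)-\nu'\bigl((n-1)-\nu'\bigr)=(\nu-\nu')(n-\nu-\nu')+\nu'>0$ (as $\nu\geq1$ and $2\nu\leq n$), a contradiction. Hence $b(x,x)=0$ for all $x\in N$, and since $\car\F\neq2$, polarizing gives $b_{|N\times N}=0$.

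The genuinely delicate point, and the one I expect to be the main obstacle, is to prove that $N\neq\{0\}$, i.e.\ that the elements of $\calV$ share a common kernel vector. Here I would pick $u_{0}\in\calV$ of maximal rank, invoke the classical fact that the maximality of $\rk u_{0}$ inside the linear space $\calV$ forces $u(\Ker u_{0})\subseteq\im u_{0}$ for every $u\in\calV$, and then combine this with $\im u_{0}=(\Ker u_{0})^{\bot}$ (point~(b) of Lemma~\ref{stableortholemma}) and the nilpotency of the elements of $\calV$ to isolate a nonzero vector annihilated by all of $\calV$; this is a counting argument of the same flavour as the proof of the dimension bound in Theorem~\ref{TheoMajo}, and can alternatively be carried out on the matrix model introduced in the proof of Proposition~\ref{dimspecialprop}. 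Granting $N\neq\{0\}$, fix a totally $b$-singular line $D\subseteq N$, put $W:=D^{\bot}/D$ equipped with the form $\overline{b}$ induced by $b$ --- non-degenerate, symmetric, of dimension $n-2$ and Witt index $\nu-1$ --- and let $\pi\colon D^{\bot}\to W$ be the canonical projection.

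Each $u\in\calV$ annihilates $D$, hence stabilizes $D^{\bot}$ (point~(a) of Lemma~\ref{stableortholemma}) and induces a nilpotent $\overline{b}$-symmetric endomorphism $\overline{u}$ of $W$; set $\overline{\calV}:=\{\overline{u}\mid u\in\calV\}$. The kernel of the linear map $u\mapsto\overline{u}$ is contained in the space of those $b$-symmetric endomorphisms $u$ of $V$ with $u(D)=\{0\}$ and $u(D^{\bot})\subseteq D$, and a direct computation --- using $\car\F\neq2$ --- shows that this last space is $(n-1)$-dimensional and consists of nilpotent maps; therefore $\dim\overline{\calV}\geq\dim\calV-(n-1)$. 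On the other hand, Theorem~\ref{TheoMajo} applied to $\overline{b}$ gives $\dim\overline{\calV}\leq(\nu-1)\bigl((n-2)-(\nu-1)\bigr)$, and since $(n-1)+(\nu-1)\bigl((n-2)-(\nu-1)\bigr)=\nu(n-\nu)=\dim\calV$, both inequalities are equalities; in particular $\overline{\calV}$ is a nilpotent subspace of $\calS_{\overline{b}}$ of maximal dimension. By the inductive hypothesis there is a maximal partially complete $\overline{b}$-singular flag $\overline{\calF}=(\overline{F}_{0},\dots,\overline{F}_{\nu-1})$ of $W$ with $\overline{\calV}=\WS_{\overline{b},\overline{\calF}}$. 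Now set $F_{0}:=\{0\}$ and $F_{i}:=\pi^{-1}(\overline{F}_{i-1})$ for $1\leq i\leq\nu$; then $\calF:=(F_{0},\dots,F_{\nu})$ is a maximal partially complete $b$-singular flag of $V$ with $F_{1}=D$, since $F_{\nu}$, being the preimage of a maximal totally $\overline{b}$-singular subspace of $W$, is a maximal totally $b$-singular subspace of $V$. For $u\in\calV$ we have $u(F_{1})=u(D)=\{0\}$, while for $i\geq2$, $\pi\bigl(u(F_{i})\bigr)=\overline{u}\bigl(\overline{F}_{i-1}\bigr)\subseteq\overline{F}_{i-2}$ yields $u(F_{i})\subseteq F_{i-1}$; as $u$ is nilpotent and $b$-symmetric, this gives $u\in\WS_{b,\calF}$. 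Hence $\calV\subseteq\WS_{b,\calF}$, and since $\dim\WS_{b,\calF}=\nu(n-\nu)=\dim\calV$ by Proposition~\ref{dimspecialprop}, we conclude $\calV=\WS_{b,\calF}$, completing the induction.
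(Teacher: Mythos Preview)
Your inductive framework --- the base case, the isotropy argument for the common kernel, and the descent to $D^\bot/D$ with the dimension count forcing equality at each stage --- is correct and matches the paper's proof almost line for line. The genuine gap is exactly where you flag it: the claim that $N=\bigcap_{u\in\calV}\Ker u$ is nonzero.

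Your sketch for this step does not go through as stated. The maximal-rank trick does give $u(\Ker u_0)\subseteq\im u_0=(\Ker u_0)^\bot$ for every $u\in\calV$, but that only says each $u$ sends $K:=\Ker u_0$ into $K^\bot$; it does not by itself produce a vector of $K$ killed by \emph{all} of $\calV$, and no ``counting argument of the same flavour as the proof of Theorem~\ref{TheoMajo}'' is visible here. The inductive proof in Section~\ref{InductiveSection} already \emph{assumes} a chosen isotropic vector $x$ and analyses $\calV x$, $\calV\modu x$; it never manufactures a common kernel vector. Likewise the matrix model of Proposition~\ref{dimspecialprop} presupposes a flag that $\calV$ stabilises, which is precisely what you are trying to find. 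So the phrase ``Granting $N\neq\{0\}$'' is hiding the entire difficulty.

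The paper obtains $N\neq\{0\}$ by a completely different mechanism. It first proves (Lemma~\ref{doubleortholemma}) that a nilpotent $v\in\calS_b$ with $\tr(uv)=0$ for all $u\in\calV$ must already lie in $\calV$; this uses the direct proof of Section~\ref{DirectMORSection} together with Lemma~\ref{stableflaglemma}. Since $b$ is symmetric, $\calS_b$ is closed under squaring, so for $u\in\calV$ the element $u^2$ is nilpotent, $b$-symmetric, and trace-orthogonal to $\calV$ by the Trace Orthogonality Lemma; hence $u^2\in\calV$ (Lemma~\ref{stabsym}(a)). Closure under squares gives closure under the Jordan product $uv+vu=(u+v)^2-u^2-v^2$, and Jacobson's triangularisation theorem then yields a common eigenvector, i.e.\ a nonzero vector in $N$ (Proposition~\ref{EqualitySymSymprop}). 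Note that this is exactly the step that fails for $\calS_b$ with $b$ alternating or $\calA_b$ with $b$ symmetric, which is why those cases are left as Conjectures~\ref{EqualityAltSymconj} and~\ref{EqualitySymAltconj}; a purely rank-and-dimension argument of the kind you outline, if it worked, would presumably not distinguish these cases.
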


\begin{theo}\label{EqualityAltAlttheo}
Let $b$ be a non-degenerate alternating bilinear form on an $n$-dimensional vector space $V$, over a field with characteristic not $2$. Denote by $\nu$ the Witt index of $b$, and let $\calV$ be a nilpotent linear subspace of $\calA_b$ with dimension $\nu(n-\nu-1)$.
Then, there exists a maximal partially complete $b$-singular flag $\calF$ of $V$ such that $\calV=\WA_{b,\calF}$.
\end{theo}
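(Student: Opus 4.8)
The plan is to reduce Theorem~\ref{EqualityAltAlttheo} to Theorem~\ref{EqualitySymSymtheo} by means of a suitable correspondence between the alternating and the symmetric situations, rather than redoing the whole classification. Since the characteristic is not $2$, $b$-alternating is equivalent to $b$-skew-symmetric in the sense of satisfying $b(u(x),y)=-\epsilon\,b(x,u(y))$ with $\epsilon=-1$, i.e.\ to $b(u(x),y)=b(x,u(y))$; in other words, with $b$ alternating, $\calA_b$ is the space of endomorphisms $u$ for which $(x,y)\mapsto b(u(x),y)$ is \emph{symmetric}. So the first step is to fix a basis $\bfB$ in which $\Mat_\bfB(b)=K_n$ and translate the problem to matrices via the isomorphism of Section~\ref{matrixExamples}: a nilpotent subspace $\calV\subseteq\calA_b$ of dimension $\nu(n-\nu-1)=\nu(\nu-1)$ becomes a nilpotent subspace $\calW\subseteq\calA_{K_n}$ of the same dimension, where, as computed in the excerpt,
\[
\calA_{K_n}=\Biggl\{\begin{bmatrix} A & C \\ B & A^T\end{bmatrix}\mid A\in\Mat_\nu(\F),\ (B,C)\in\Mata_\nu(\F)^2\Biggr\}.
\]
The target configurations $\WA_\nu$ are exactly those of the form $\begin{bmatrix}N & C\\ 0 & N^T\end{bmatrix}$ with $N\in\NT_\nu(\F)$ and $C\in\Mata_\nu(\F)$, up to conjugation by an element of the isometry group of $K_n$.

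The key idea for the reduction is that alternating matrices in even size carry a square root of a symmetric structure: recall that for $b$ alternating and non-degenerate one has $n=2\nu$, and the natural move is to compare $\calA_{K_n}$ with $\calS_{K_n}$, which by the excerpt is $\bigl\{\begin{bmatrix}A&C\\B&-A^T\end{bmatrix}:(B,C)\in\Mats_\nu(\F)^2\bigr\}$. More promising, though, is to relate the $n=2\nu$ alternating problem to a \emph{symmetric} bilinear form in dimension $n-1=2\nu-1$ of Witt index $\nu-1$: indeed $\nu(n-\nu-1)=\nu(\nu-1)$ while for a symmetric form on $\F^{2\nu-1}$ of Witt index $\nu-1$ the bound of Theorem~\ref{TheoMajo} is $(\nu-1)(2\nu-1-(\nu-1))=(\nu-1)\nu$, the same number. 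So the second step is to build, from a nilpotent $\calV\subseteq\calA_b$ of maximal dimension, a nilpotent $\calV'\subseteq\calS_{b'}$ of maximal dimension for an auxiliary symmetric form $b'$ on a hyperplane or on a $1$-codimensional quotient, in such a way that the flag produced by Theorem~\ref{EqualitySymSymtheo} for $\calV'$ lifts to a maximal $b$-singular flag $\calF$ for which $\calV\subseteq\WA_{b,\calF}$; equality then follows by comparing dimensions via Proposition~\ref{dimspecialprop}(b).

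Concretely, I would proceed as follows. First, show that any nilpotent $\calV\subseteq\calA_b$ of dimension $\nu(\nu-1)$ has a common isotropic vector, or better a common totally singular subspace of dimension $\nu$ in its kernel-type behaviour; the engine here is Lemma~\ref{stableortholemma}, which for $u\in\calA_b$ gives $\Ker u=(\im u)^\bot$ and the stability of orthogonals. Second, pick a common stable totally $b$-singular line $\F v$ (the existence of which is the heart of the matter), set $W:=(\F v)^\bot$, note $\F v\subseteq W$ and that $b$ induces a non-degenerate form on $W/\F v$ which is \emph{symmetric} — wait, it stays alternating — so instead one descends through $v^\bot/\F v$ keeping alternating, and iterates $\nu$ times to trap $\calV$ inside the nilpotent endomorphisms stabilizing a maximal $b$-singular flag $\calF$; an induction on $\nu$ is natural, the base case $\nu\le 1$ being trivial since then $\calA_b$ contains only nilpotents of the described form or is zero. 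Third, once $\calV\subseteq\WA_{b,\calF}$ is established, invoke $\dim\calV=\nu(n-\nu-1)=\dim\WA_{b,\calF}$ from Proposition~\ref{dimspecialprop}(b) to conclude $\calV=\WA_{b,\calF}$. The main obstacle is precisely the existence of a common stable totally singular flag: extracting even one common isotropic stable vector from a merely nilpotent (not nilpotent-algebra) space of the critical dimension is delicate, and I expect this to require a dimension-count argument à la Gerstenhaber — looking at the subspace $\{u(x):u\in\calV\}$ for a well-chosen $x$, using $b$-alternation to force it into $x^\bot$, and playing the maximality of $\dim\calV$ against the standard Gerstenhaber bound on an induced space of endomorphisms of $V/\F x$ or of $x^\bot/\F x$. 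A subtlety to watch throughout is that over a field of characteristic not $2$ a strongly adapted basis exists (as noted after Proposition~\ref{dimspecialprop}), so the target space $\WA_{b,\calF}$ really has the clean block form above with $Q=0$, which is what makes the final matching of dimensions transparent.
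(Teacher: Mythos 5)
Your proposal contains a genuine gap precisely at the step you yourself flag as ``the heart of the matter'': the existence of a common stable isotropic vector for all elements of $\calV$. You first propose to reduce the alternating case to Theorem~\ref{EqualitySymSymtheo} via a numerological coincidence of dimensions, but then correctly observe mid-sentence that passing to $v^\bot/\F v$ keeps the form \emph{alternating}, so no reduction to a symmetric form materializes; you then fall back on the inductive descent, which is the right skeleton, but you leave the crucial input unproven, saying only that you ``expect this to require a dimension-count argument \`a la Gerstenhaber.'' That is not what closes the argument, and nothing in your sketch supplies it.

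What the paper actually uses to produce the common annihilated vector is a triangularizability argument, not a rank-one or dimension-count argument. The key observation is that when $b$ is alternating, $\calA_b$ is stable under taking squares (because then $u\in\calA_b$ satisfies $b(u(x),y)=b(x,u(y))$, whence $b(x,u^2(x))=b(u(x),u(x))=0$). Combining this with the trace-orthogonality lemma (Lemma~\ref{tracecor}) and the double-orthogonality property of spaces of critical dimension (the analogue of Lemma~\ref{doubleortholemma} from the direct Mathes--Omladi\v{c}--Radjavi proof of Section~\ref{DirectMORSection}), one shows that $u^2\in\calV$ for every $u\in\calV$ (Lemma~\ref{stabalt}(a)). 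Since $uv+vu=(u+v)^2-u^2-v^2$, the space $\calV$ is then closed under the Jordan product, and Jacobson's triangularization theorem (Proposition~\ref{EqualityAltAltprop}) yields a nonzero vector $x$ annihilated by every $u\in\calV$. Because $b$ is alternating, $x$ is automatically isotropic, so the induction on $\nu$ through $\{x\}^\bot/\F x$ proceeds exactly as you describe, and equality with $\WA_{b,\calF}$ follows from Proposition~\ref{dimspecialprop}(b). Without the stability-under-squares-plus-Jacobson chain, your proof does not go through.

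One further inaccuracy worth flagging: you assert that for $b$ alternating, $\calA_b$ consists of the $u$ for which $(x,y)\mapsto b(u(x),y)$ is \emph{symmetric}. This is incorrect; that form is in fact alternating. Indeed $u\in\calA_b$ gives $b(u(x),y)=b(x,u(y))$, so $b(u(y),x)=-b(x,u(y))=-b(u(x),y)$ and $b(u(x),x)=b(x,u(x))=0$. The misidentification is not load-bearing in your sketch, but it suggests the proposed reduction to the symmetric case would not have worked even on the formal level.
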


Our proof of the above two theorems cannot be adapted to the other cases in the structured Gerstenhaber problem:
indeed, one key point that we will use is that $\calS_b$ is stable under squares if $b$ is symmetric,
and so is $\calA_b$ if $b$ is alternating. However, $\calS_b$ is in general unstable under squares
if $b$ is alternating, and $\calA_b$ is in general unstable under squares if $b$ is symmetric.
For those two cases, we conjecture that the corresponding results hold.

\begin{conj}\label{EqualityAltSymconj}
Let $b$ be a non-degenerate alternating bilinear form on an $n$-dimensional vector space $V$, over a field with characteristic not $2$.
Set $\nu:=\frac{n}{2}$, and let $\calV$ be a nilpotent linear subspace of $\calS_b$ with dimension $\nu(n-\nu)$.
Then, there exists a maximal partially complete $b$-singular flag $\calF$ of $V$ such that $\calV=\WS_{b,\calF}$.
\end{conj}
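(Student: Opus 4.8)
The plan is to prove the statement by induction on $\nu$ (equivalently on $n=2\nu$, as $b$ is non-degenerate alternating), reducing at each step to a symplectic space of dimension $n-2$; the engine of the reduction is a line killed by every element of $\calV$. So let $\calV$ be a nilpotent subspace of $\calS_b$ with $\dim\calV=\nu(n-\nu)=\nu^2$.

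\emph{Base case and reduction step.} If $\nu\leq 1$ the result is immediate: for $\nu=1$, $\calV=\F u$ for some nonzero nilpotent $b$-symmetric $u$ on a plane, $\Ker u$ is a line $L$ (automatically totally $b$-singular), and both $\calV$ and $\WS_{b,(\{0\},L)}$ have dimension $1$ by Proposition~\ref{dimspecialprop}(a), hence coincide. Assume $\nu\geq 2$ and suppose we have found a line $L$ with $L\subseteq\bigcap_{u\in\calV}\Ker u$. Being a line, $L$ is totally $b$-singular, and by Lemma~\ref{stableortholemma}(a) every $u\in\calV$ stabilizes $L^\bot$; since $u$ vanishes on $L$ and stabilizes $L^\bot$, it induces a nilpotent endomorphism $\overline u$ of $W:=L^\bot/L$, which carries the non-degenerate alternating form $\overline b$ induced by $b$ (here $\dim W=n-2$ and the Witt index of $\overline b$ is $\nu-1$), and $\overline u$ is $\overline b$-symmetric. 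Hence $\overline\calV:=\{\overline u:u\in\calV\}$ is a nilpotent subspace of $\calS_{\overline b}$.

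\emph{Dimension count and conclusion.} Write $\calK:=\{u\in\calV:u(L^\bot)\subseteq L\}$ for the kernel of the surjection $u\mapsto\overline u$. A short computation in a symplectic basis whose first vector spans $L$ shows that the space of all $b$-symmetric endomorphisms that vanish on $L$ and map $L^\bot$ into $L$ has dimension $2\nu-1$; since every element of $\calK$ has these properties, $\dim\calK\leq 2\nu-1$. On the other hand Theorem~\ref{TheoMajo}(a) applied to $(W,\overline b)$ gives $\dim\overline\calV\leq(\nu-1)\bigl((n-2)-(\nu-1)\bigr)=(\nu-1)^2$, whence $\dim\calK=\dim\calV-\dim\overline\calV\geq\nu^2-(\nu-1)^2=2\nu-1$. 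Therefore $\dim\calK=2\nu-1$ and $\dim\overline\calV=(\nu-1)^2$, so $\overline\calV$ is a nilpotent subspace of $\calS_{\overline b}$ of maximal dimension. By the induction hypothesis there is a maximal partially complete $\overline b$-singular flag $\overline\calF=(\overline F_0,\dots,\overline F_{\nu-1})$ of $W$ with $\overline\calV=\WS_{\overline b,\overline\calF}$. Letting $\pi:L^\bot\to W$ be the canonical projection, put $F_0:=\{0\}$ and $F_{i+1}:=\pi^{-1}(\overline F_i)$ for $0\leq i\leq\nu-1$; then $\calF:=(F_0,\dots,F_\nu)$ is a maximal partially complete $b$-singular flag of $V$ (one has $F_1=L$, and $F_\nu=\pi^{-1}(\overline F_{\nu-1})$ is totally $b$-singular because $\overline F_{\nu-1}$ is totally $\overline b$-singular). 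Finally, every $u\in\calV$ stabilizes $\calF$: indeed $u(F_1)=u(L)=\{0\}$, and for $i\geq 1$ the inclusion $\overline u(\overline F_i)\subseteq\overline F_{i-1}$ yields $u(F_{i+1})\subseteq\pi^{-1}(\overline F_{i-1})=F_i$. Thus $\calV\subseteq\WS_{b,\calF}$, and since $\dim\WS_{b,\calF}=\nu(n-\nu)=\dim\calV$ by Proposition~\ref{dimspecialprop}(a), we conclude $\calV=\WS_{b,\calF}$.

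\emph{The main obstacle.} What remains, and what I expect to be the genuinely hard point, is to produce the line $L$, i.e.\ to prove that $\bigcap_{u\in\calV}\Ker u\neq\{0\}$ — equivalently $\sum_{u\in\calV}\im u\neq V$ — for a maximal nilpotent subspace $\calV$ of $\calS_b$. In the cases settled by Theorems~\ref{EqualitySymSymtheo} and~\ref{EqualityAltAlttheo}, the analogous fact is obtained from the stability of the ambient space under $u\mapsto u^2$, which upgrades the extremal subspace to a Jordan-type structure; that tool fails here, since $u^2\in\calA_b$ (not $\calS_b$) when $u\in\calS_b$ and $b$ is alternating, and more generally the anticommutators $uv+vu$ ($u,v\in\calV$) lie in $\calA_b$ but need not be nilpotent, so they do not directly assemble into a nilpotent subspace of $\calA_b$ on which Theorem~\ref{EqualityAltAlttheo} could be invoked. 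Two routes look plausible. The first, in the spirit of the rest of the article, is to revisit the proof of the dimension bound Theorem~\ref{TheoMajo}(a): choosing $u_0\in\calV$ of maximal rank, one has the Gerstenhaber-type relation $v(\Ker u_0)\subseteq\im u_0$ for all $v\in\calV$, and, using $\Ker u_0=(\im u_0)^\bot$ together with the ensuing block decomposition of $V$, one would try to show, by descent on the rank of $u_0$, that the equality $\dim\calV=\nu^2$ forces the common kernel to be nonzero. The second route is to extend scalars: a non-degenerate alternating form is split, so $\calS_b\otimes\overline\F\cong\mathfrak{sp}_{2\nu}(\overline\F)$, the base change $\calV\otimes\overline\F$ is again a maximal nilpotent subspace, and the classification of such subspaces in reductive Lie algebras (\cite{DraismaKraftKuttler}, which covers type $\mathrm C$ in characteristic $\neq 2$) identifies it with the nilradical of a Borel subalgebra; that Borel, being the normalizer of its nilradical, is Galois-stable, hence defined over $\F$, and unwinding its standard description as the stabilizer of a complete totally $b$-singular $\F$-flag $\calF$ gives $\calV=\WS_{b,\calF}$ at once. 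The drawback of the second route is its reliance on algebraic-group machinery that the present article deliberately avoids, so a self-contained proof that $\bigcap_{u\in\calV}\Ker u\neq\{0\}$ would be the preferred way to settle the conjecture.
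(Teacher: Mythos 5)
The statement you are addressing is labeled Conjecture~\ref{EqualityAltSymconj} in the paper and is \emph{not proved there}: the authors explicitly defer it to a subsequent article, remarking that the mechanism behind Theorems~\ref{EqualitySymSymtheo} and~\ref{EqualityAltAlttheo} (stability of the ambient space under $u\mapsto u^2$, which via the Jordan product feeds Jacobson's triangularization theorem) fails precisely in this case because $u^2$ lies in $\calA_b$, not $\calS_b$, when $b$ is alternating. Your diagnosis of the obstruction is therefore exactly right, and is the same diagnosis the paper makes.

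Your reduction step is correct and is in substance the one the paper itself uses in the proofs of Theorems~\ref{EqualitySymSymtheo} and~\ref{EqualityAltAlttheo}: granted a nonzero vector $x$ annihilated by all of $\calV$ (automatically isotropic since $b$ is alternating), one passes to $\{x\}^\bot/\F x$, uses Proposition~\ref{caractensors} together with the injectivity of $y\mapsto x\otimes_b y$ to bound the kernel of $u\mapsto\overline u$ by $2\nu-1$, invokes Theorem~\ref{TheoMajo} on the quotient to force $\dim\overline\calV=(\nu-1)^2$, applies the induction hypothesis, and lifts the resulting flag. That part is sound, down to the dimension counts.

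But the existence of the common kernel vector is not a loose end to be supplied later --- it \emph{is} the conjecture, and you have not proved it. Your first route (descent on the rank of a maximal-rank element of $\calV$) is only a heuristic and is not carried out; note in particular that the Gerstenhaber-type inclusion $v(\Ker u_0)\subseteq\im u_0$ itself already requires a cardinality hypothesis on $\F$ in the usual proofs. Your second route (base change to $\overline\F$, invoke Draisma--Kraft--Kuttler, then Galois descent) has a substantive defect beyond the stylistic one you mention: over a small finite field the base-changed space $\calV\otimes_\F\overline\F$ need not consist of nilpotent matrices, because the characteristic coefficients $c_1,\dots,c_n$ may vanish on all $\F$-rational points of $\calV$ without vanishing identically when $|\F|$ is small relative to $n$. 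So even with the algebraic-group machinery in hand, that route would at best establish the conjecture over infinite or sufficiently large fields --- which is, not coincidentally, the restricted form the authors announce they will prove in a later article. In short: the proposal is a correct reduction matching the paper's method in the resolved cases, plus a correct identification of the missing lemma, but it does not establish the statement, and the paper does not establish it either.
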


\begin{conj}\label{EqualitySymAltconj}
Let $b$ be a non-degenerate symmetric bilinear form on an $n$-dimensional vector space $V$, over a field with characteristic not $2$.
Denote by $\nu$ the Witt index of $b$, and let $\calV$ be a nilpotent linear subspace of $\calA_b$ with dimension $\nu(n-\nu-1)$.
Then, there exists a maximal partially complete $b$-singular flag $\calF$ of $V$ such that $\calV=\WA_{b,\calF}$.
\end{conj}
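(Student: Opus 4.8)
The plan would be to argue by induction on the Witt index $\nu$, pivoting on a common singular vector. If $\nu=0$, then $b$ is non-isotropic, so Lemma~\ref{nonisotropicLemma} forces $\calV=\{0\}=\WA_{b,\calF}$ for the trivial maximal flag $(\{0\})$; so assume $\nu\geq 1$ and that the statement is known for all smaller Witt indices over every field of characteristic not $2$. The decisive — and, I expect, hardest — preliminary step is to show that a nilpotent subspace $\calV\subseteq\calA_b$ with $\dim\calV=\nu(n-\nu-1)$ has a nonzero \emph{common kernel vector}, i.e.\ that $W:=\bigcap_{u\in\calV}\Ker u$ is nonzero. Once this is granted, $W$ must be totally $b$-singular: if some $x\in W$ satisfied $b(x,x)\neq 0$, then $\F x$ would be $b$-regular, every $u\in\calV$ would stabilize $(\F x)^\bot$ by Lemma~\ref{stableortholemma}(a), and the restriction map $u\mapsto u_{|(\F x)^\bot}$ would embed $\calV$ into the space of nilpotent $b'$-alternating endomorphisms of the $(n-1)$-dimensional space $(\F x)^\bot$, where $b'$ is non-degenerate of Witt index at most $\nu$ (indeed at most $\nu-1$ when $n=2\nu$, since $b'$ then has odd dimension); a short monotonicity check on the function $t\mapsto t(n-t-2)$, combined with Theorem~\ref{TheoMajo}(b), then contradicts $\dim\calV=\nu(n-\nu-1)$.

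Next I would pick a nonzero (necessarily isotropic) vector $x\in W$, complete it to a hyperbolic plane $P=\F x\oplus\F y$ with $b(x,y)=1$ and $b(y,y)=0$, write $V=P\perp P^\bot$, and let $\widetilde b$ be the non-degenerate symmetric form induced by $b$ on $x^\bot/\F x$; one checks $x^\bot=\F x\oplus P^\bot$, so $\widetilde b$ is isometric to $b_{|P^\bot}$ and has Witt index $\nu-1$. Since every $u\in\calV$ kills $x$, it stabilizes $x^\bot$ (Lemma~\ref{stableortholemma}(a)) and induces a nilpotent $\widetilde b$-alternating endomorphism of $x^\bot/\F x$; this gives a linear map $\Phi:\calV\to\calA_{\widetilde b}$ whose image $\widetilde\calV$ is a nilpotent subspace and whose kernel is $\calV\cap\calP$, where $\calP$ denotes the space of $u\in\calA_b$ with $u(x)=0$ and $u(x^\bot)\subseteq\F x$. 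A direct computation, forced entirely by the adjunction identity $b(u(v),w)=-b(v,u(w))$, shows that such a $u$ is completely determined by the linear form $u_{|P^\bot}:P^\bot\to\F x$ (its value on $y$ being then prescribed, and all components of $u(y)$ along $\F x$ and $\F y$ vanishing), so $\dim\calP=n-2$. Combining $\dim\calV=\dim(\calV\cap\calP)+\dim\widetilde\calV$ with $\dim\widetilde\calV\leq(\nu-1)(n-\nu-2)$ from Theorem~\ref{TheoMajo}(b) and the identity $(n-2)+(\nu-1)(n-\nu-2)=\nu(n-\nu-1)$, one concludes that $\calP\subseteq\calV$ and that $\widetilde\calV$ has the maximal dimension $(\nu-1)(n-\nu-2)$.

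I would then invoke the inductive hypothesis: there is a maximal partially complete $\widetilde b$-singular flag $(\widetilde F_0,\dots,\widetilde F_{\nu-1})$ of $x^\bot/\F x$ with $\widetilde\calV=\WA_{\widetilde b,(\widetilde F_i)}$. Writing $\pi:x^\bot\to x^\bot/\F x$ for the canonical projection, set $F_0:=\{0\}$ and $F_i:=\pi^{-1}(\widetilde F_{i-1})$ for $i\in\lcro 1,\nu\rcro$; then $F_1=\F x$, $\dim F_i=i$, and $F_\nu$ is totally $b$-singular because $b(v,v')=\widetilde b(\pi(v),\pi(v'))$ for all $v,v'\in x^\bot$, so $\calF:=(F_0,\dots,F_\nu)$ is a maximal partially complete $b$-singular flag of $V$. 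For $u\in\calV$: $u$ kills $F_1$, stabilizes $x^\bot$, and induces $\Phi(u)\in\widetilde\calV$, which stabilizes each $\widetilde F_{i-1}$; hence $u(F_i)=u\bigl(\pi^{-1}(\widetilde F_{i-1})\bigr)\subseteq\pi^{-1}(\widetilde F_{i-1})=F_i$, so $u\in\WA_{b,\calF}$. Thus $\calV\subseteq\WA_{b,\calF}$, and equality follows since both spaces have dimension $\nu(n-\nu-1)$ by Proposition~\ref{dimspecialprop}(b). (The conclusion $\calP\subseteq\calV$ obtained above is then merely a consistency check: one verifies directly that $\calP\subseteq\WA_{b,\calF}$, since any $u\in\calP$ maps $x^\bot\supseteq F_\nu\supseteq F_i$ into $\F x=F_1\subseteq F_i$ and kills $F_1$.)

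The main obstacle is the very first step: producing a nonzero common kernel vector for a maximal-dimensional nilpotent subspace of $\calA_b$. In the two cases already handled ($\calS_b$ with $b$ symmetric, $\calA_b$ with $b$ alternating) the ambient space of structured endomorphisms is stable under squaring, which puts a Jordan-type structure at one's disposal and yields such a vector through an Engel/Fitting-style argument; for $\calA_b=\mathfrak{so}(b)$ no such structure is available, and $\calV$ need not be a Lie subalgebra, so this step would have to be reached by other means — for instance by upgrading the inequality of Theorem~\ref{TheoMajo}(b) to a \emph{defect} statement ($\bigcap_{u\in\calV}\Ker u=\{0\}$ implies $\dim\calV<\nu(n-\nu-1)$), or by a Galois-descent argument from the algebraic closure built on the known classification of maximal nilpotent subspaces of $\mathfrak{so}_n$ over algebraically closed fields. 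This is presumably the reason the statement is recorded here only as a conjecture.
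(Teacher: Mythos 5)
Note first that the paper does not prove this statement: it is recorded as Conjecture~\ref{EqualitySymAltconj}, and the author explains explicitly that the triangularization mechanism behind Theorems~\ref{EqualitySymSymtheo} and~\ref{EqualityAltAlttheo} is unavailable here, because $\calA_b$ is not stable under squaring when $b$ is symmetric, so there is no Jordan product on $\calV$ and no access to Jacobson's theorem. There is therefore no proof in the paper to compare against, and you were right to flag this.

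That said, your reduction is correct and mirrors faithfully the architecture of the paper's proof of Theorem~\ref{EqualitySymSymtheo}. Granting a nonzero common kernel vector $x$: your isotropy argument goes through (restricting to $(\F x)^\bot$ gives an injection into nilpotent $b'$-alternating endomorphisms, and $\nu'(n-\nu'-2)<\nu(n-\nu-1)$ for $\nu'\leq\nu$ by Theorem~\ref{TheoMajo}(b)); the identification $\calP = x\wedge_b\{x\}^\bot$ with $\dim\calP = n-2$ is Proposition~\ref{caractensors}; the count $(n-2)+(\nu-1)(n-\nu-2)=\nu(n-\nu-1)$ forces $\calP\subseteq\calV$ and $\dim\widetilde\calV=(\nu-1)(n-\nu-2)$; and the pullback of the flag from $x^\bot/\F x$ is routine. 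You have correctly isolated the single genuine gap, which is precisely the step this paper cannot supply: showing that a nilpotent subspace of $\calA_b$ of the critical dimension $\nu(n-\nu-1)$ annihilates a nonzero vector. In the two settled cases this is delivered by stability under squaring, hence a Jordan product, hence Jacobson triangularization; for $\calA_b$ with $b$ symmetric that route is closed, and the paper announces a proof of the conjecture only in a subsequent article and only under a cardinality hypothesis on the field. Your suggested workarounds (a ``defect'' sharpening of Theorem~\ref{TheoMajo}(b), or descent from the algebraically closed case) are sensible directions, but as it stands you have produced a clean conditional reduction, not a proof.
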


We also conjecture that Theorem \ref{EqualityAltAlttheo} can be generalized to all fields with characteristic $2$.

We will not tackle the above two conjectures here, but we can already announce that they hold under the additional assumption that the underlying field be of large enough cardinality with respect to the Witt index of $b$. The proofs will be given in a subsequent article.

\vskip 3mm
At this point, it should be noted that, among the above results, some were already known prior to our study but in very special cases only.
First of all, the dimensions bounds from Theorem \ref{TheoMajo}
were first found by Meshulam and Radwan \cite{MeshulamRadwan} in the special case of the field of complex numbers for a symmetric form: Meshulam and Radwan consider the standard bilinear form $b : (X,Y) \mapsto X^TY$ on $\C^n$,
in which case $\calS_b$ and $\calA_b$ correspond to the matrix spaces $\Mats_n(\C)$ and $\Mata_n(\C)$, respectively.
Their proofs can easily be generalized to an algebraically closed field with characteristic not $2$ for the former, and
an algebraically closed field with characteristic $0$ for the latter.
The optimal upper bound was recently rediscovered by Bukov\v{s}ek and Omladi\v{c} \cite{BukovsekOmladic} for
symmetric complex matrices: they obtained Theorem \ref{EqualitySymSymtheo} in the special case of the field of complex numbers, but their
proof can be generalized to an arbitrary algebraically closed field with characteristic not $2$; they rediscovered some ideas
of Meshulam and Radwan \cite{MeshulamRadwan} and mixed them with an adaptation of the acclaimed proof of Gerstenhaber's theorem by Mathes, Omladi\v{c} and Rajdavi \cite{Mathes}, which connects it to the famous Jacobson triangularization theorem
\cite{Jacobson,Radjavi} for sets of nilpotent endomorphisms.

On the other hand, when $b$ has maximal Witt index among the non-degenerate forms (i.e. $\nu=\lfloor \frac{n}{2}\rfloor$)
and the underlying field is algebraically closed, the dimension bounds from Theorem \ref{TheoMajo} are known, for alternating forms in point (a), and for symmetric forms in point (b), as a special case of the dimension bound obtained by Draisma, Kraft, and Kuttler for subspaces of nilpotent
elements of a reductive Lie algebra \cite{DraismaKraftKuttler}.

\subsection{Main results: the general case}

Combining the results of the preceding section with the standard Gerstenhaber theorem,
we can give a full answer to the dimension bound problem in the structured Gerstenhaber theorem, as well as
the classification of spaces of maximal dimension in the situations that correspond to Theorems \ref{EqualitySymSymtheo} and Theorem \ref{EqualityAltAlttheo}.
We simply state the results: their proofs are mostly straightforward by using the above theorems, Gerstenhaber's theorem
and the reduction technique that is discussed in the end of Section \ref{ProblemSection}.
Note to this end that if $r$ denotes the rank of $b$, then the dimension of the radical of $b$ equals $n-r$, and the Witt index of the reduced non-degenerate form $\overline{b}$ equals $\nu-n+r$.

\begin{theo}
\label{TheoMajoGeneral}
Let $V$ be a finite-dimensional vector space over a field with characteristic not $2$, and $b$ be a symmetric or alternating bilinear form
on $V$. Denote by $r$ the rank of $b$, by $\nu$ the Witt index of $b$, and set $n:=\dim V$.
\begin{enumerate}[(a)]
\item The greatest possible dimension for a nilpotent linear subspace of $\calS_b$
is $\dbinom{n-r}{2}+r(n-r)+(\nu-n+r) (n-\nu)$.
\item The greatest possible dimension for a nilpotent linear subspace of $\calA_b$
is $\dbinom{n-r}{2}+r(n-r)+(\nu-n+r) (n-\nu-1)$.
\end{enumerate}
\end{theo}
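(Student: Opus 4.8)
The plan is to deduce Theorem~\ref{TheoMajoGeneral} from Theorem~\ref{TheoMajo}, from Gerstenhaber's theorem, and from the reduction set up at the end of Section~\ref{ProblemSection}, exactly as announced there. Write $\nu':=\nu-n+r$ for the Witt index of the induced non-degenerate form $\overline{b}$ on $V/\Rad(b)$, and recall that $\dim(V/\Rad(b))=r$ while $\dim\Rad(b)=n-r$. A one-line computation gives $\nu'(r-\nu')=(\nu-n+r)(n-\nu)$ and $\nu'(r-\nu'-1)=(\nu-n+r)(n-\nu-1)$, which are the two numbers occurring in the statement.

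\emph{Upper bound in (a).} Let $\calV$ be a nilpotent linear subspace of $\calS_b$ and consider the surjective linear map $R_s\colon\calS_b\to\End(\Rad(b))\times\calS_{\overline{b}}$. Then $\dim\calV=\dim(\calV\cap\Ker R_s)+\dim R_s(\calV)$. Since $\Ker R_s$ is isomorphic to $\calL(V/\Rad(b),\Rad(b))$, the first term is at most $r(n-r)$. For the second term, denote by $\pi_1,\pi_2$ the two projections of $\End(\Rad(b))\times\calS_{\overline{b}}$; as $R_s(\calV)$ embeds into $\pi_1(R_s(\calV))\times\pi_2(R_s(\calV))$, we get $\dim R_s(\calV)\le\dim\pi_1(R_s(\calV))+\dim\pi_2(R_s(\calV))$. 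Every element of $\calV$ is nilpotent and stabilizes $\Rad(b)$, so $\pi_1(R_s(\calV))$ is a nilpotent subspace of $\End(\Rad(b))$ and Gerstenhaber's theorem gives $\dim\pi_1(R_s(\calV))\le\binom{n-r}{2}$; likewise $\pi_2(R_s(\calV))$ is a nilpotent subspace of $\calS_{\overline{b}}$, and Theorem~\ref{TheoMajo}(a) gives $\dim\pi_2(R_s(\calV))\le\nu'(r-\nu')=(\nu-n+r)(n-\nu)$. Summing the three bounds yields the inequality in (a).

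\emph{Sharpness in (a).} Fix a complete flag of $\Rad(b)$ and let $\calV_1\subset\End(\Rad(b))$ be the space of nilpotent endomorphisms stabilizing it, which is isomorphic to $\NT_{n-r}(\F)$ and hence has dimension $\binom{n-r}{2}$. Fix a maximal partially complete $\overline{b}$-singular flag $\overline{\calF}$ of $V/\Rad(b)$ and set $\calV_2:=\WS_{\overline{b},\overline{\calF}}$, which by Proposition~\ref{dimspecialprop}(a) is a nilpotent subspace of $\calS_{\overline{b}}$ of dimension $\nu'(r-\nu')$. Then $\calV:=R_s^{-1}(\calV_1\times\calV_2)$ is a linear subspace of $\calS_b$; an endomorphism $u\in\calV$ is nilpotent because both $u_{|\Rad(b)}\in\calV_1$ and $\overline{u}\in\calV_2$ are nilpotent (see the end of Section~\ref{ProblemSection}); and since $R_s$ is onto with $\Ker R_s\subset\calV$, we find $\dim\calV=r(n-r)+\binom{n-r}{2}+(\nu-n+r)(n-\nu)$, which is the claimed maximum.

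\emph{Point (b).} One repeats the two previous paragraphs verbatim with $R_s$, $\calS_{\overline{b}}$, Theorem~\ref{TheoMajo}(a) and Proposition~\ref{dimspecialprop}(a) replaced respectively by $R_a$, $\calA_{\overline{b}}$, Theorem~\ref{TheoMajo}(b) and Proposition~\ref{dimspecialprop}(b). The kernel of $R_a$ coincides with that of $R_s$, so the term $r(n-r)$ is unchanged, and the bound from Theorem~\ref{TheoMajo}(b) applied to $\overline{b}$ reads $\nu'(r-\nu'-1)=(\nu-n+r)(n-\nu-1)$. There is no genuine obstacle here: all the content sits in Theorem~\ref{TheoMajo} and in the reduction lemma of Section~\ref{ProblemSection}; the only points deserving care are the identity $\nu'=\nu-n+r$ and the elementary remark that a subspace of a direct product of two vector spaces has dimension at most the sum of the dimensions of its two projections.
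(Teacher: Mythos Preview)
Your argument is correct and follows precisely the route the paper indicates: reduce to the non-degenerate case via the maps $R_s$ and $R_a$ of Section~\ref{ProblemSection}, apply Gerstenhaber's theorem on $\End(\Rad(b))$ and Theorem~\ref{TheoMajo} on $\calS_{\overline{b}}$ (resp.\ $\calA_{\overline{b}}$), and exhibit sharpness by pulling back $\calV_1\times\calV_2$ through the surjection. The paper does not spell out these details, so your write-up is in fact more explicit than the original, but the approach is the same.
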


\begin{theo}\label{EqualitySymSymtheoGeneral}
Let $b$ be a symmetric bilinear form on an $n$-dimensional vector space $V$, over a field with characteristic not $2$. Denote by $\nu$ the Witt index of $b$, by $r$ its rank, and let $\calV$ be a nilpotent linear subspace of $\calS_b$ with dimension
$\dbinom{n-r}{2}+r(n-r)+(\nu-n+r) (n-\nu)$.
Then, there exists a maximal partially complete $b$-singular flag $\calF=(F_0,\dots,F_\nu)$ of $V$ such that
$F_{n-r}=\Rad(b)$ and $\calV$ be the set of all nilpotent $b$-symmetric endomorphisms of $V$ that stabilize $\calF$.
\end{theo}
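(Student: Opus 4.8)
The plan is to reduce to the non-degenerate case via the machinery set up at the end of Section~\ref{ProblemSection}. Write $q\colon V\to V/\Rad(b)$ for the canonical projection and recall the surjective linear map $R_s\colon \calS_b\to \End(\Rad(b))\times\calS_{\overline b}$, $u\mapsto (u_{|\Rad(b)},\overline u)$, whose kernel is isomorphic to $\calL(V/\Rad(b),\Rad(b))$ and so has dimension $r(n-r)$. Put $\calV_1:=\{u_{|\Rad(b)}:u\in\calV\}$ and $\calV_2:=\{\overline u:u\in\calV\}$; these are nilpotent linear subspaces of $\End(\Rad(b))$ and of $\calS_{\overline b}$ respectively, and since $R_s(\calV)\subset \calV_1\times\calV_2$ we get $\calV\subset R_s^{-1}(\calV_1\times\calV_2)$, hence
\[
\dim\calV\le \dim\Ker R_s+\dim\calV_1+\dim\calV_2\le r(n-r)+\dbinom{n-r}{2}+(\nu-n+r)(n-\nu),
\]
where the last inequality uses Gerstenhaber's theorem for $\calV_1$ (a nilpotent subspace of $\End(\Rad(b))$, $\dim\Rad(b)=n-r$) and Theorem~\ref{TheoMajo}(a) for $\calV_2$ (a nilpotent subspace of $\calS_{\overline b}$, with $\overline b$ non-degenerate symmetric of Witt index $\nu-n+r$ on a space of dimension $r$). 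As $\dim\calV$ equals the right-hand side by hypothesis, all inequalities are equalities; in particular $\calV=R_s^{-1}(\calV_1\times\calV_2)$, $\dim\calV_1=\dbinom{n-r}{2}$ and $\dim\calV_2=(\nu-n+r)(n-\nu)$.

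Next I would invoke the two equality cases. By Gerstenhaber's theorem there is a complete flag $\calG=(G_0,\dots,G_{n-r})$ of $\Rad(b)$ such that $\calV_1$ is exactly the space of nilpotent endomorphisms of $\Rad(b)$ stabilizing $\calG$. By Theorem~\ref{EqualitySymSymtheo} applied to the non-degenerate symmetric form $\overline b$ (this is where $\car\F\neq2$ is needed) there is a maximal partially complete $\overline b$-singular flag $\overline\calF=(\overline F_0,\dots,\overline F_{\nu-n+r})$ of $V/\Rad(b)$ with $\calV_2=\WS_{\overline b,\overline\calF}$.

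Then I would assemble these into one flag of $V$: set $F_i:=G_i$ for $0\le i\le n-r$ and $F_{n-r+j}:=q^{-1}(\overline F_j)$ for $0\le j\le \nu-n+r$. Since $q^{-1}(\overline F_0)=\Rad(b)=G_{n-r}$, this defines a strictly increasing chain $\calF=(F_0,\dots,F_\nu)$ with $\dim F_i=i$ for all $i$ and $F_{n-r}=\Rad(b)$; moreover $F_\nu=q^{-1}(\overline F_{\nu-n+r})$ is totally $b$-singular (because $b(x,y)=\overline b(\overline x,\overline y)=0$ for $x,y\in F_\nu$) and has dimension $\nu$, so $\calF$ is a maximal partially complete $b$-singular flag of $V$. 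It then remains to identify $R_s^{-1}(\calV_1\times\calV_2)$ with the set of nilpotent $b$-symmetric endomorphisms that stabilize $\calF$. For this one uses that every $u\in\calS_b$ stabilizes $\Rad(b)$ and hence descends to $\overline u$; that for such $u$, stabilizing $F_i$ for $i\le n-r$ is equivalent to $u_{|\Rad(b)}$ stabilizing $\calG$, while stabilizing $F_{n-r+j}$ is equivalent to $\overline u$ stabilizing $\overline\calF$ (apply $q$ in one direction, take preimages in the other); and that any element of $R_s^{-1}(\calV_1\times\calV_2)$ is automatically nilpotent since both $u_{|\Rad(b)}$ and $\overline u$ are. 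Combining these gives
\[
\calV=R_s^{-1}(\calV_1\times\calV_2)=\bigl\{u\in\calS_b:\ u\ \text{nilpotent},\ u(F_i)\subset F_i\ \text{for all }i\bigr\},
\]
which is the desired conclusion.

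All computations here are routine; the only point requiring genuine care is the verification that, for a $b$-symmetric $u$, stabilization of the concatenated flag $\calF$ is equivalent to the pair of conditions ``$u_{|\Rad(b)}$ stabilizes $\calG$'' and ``$\overline u$ stabilizes $\overline\calF$'' — in particular that nothing is lost or gained at the junction index $i=n-r$. That bookkeeping is the main (and fairly mild) obstacle.
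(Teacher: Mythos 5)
Your proof is correct and follows exactly the route the paper indicates (the paper does not spell out a proof of Theorem~\ref{EqualitySymSymtheoGeneral}, stating only that it follows from the reduction via $R_s$, Gerstenhaber's theorem, and Theorem~\ref{EqualitySymSymtheo}): you bound $\dim\calV$ through $R_s$, force equality throughout, invoke the two equality theorems to get $\calG$ and $\overline{\calF}$, concatenate them, and check that stabilizing the concatenated flag is equivalent to the pair of conditions on $u_{|\Rad(b)}$ and $\overline{u}$. The bookkeeping at the junction index is handled correctly (using $q^{-1}(\overline F_0)=\Rad(b)=G_{n-r}$ and the equivalence $u(F_{n-r+j})\subset F_{n-r+j}\Leftrightarrow\overline u(\overline F_j)\subset\overline F_j$), so there is no gap.
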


\begin{theo}\label{EqualityAltAlttheoGeneral}
Let $b$ be an alternating bilinear form on an $n$-dimensional vector space $V$, over a field with characteristic not $2$. Denote by $\nu$ the Witt index of $b$, by $r$ its rank, and let $\calV$ be a nilpotent linear subspace of $\calA_b$ with dimension
$\dbinom{n-r}{2}+r(n-r)+(\nu-n+r) (n-\nu-1)$.
Then, there exists a maximal partially complete $b$-singular flag $\calF=(F_0,\dots,F_\nu)$ of $V$ such that
$F_{n-r}=\Rad(b)$ and $\calV$ be the set of all nilpotent $b$-alternating endomorphisms of $V$ that stabilize $\calF$.
\end{theo}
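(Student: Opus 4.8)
The plan is to reduce the statement to the non-degenerate case (Theorem~\ref{EqualityAltAlttheo}) together with the standard Gerstenhaber theorem, by using the machinery set up at the end of Section~\ref{ProblemSection}. Write $R:=\Rad(b)$ and $m:=\dim R=n-r$, let $\overline{b}$ be the non-degenerate alternating form induced by $b$ on $W:=V/R$, and let $\overline{\nu}$ be its Witt index, so that $\dim W=r$ and $\overline{\nu}=\nu-n+r$ (see the observation preceding Theorem~\ref{TheoMajoGeneral}). Recall that the linear map $R_a\colon u\in\calA_b\mapsto(u_{|R},\overline{u})\in\End(R)\times\calA_{\overline{b}}$ is surjective, that its kernel $\calK$ is naturally isomorphic to $\calL(W,R)$ --- so $\dim\calK=r(n-r)$ and every element of $\calK$ is nilpotent --- and that $u\in\calA_b$ is nilpotent if and only if both $u_{|R}$ and $\overline{u}$ are nilpotent.

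First I would re-derive the dimension bound in this language and record what equality forces. Given a nilpotent subspace $\calV\subset\calA_b$, set $\calV_1:=\{u_{|R}\mid u\in\calV\}\subset\End(R)$ and $\calV_2:=\{\overline{u}\mid u\in\calV\}\subset\calA_{\overline{b}}$; both are nilpotent subspaces, so $\dim\calV_1\le\dbinom{m}{2}$ by Gerstenhaber's theorem and $\dim\calV_2\le\overline{\nu}(r-\overline{\nu}-1)$ by Theorem~\ref{TheoMajo}(b). Since $R_a(\calV)\subset\calV_1\times\calV_2$ and the kernel of the restriction of $R_a$ to $\calV$ is $\calV\cap\calK$, the rank--nullity theorem gives
$$\dim\calV=\dim(\calV\cap\calK)+\dim R_a(\calV)\le r(n-r)+\dbinom{n-r}{2}+(\nu-n+r)(n-\nu-1),$$
the last term being obtained from $\dim W=r$ and $\overline{\nu}=\nu-n+r$. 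When $\calV$ has the maximal dimension, all the inequalities above are equalities; hence $\calK\subset\calV$, $R_a(\calV)=\calV_1\times\calV_2$, and $\calV_1$ (resp.\ $\calV_2$) is a nilpotent subspace of $\End(R)$ (resp.\ of $\calA_{\overline{b}}$) of maximal dimension.

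Next I would feed each component into the relevant classification and glue the resulting flags. By the endomorphism form of Gerstenhaber's theorem there is a complete flag $\calG=(G_0,\dots,G_m)$ of $R$ such that $\calV_1$ is the set of all nilpotent endomorphisms of $R$ stabilizing $\calG$; by Theorem~\ref{EqualityAltAlttheo} applied to $\overline{b}$ there is a maximal partially complete $\overline{b}$-singular flag $\overline{\calF}=(\overline{F}_0,\dots,\overline{F}_{\overline{\nu}})$ of $W$ with $\calV_2=\WA_{\overline{b},\overline{\calF}}$. Put $F_i:=G_i$ for $0\le i\le m$ and let $F_{m+j}$ be the preimage in $V$ of $\overline{F}_j$ for $0\le j\le\overline{\nu}$ (this is consistent with $F_m=R$ since $\overline{F}_0=\{0\}$). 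One then checks routinely that $\calF:=(F_0,\dots,F_\nu)$ is a partially complete flag, that $F_{n-r}=F_m=R=\Rad(b)$, and that $F_\nu$ is totally $b$-singular because $F_\nu/R=\overline{F}_{\overline{\nu}}$ is totally $\overline{b}$-singular and $b$ factors through $\overline{b}$; since $p=\nu$ equals the Witt index of $b$, the flag $\calF$ is maximal.

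Finally I would identify $\calV$ with the space $\calV_\calF$ of all nilpotent $b$-alternating endomorphisms of $V$ stabilizing $\calF$. The key point is that an endomorphism $u\in\calA_b$ stabilizes $\calF$ if and only if $u_{|R}$ stabilizes $\calG$ and $\overline{u}$ stabilizes $\overline{\calF}$: the forward implication is immediate, and the backward one uses that each $F_{m+j}$ is the \emph{full} preimage of $\overline{F}_j$. Together with the nilpotency criterion for $R_a$, this gives $\calV_\calF=R_a^{-1}(\calV_1\times\calV_2)$; since $R_a$ is surjective and $\calK\subset\calV_\calF$, the rank--nullity theorem yields $\dim\calV_\calF=r(n-r)+\dim\calV_1+\dim\calV_2$, which is precisely the maximal dimension, so $\dim\calV_\calF=\dim\calV$. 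As $\calV\subset R_a^{-1}(\calV_1\times\calV_2)=\calV_\calF$ by the very definition of $\calV_1$ and $\calV_2$, equality of dimensions forces $\calV=\calV_\calF$. I expect no deep obstacle here, since the substantive analysis is entirely carried by Theorem~\ref{EqualityAltAlttheo} and Gerstenhaber's theorem; the delicate step is the gluing --- verifying that the concatenated flag is indeed maximal, partially complete and $b$-singular, and that $\calV_\calF$ really equals the \emph{full} preimage $R_a^{-1}(\calV_1\times\calV_2)$, which is exactly what guarantees that this preimage consists of nilpotent endomorphisms.
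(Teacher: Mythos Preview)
Your proposal is correct and is precisely the approach the paper itself indicates: the paper does not give a detailed proof of Theorem~\ref{EqualityAltAlttheoGeneral} but states that it follows ``mostly straightforwardly'' from Theorem~\ref{EqualityAltAlttheo}, Gerstenhaber's theorem, and the reduction technique of Section~\ref{ProblemSection}, which is exactly what you carry out. Your handling of the equality analysis (forcing $\calK\subset\calV$, $R_a(\calV)=\calV_1\times\calV_2$, and maximality of both $\calV_1$ and $\calV_2$), the gluing of the flags via preimages, and the identification $\calV_\calF=R_a^{-1}(\calV_1\times\calV_2)$ are all sound.
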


\subsection{Strategy, and structure of the article}

The present article is the first entry in a series of articles on the structured Gerstenhaber problem.
It has two ambitions: firstly, to serve as an introduction to the problem; secondly, to prove all the
results that can be obtained, with limited effort, by using the strategy of Mathes, Omladi\v{c} and Radjavi \cite{Mathes}; thirdly,
to pave the way for a resolution of Conjectures \ref{EqualityAltSymconj} and \ref{EqualitySymAltconj}.
When those results will be obtained, this will not be the end of the story at all, since there will remain:
\begin{itemize}
\item To tackle fields with characteristic $2$.
\item To tackle Conjectures \ref{EqualityAltSymconj} and \ref{EqualitySymAltconj}.
\end{itemize}
That extra work will prove to be much more difficult than what is featured in the present article, and
will be dealt with in a series of subsequent papers.

\vskip 3mm
As we have just said, our proofs of Theorems \ref{TheoMajo}, \ref{EqualitySymSymtheo} and \ref{EqualityAltAlttheo} are adaptations of the celebrated Mathes-Omladi\v{c}-Radjavi method,
which uses trace orthogonality techniques to both majorize the dimension of a nilpotent subspace and prove that any space with the maximal dimension is stable under squares. The trace method fails in the characteristic $2$ case, which explains that in this article we completely discard it except when Hermitian forms are involved (in that case it is commonplace that fields with characteristic $2$ do not carry any special difficulty).

The remainder of the article is organized as follows:

\begin{itemize}
\item In Section \ref{PrelimSection}, we recall the generalized trace orthogonality lemma
(Lemma \ref{tracelemma})  and give a new proof of it. We also prove a basic result on the triangularization of a nilpotent $b$-symmetric or $b$-alternating endomorphism.
\item In the next three sections, we give three different proofs of Theorem \ref{TheoMajo}.
The first one (Section \ref{InductiveSection}) is an inductive proof that emphasizes the use of elements of small rank in $\calS_b$ and $\calA_b$. The second one (Section \ref{DirectGerstenSection}) is a direct proof that relies upon Gerstenhaber's theorem.
The third one (Section \ref{DirectMORSection}) is a direct self-contained proof: it is an adaptation
of the Mathes-Omladi\v{c}-Radjavi proof of Gerstenhaber's theorem.
The point of giving three different proofs is that each one of them can be used to give a specific piece of
information on the structure of the spaces with maximal dimension. Collecting such information will probably
be of great interest in the prospect of proving Conjectures \ref{EqualityAltSymconj} and \ref{EqualitySymAltconj}.
\item Following the third proof of Theorem \ref{TheoMajo}, we apply the Mathes-Omladi\v{c}-Radjavi strategy in Section \ref{MaxDimSection} to obtain Theorems \ref{EqualitySymSymtheo} and \ref{EqualityAltAlttheo}: there, we combine information that is given by the inductive proof
with information that follows from the direct proof of Section \ref{DirectMORSection}.
\item Finally, Section \ref{HermitianSection} is devoted to the structured Gerstenhaber problem in the Hermitian case. There, the form $b$ is Hermitian with respect to a non-identity involution of $\F$, and we consider spaces of $b$-Hermitian endomorphisms.
\end{itemize}

\section{Preliminary results}\label{PrelimSection}

\subsection{General results on nilpotent spaces of endomorphisms}

We will need the following basic lemma. It was first proved by MacDonald, MacDougall and Sweet \cite{Macs}
under slightly stronger assumptions:

\begin{lemma}[Generalized trace lemma]\label{tracelemma}
Let $A$ and $B$ be matrices of $\Mat_n(\F)$, and $k$ be a non-negative integer.
Assume that the matrix $\lambda A+\mu B$ is nilpotent for at least $k+2$ pairwise linearly independent pairs $(\lambda,\mu) \in \F^2$.
Then, $\tr(A^k B)=0$.
\end{lemma}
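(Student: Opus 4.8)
The plan is to reduce everything to a single-variable polynomial identity obtained by expanding a characteristic polynomial, together with a Vandermonde-type argument to exploit the many nilpotency hypotheses. First I would consider the polynomial $p(\lambda,\mu) := \det(\lambda A + \mu B)$. This is homogeneous of degree $n$ in $(\lambda,\mu)$; more importantly, I want a finer object, namely the coefficients of the characteristic polynomial: for an indeterminate $t$, write
$$\det\bigl(tI_n - (\lambda A+\mu B)\bigr) = \sum_{j=0}^{n} (-1)^j\, c_j(\lambda,\mu)\, t^{n-j},$$
where each $c_j$ is the $j$-th elementary symmetric function of the eigenvalues of $\lambda A+\mu B$, hence a homogeneous polynomial of degree $j$ in $(\lambda,\mu)$ with coefficients in $\F$ (it is the sum of the $j$-by-$j$ principal minors of $\lambda A+\mu B$). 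A matrix $M \in \Mat_n(\F)$ is nilpotent if and only if all $c_j(M)$ vanish, i.e. its characteristic polynomial is $t^n$; in particular nilpotency of $\lambda A + \mu B$ forces $c_j(\lambda,\mu)=0$ for every $j$.

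Next I would focus on the coefficient $c_{k+1}$, a homogeneous polynomial of degree $k+1$ in the two variables $(\lambda,\mu)$. By hypothesis it vanishes at $k+2$ pairwise non-proportional points of $\F^2$. Dehomogenizing (treating $c_{k+1}(\lambda,1)$ as a polynomial of degree $\le k+1$ in $\lambda$, and separately accounting for a possible zero "at infinity" coming from the point proportional to $(1,0)$), a homogeneous binary form of degree $k+1$ that has $k+2$ projectively distinct zeros must be identically zero. Hence $c_{k+1}(\lambda,\mu)=0$ as a polynomial, so every coefficient of the monomial $\lambda^{i}\mu^{k+1-i}$ in $c_{k+1}$ vanishes. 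The key step is then to identify the coefficient of $\lambda^{k}\mu$ in $c_{k+1}(\lambda,\mu)$ with a constant multiple of $\tr(A^k B)$: expanding $\det(tI_n - \lambda A - \mu B)$ and collecting the term that is linear in $\mu$ and of degree $k$ in $\lambda$, one gets (up to sign and a nonzero integer factor that is invertible in any field, since it will turn out to be $\pm 1$) exactly $\tr(A^k B)$. This is the Newton-type identity: the coefficient of $t^{n-k-1}$ in $\det(tI - \lambda A)$ is $\pm c_{k+1}(\lambda,0)$, and differentiating the determinant once in the $B$-direction picks up $\tr\bigl((\lambda A)^k B\bigr)$ via the formula $\frac{d}{ds}\big|_{s=0}\det(\lambda A + sB) \cdot (\text{adjugate terms})$, more cleanly seen through the identity $\det(tI - \lambda A - \mu B) = \det(tI-\lambda A)\det\bigl(I - \mu (tI-\lambda A)^{-1}B\bigr)$ over the field of rational functions, whose expansion in $\mu$ has linear term $-\mu\,\tr\bigl((tI-\lambda A)^{-1}B\bigr)\det(tI-\lambda A)$; extracting the appropriate power of $t$ and $\lambda$ from $\tr\bigl((tI-\lambda A)^{-1}B\bigr)\det(tI-\lambda A)$ yields $\tr(A^k B)$.

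I expect the main obstacle to be the bookkeeping in that last identification: making sure the coefficient of $\lambda^k\mu$ (at the right power of $t$, namely $t^{n-k-1}$) in the characteristic polynomial really is $\pm\tr(A^kB)$ with a sign/integer factor that is harmless in every characteristic, rather than some more complicated symmetric expression in $A$ and $B$. The cleanest route is to work in $\F(t,\lambda,\mu)$ with the factorization $\det(tI-\lambda A-\mu B)=\det(tI-\lambda A)\cdot\det\bigl(I-\mu(tI-\lambda A)^{-1}B\bigr)$, expand the second factor as $1-\mu\tr\bigl((tI-\lambda A)^{-1}B\bigr)+O(\mu^2)$, so that the $\mu$-linear part of $\det(tI-\lambda A-\mu B)$ equals $-\mu\,\operatorname{tr}\!\bigl((tI-\lambda A)^{-1}B\bigr)\,\det(tI-\lambda A)$; then expand $(tI-\lambda A)^{-1}=\sum_{m\ge 0}\lambda^m t^{-m-1}A^m$ and $\det(tI-\lambda A)=\sum_j(-1)^j c_j(A)\lambda^j t^{n-j}$, and read off the coefficient of $\lambda^k t^{n-k-1}\mu$. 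One checks the only contribution of total $\lambda$-degree $k$ landing at $t$-degree $n-k-1$ comes from the $m=k$, $j=0$ term, giving $-\mu\lambda^k t^{n-k-1}\tr(A^k B)$ exactly. Comparing with $\sum_j(-1)^jc_j(\lambda,\mu)t^{n-j}$ gives (coefficient of $t^{n-k-1}$) $=(-1)^{k+1}c_{k+1}(\lambda,\mu)$, whose $\lambda^k\mu$-coefficient is therefore $\pm\tr(A^kB)$. Since $c_{k+1}\equiv 0$, we conclude $\tr(A^kB)=0$, as desired. A degenerate case worth a line: if fewer than $k+2$ of the $t$-powers are even present because $n<k+1$, then $\tr(A^kB)$ is $0$ for a trivial reason (or one argues directly that $A^kB$ cannot contribute), so one may assume $n\ge k+1$ throughout.
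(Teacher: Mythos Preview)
Your strategy is the same as the paper's: use the projective-vanishing argument to kill the homogeneous form $c_{k+1}(\lambda,\mu)$, then extract the coefficient of $\lambda^k\mu$ via the factorization $\det(tI-\lambda A-\mu B)=\det(tI-\lambda A)\bigl(1-\mu\,\tr((tI-\lambda A)^{-1}B)+O(\mu^2)\bigr)$. But your coefficient extraction contains an error. You assert that in the product
\[
\Bigl(\sum_{m\ge 0}\lambda^m t^{-m-1}\tr(A^mB)\Bigr)\Bigl(\sum_{j\ge 0}(-1)^j c_j(A)\,\lambda^j t^{n-j}\Bigr)
\]
only $(m,j)=(k,0)$ contributes to $\lambda^k t^{n-k-1}$. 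In fact the two constraints $m+j=k$ and $(-m-1)+(n-j)=n-k-1$ are the \emph{same equation}, so every pair $(m,j)$ with $m+j=k$, $m\ge 0$, $j\ge 0$ contributes. The coefficient of $\lambda^k\mu\, t^{n-k-1}$ in $\det(tI-\lambda A-\mu B)$ is therefore
\[
-\sum_{j=0}^{k}(-1)^j c_j(A)\,\tr(A^{k-j}B),
\]
not simply $-\tr(A^kB)$. Since the hypothesis does not force $(1,0)$ to be among the nilpotent directions, the $c_j(A)$ for $j\ge 1$ need not vanish, and you cannot conclude from the vanishing of $c_{k+1}$ alone.

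The repair---and this is exactly what the paper does---is to note that the same projective-vanishing argument kills every $c_{p+1}(\lambda,\mu)$ for $0\le p\le k$ (each has degree $p+1\le k+1$ and at least $k+2$ distinct projective zeros). Extracting the $\lambda^p\mu$-coefficient from each gives the triangular system
\[
\sum_{j=0}^{p}(-1)^j c_j(A)\,\tr(A^{p-j}B)=0\qquad(p=0,1,\dots,k),
\]
and since $c_0(A)=1$, induction on $p$ yields $\tr(A^pB)=0$ for every $p\le k$. So your argument is one step short: the missing ingredient is precisely this induction over the lower-degree coefficients $c_p$.
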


Our proof is a variation of the one from \cite{Macs}. Given $M \in \Mat_n(\F)$, we write the characteristic polynomial of $M$ as
$$\chi_M(t)=\det(tI_n-M)=\sum_{k=0}^n c_k(M)\, t^{n-k}$$
so that, by setting  $c_k(M):=0$ whenever $k>n$, we find
$$\det(I_n-tM)=\sum_{k=0}^{+\infty} c_k(M)\, t^k.$$
The above lemma will then be seen as a consequence of the following one:

\begin{lemma}\label{difflemma}
Let $A$ and $B$ be matrices of $\Mat_n(\F)$.
For every positive integer $k$, the polynomial $c_k(A+sB)$ of $\F[s]$ has derivative $-\underset{i=0}{\overset{k-1}{\sum}} c_{k-1-i}(A) \tr(A^i B)$ at zero.
\end{lemma}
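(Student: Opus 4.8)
The plan is to encode all the coefficients $c_k$ simultaneously in a two-variable formal identity and then differentiate it in the auxiliary parameter. I introduce indeterminates $s$ and $t$ and work with
$$\det\bigl(I_n-t(A+sB)\bigr)=\sum_{k\geq 0}c_k(A+sB)\,t^k,$$
an equality in $\F[s,t]$; computing $\tfrac{d}{ds}\big|_{s=0}c_k(A+sB)$ for all $k$ at once amounts to reducing this identity modulo $s^2$ and reading off the coefficient of $s$.

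The first key step is the factorization
$$I_n-t(A+sB)=(I_n-tA)\bigl(I_n-st\,(I_n-tA)^{-1}B\bigr),$$
which makes sense in $\Mat_n(\F[[t]])[s]$ because $I_n-tA$ is invertible over $\F[[t]]$ with inverse $\sum_{i\geq 0}t^iA^i$, and which is checked by a one-line expansion (the scalars $s,t$ are central, so no noncommutativity issue intervenes). Taking determinants gives
$$\det\bigl(I_n-t(A+sB)\bigr)=\det(I_n-tA)\cdot\det\bigl(I_n-st\,(I_n-tA)^{-1}B\bigr).$$
The second key step is the universal identity $\det(I_n-sN)\equiv 1-s\,\tr(N)\pmod{s^2}$, valid for $N$ a square matrix over any commutative ring since it is a polynomial identity with integer coefficients (the part of $\det(I_n+X)$ linear in the entries of $X$ is $\tr X$). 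Applying it with $N:=t\,(I_n-tA)^{-1}B$ over the ring $\F[[t]]$, I obtain
$$\det\bigl(I_n-t(A+sB)\bigr)\equiv\det(I_n-tA)\Bigl(1-s\,t\,\tr\bigl((I_n-tA)^{-1}B\bigr)\Bigr)\pmod{s^2}.$$
Since no integer is ever inverted, this holds in every characteristic.

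To conclude, I expand and match coefficients of $s\,t^k$. The coefficient of $s$ on the left is $\sum_{k\geq 1}\bigl(\tfrac{d}{ds}\big|_{s=0}c_k(A+sB)\bigr)t^k$; on the right, writing $\det(I_n-tA)=\sum_{j\geq 0}c_j(A)t^j$ and $\tr\bigl((I_n-tA)^{-1}B\bigr)=\sum_{i\geq 0}\tr(A^iB)\,t^i$, it equals
$$-t\Bigl(\sum_{j\geq 0}c_j(A)t^j\Bigr)\Bigl(\sum_{i\geq 0}\tr(A^iB)\,t^i\Bigr)=-\sum_{k\geq 1}\Bigl(\sum_{i=0}^{k-1}c_{k-1-i}(A)\,\tr(A^iB)\Bigr)t^k,$$
and comparing the coefficients of $t^k$ delivers the claimed derivative. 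I do not foresee a genuine obstacle here: the argument is a routine generating-function manipulation, and the only points that deserve a careful word are purely formal — the invertibility of $I_n-tA$ in $\Mat_n(\F[[t]])$, the universality (hence characteristic-freeness) of $\det(I_n-sN)\equiv 1-s\tr(N)\pmod{s^2}$, and the bookkeeping in the last coefficient comparison. As a remark, one could instead differentiate the determinant directly via Jacobi's formula, but the factorization above makes any such appeal unnecessary.
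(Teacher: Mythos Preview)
Your argument is correct and follows essentially the same generating-function strategy as the paper: encode all $c_k(A+sB)$ via $\det(I_n-t(A+sB))$, extract the $s$-linear part, and expand $-t\det(I_n-tA)\,\tr\bigl((I_n-tA)^{-1}B\bigr)$ as a power series in $t$. The only tactical difference is that the paper reaches this intermediate expression via Jacobi's formula (the adjugate), whereas you reach it by the factorization $I_n-t(A+sB)=(I_n-tA)\bigl(I_n-st(I_n-tA)^{-1}B\bigr)$ together with $\det(I_n-sN)\equiv 1-s\,\tr N\pmod{s^2}$; both routes are standard and equally valid.
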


\begin{proof}
We consider $A$ and $B$ as matrices over the field $\F((s,t))$ of fractions of the power series ring in two variables $s$ and $t$ with coefficients in $\F$.
Differentiating the identity
$$\det (I_n-t(A+sB))=\sum_{k=0}^{+\infty} c_k(A+sB)\, t^k$$
at $0$ with respect to $s$ yields
$$\tr\bigl((I_n-tA)^\ad (-tB)\bigr)= \sum_{k=0}^{+\infty} \frac{\diff c_k(A+sB)}{\diff s}\Bigr|_{s=0}\, t^k,$$
where $M^\ad$ denotes the principal adjoint of the square matrix $M$ (i.e.\ the transpose of the comatrix of $M$).

The left-hand side of that equality can be rewritten
\begin{align*}
\tr\bigl((I_n-tA)^\ad (-tB)\bigr) & = -t \det(I_n-tA) \tr((I_n-tA)^{-1} B) \\
& =-t \det(I_n-tA) \sum_{k=0}^{+\infty} \tr(A^k B)\, t^k \\
& = -t \biggl(\sum_{i=0}^{+\infty} c_i(A)\, t^i\biggr) \biggl(\sum_{k=0}^{+\infty} \tr(A^k B)\, t^k\biggr) \\
& = -\sum_{k=1}^{+\infty}\biggl(\sum_{i=0}^{k-1} c_{k-1-i}(A) \tr(A^iB)\biggr) t^k.
\end{align*}
The claimed equalities follow.
\end{proof}

\begin{proof}[Proof of Lemma \ref{tracelemma}]
Let $p \in \lcro 0,k\rcro$.
Note that $c_{p+1}(tA+sB)$ is a homogeneous polynomial of $\F[s,t]$ with degree $p+1$.
By assumption, it vanishes at $k+2$ elements (at least) of the projective space $\Pgros(\F^2)$, whence
it is identically zero. In particular, its partial derivative with respect to $s$ at $(t,s)=(1,0)$ equals zero.
By Lemma \ref{difflemma}, this yields $\underset{i=0}{\overset{p}{\sum}} c_{p-i}(A) \tr(A^i B)=0$.
Noting that $c_0(A)=1$, we deduce by induction that $\tr(A^p B)=0$ for all $p \in \lcro 0,k\rcro$.
\end{proof}

As a corollary of the generalized trace lemma, we have:

\begin{lemma}[Trace Orthogonality Lemma]\label{tracecor}
Let $u,v$ be endomorphisms of a finite-dimensional vector space over $\F$.
\begin{enumerate}[(a)]
\item If every linear combination of $u$ and $v$ is nilpotent, then $\tr(uv)=0$.
\item Given a positive integer $k$, if $v$ is nilpotent and there are at least $k+1$ values of $\lambda$ in $\F$
for which $u+\lambda v$ is nilpotent, then $\tr(u^kv)=0$.
\end{enumerate}
\end{lemma}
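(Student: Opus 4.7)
The plan is to deduce both statements directly from the generalized trace lemma (Lemma \ref{tracelemma}) by exhibiting suitable lists of pairwise linearly independent pairs $(\lambda,\mu)\in\F^2$. First, after fixing a basis of the underlying space, I would view $u$ and $v$ as matrices in $\Mat_n(\F)$; this is harmless because the trace of any polynomial expression in $u$ and $v$ depends only on the endomorphism it defines, not on the chosen basis.

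For part (a), I would apply Lemma \ref{tracelemma} with $A:=u$, $B:=v$ and the value $k:=1$, so that the trace relation obtained is precisely $\tr(uv)=0$. The hypothesis of the lemma requires $k+2=3$ pairwise linearly independent pairs $(\lambda,\mu)$ for which $\lambda u+\mu v$ is nilpotent. Since by assumption \emph{every} $\F$-linear combination of $u$ and $v$ is nilpotent, the three pairs $(1,0)$, $(0,1)$, $(1,1)$ do the job, and they are pairwise linearly independent over every field, including $\F_2$ (where the projective line has exactly these three points). The conclusion is immediate.

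For part (b), I would apply Lemma \ref{tracelemma} to the same matrices $A:=u$, $B:=v$, but with the given integer $k$, so as to obtain $\tr(u^k v)=0$. Denote by $\lambda_1,\dots,\lambda_{k+1}$ the pairwise distinct scalars provided by the assumption, so that each $u+\lambda_i v$ is nilpotent; moreover $v$ itself is nilpotent. Consider then the $k+2$ pairs $(1,\lambda_1),\dots,(1,\lambda_{k+1}),(0,1)$. They are pairwise linearly independent: the first $k+1$ of them are so because the $\lambda_i$ are distinct, and $(0,1)$ is not collinear with any $(1,\lambda_i)$. Each of these pairs yields a nilpotent combination of $u$ and $v$, so Lemma \ref{tracelemma} applies and gives the desired identity. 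There is no genuine obstacle in either part; the only mildly delicate point is to keep track of pairwise linear independence in the small-field case, which is why I would exhibit the pairs explicitly rather than appeal to a cardinality count of $\mathbb{P}(\F^2)$.
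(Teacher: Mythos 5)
Your proposal is correct and is exactly the intended argument: the paper states Lemma \ref{tracecor} as an immediate corollary of Lemma \ref{tracelemma} without writing out the details, and your choices of pairwise linearly independent pairs ($(1,0),(0,1),(1,1)$ for part (a); $(1,\lambda_1),\dots,(1,\lambda_{k+1}),(0,1)$ for part (b)) are precisely how that deduction goes. No gaps.
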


\subsection{Stable flags for a nilpotent $b$-symmetric or $b$-alternating endomorphism}

In the analysis of the nilpotent spaces with maximal dimension, the following basic result will be useful.
Note that it holds regardless of the characteristic of the underlying field:

\begin{lemma}\label{stableflaglemma}
Let $u$ be a nilpotent $b$-symmetric or $b$-alternating endomorphism of $V$.
Then, there exists a maximal partially complete $b$-singular flag $\calF$ of $V$
that is stable under $u$.
\end{lemma}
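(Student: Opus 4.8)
The plan is to prove Lemma~\ref{stableflaglemma} by induction on the Witt index $\nu$ of $b$, constructing the flag one step at a time from the bottom. If $\nu=0$ there is nothing to do (the empty flag $(F_0)=(\{0\})$ works), so assume $\nu\geq 1$. The crux is to find a single vector $e_1$ that is both isotropic (i.e.\ $b(e_1,e_1)=0$, so that $F_1:=\Vect(e_1)$ is totally $b$-singular) and fixed by $u$ up to scalar in the strong sense that $u(e_1)=0$; more precisely, I want a nonzero $e_1 \in \Ker u$ with $e_1 \in (\im u)^{\bot}$-compatible isotropic behaviour. Since $u$ is nilpotent, $\Ker u\neq\{0\}$; the real issue is to find a nonzero isotropic vector inside $\Ker u$ together with enough of the orthogonality structure to iterate.

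First I would handle the non-degenerate case, to which the general case reduces via the elementary Lemma on $\Rad(b)$ stated in Section~\ref{ProblemSection}: indeed $u$ stabilizes $\Rad(b)$, and a $b$-singular flag of $V$ of length $\nu$ is obtained by concatenating a complete flag of $\Rad(b)$ (automatically $u$-stable since $u_{|\Rad(b)}$ is nilpotent) with the pullback of a maximal partially complete $\overline{b}$-singular $\overline{u}$-stable flag of $V/\Rad(b)$; note $\dim\Rad(b)+(\text{Witt index of }\overline{b})=\nu$. So assume $b$ non-degenerate. By Lemma~\ref{stableortholemma}(b), $\Ker u=(\im u)^{\bot}$, hence $\dim\Ker u=n-\rk u\geq 1$. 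I claim $\Ker u$ contains a nonzero isotropic vector. If $b_{|\Ker u}$ were non-isotropic, then $\Ker u$ would be a $b$-regular subspace and $V=\Ker u\oplus(\Ker u)^{\bot}=\Ker u\oplus\im u$, so $u$ would induce an automorphism of $\im u$; but that induced map is nilpotent, forcing $\im u=\{0\}$ and $u=0$, a contradiction with $\nu\geq 1$ unless $n\le 2\nu$ forces $u=0$ only when $V$ is anisotropic — in fact if $u=0$ any maximal $b$-singular flag works, so we may assume $u\neq 0$. Thus $b_{|\Ker u}$ is isotropic: there is a nonzero $e_1\in\Ker u$ with $b(e_1,e_1)=0$.

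Now set $F_1:=\Vect(e_1)$; it is totally $b$-singular and $u$-stable (in fact $u(e_1)=0$). Consider $W:=\Vect(e_1)^{\bot}=e_1^{\bot}$, which contains $e_1$ (as $e_1$ is isotropic) and is $u$-stable by Lemma~\ref{stableortholemma}(a). Pass to the quotient $\overline{W}:=W/F_1$: since $F_1$ is totally singular inside $W$, the form $b$ induces a well-defined non-degenerate symmetric or alternating form $\widetilde{b}$ on $\overline{W}$ (this is the standard construction of the form on $x^{\bot}/x$ for isotropic $x$), with Witt index $\nu-1$; moreover $u$ induces a nilpotent endomorphism $\widetilde{u}$ of $\overline{W}$, and a routine check shows $\widetilde{u}$ is $\widetilde{b}$-symmetric (resp.\ $\widetilde{b}$-alternating). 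By the induction hypothesis applied to $(\overline{W},\widetilde{b},\widetilde{u})$, there is a maximal partially complete $\widetilde{b}$-singular $\widetilde{u}$-stable flag $(\overline{F}_0,\dots,\overline{F}_{\nu-1})$ of $\overline{W}$. Pulling these subspaces back to $W\subset V$ (adding $F_1$ to each preimage and prepending $F_0=\{0\}$, $F_1$) yields an increasing chain $F_0\subset F_1\subset\dots\subset F_\nu$ of subspaces of $V$ with $\dim F_i=i$, each $u$-stable, with $F_\nu$ totally $b$-singular because its image in $\overline{W}$ is totally $\widetilde{b}$-singular and it contains the totally singular line $F_1$ (the key point: $F_\nu\subset F_\nu^{\bot}$ follows from $\overline{F}_{\nu-1}\subset\overline{F}_{\nu-1}^{\perp_{\widetilde b}}$ together with $F_\nu\subset e_1^\bot$ and $e_1\in F_\nu$). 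This is the desired flag.

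I expect the main obstacle to be the bookkeeping in the descent step: carefully verifying that the induced form $\widetilde b$ on $e_1^{\bot}/\Vect(e_1)$ is non-degenerate of Witt index exactly $\nu-1$, that $\widetilde u$ inherits the $b$-symmetric/$b$-alternating property, and — most delicately — that the pulled-back flag $F_\nu$ really is totally $b$-singular in $V$ and not merely ``singular modulo $e_1$''. One must check that for $x,y\in F_\nu$ the value $b(x,y)$, which is $\widetilde b(\overline x,\overline y)=0$ by construction, is genuinely zero in $\F$; this is immediate once one notes $\widetilde b$ is defined precisely so that $\widetilde b(\overline x,\overline y)=b(x,y)$ for $x,y\in e_1^\bot$, and every element of $F_\nu$ lies in $e_1^\bot$. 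The characteristic-$2$ subtleties that plague the existence of \emph{strongly} adapted bases (noted after the definition of adapted basis) do not arise here, since we only demand $F_\nu$ totally singular, not the existence of a complementary totally singular subspace; this is why the lemma holds in all characteristics.
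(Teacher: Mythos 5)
Your proof is correct and follows essentially the same inductive scheme as the paper's: pick an isotropic $e_1\in\Ker u$, pass to $e_1^\bot/\F e_1$ with the induced form and induced endomorphism, apply the induction hypothesis, and pull the flag back. The only real difference is cosmetic — you argue by contradiction (re-running the argument of Lemma~\ref{nonisotropicLemma}) that $b_{|\Ker u}$ must be isotropic, whereas the paper simply takes any nonzero $x\in\Ker u\cap\im u$ (nonempty for nilpotent $u\neq 0$), which is automatically isotropic since $\Ker u=(\im u)^\bot$ by Lemma~\ref{stableortholemma}.
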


\begin{proof}
We proceed by induction on the dimension of $V$. The result is obvious if $u=0$. Assume now that $u \neq 0$, and denote by $\nu$
the Witt index of $b$.
Since $u$ is nilpotent, $\Ker u \cap \im u \neq \{0\}$ and we choose a non-zero vector $x \in \Ker u \cap \im u$. Then, $u$ stabilizes the linear hyperplane $\{x\}^\bot$, which includes $\F x$.
The bilinear form $b$ induces a bilinear form $\overline{b}$ (either symmetric or alternating)
on $\{x\}^\bot/\F x$ with Witt index $\nu-1$.
The endomorphism $u$ then induces an endomorphism of $\{x\}^\bot/\F x$ that is
$\overline{b}$-symmetric or $\overline{b}$-alternating. By induction,
there is a maximal partially complete $\overline{b}$-singular flag $(G_0,\dots,G_{\nu-1})$ of $\{x\}^\bot/\F x$
that is stable under $\overline{u}$. For all $k \in \lcro 1,\nu\rcro$, denote by $F_k$
the inverse image of $G_{k-1}$ under the canonical projection of $\{x\}^\bot$ onto $\{x\}^\bot/\F x$.
Then, $(\{0\},F_1,\dots,F_\nu)$ is a maximal partially complete $b$-singular flag of $V$
that is stable under $u$.
\end{proof}

\section{The maximal dimension: proof by induction}\label{InductiveSection}

Throughout this section, $b$ denotes a non-degenerate symmetric or alternating bilinear form on a finite-dimensional vector space $V$, over a field $\F$ with characteristic not $2$.

In this section, we give a geometric proof of Theorem \ref{TheoMajo}, by induction on the dimension of the space $V$.
This proof can be viewed as an adaptation of a classical inductive proof of Gerstenhaber's theorem
(see e.g.\ \cite{Macs}). The main tool is the consideration of elements of small rank in $\calS_b$ and $\calA_b$.
In the standard Gerstenhaber problem, one considers elements of rank $1$. Here, elements of rank $1$ or $2$ will be considered:
the next paragraph consists of a systematic study of such elements.

\subsection{Symmetric and alternating tensors}

Let $x$ and $y$ be vectors of $V$.
The endomorphism
$$z \in V \mapsto b(y,z)\,x+b(x,z)\,y$$
is easily shown to be $b$-symmetric: we call it the \textbf{$b$-symmetric} tensor of $x$ and $y$, and we denote it by
$x \otimes_b y$.
We note that $\otimes_b$ is a symmetric bilinear mapping from $V^2$ to $\calS_b$.

Likewise, the endomorphism
$$z \in V \mapsto b(y,z)\,x-b(x,z)\,y$$
is easily shown to be $b$-alternating: we call it the \textbf{$b$-alternating} tensor of $x$ and $y$, and we denote it by
$x \wedge_b y$.
We note that $\wedge_b$ is an alternating bilinear mapping from $V^2$ to $\calA_b$.

One checks that, for all $x \in V \setminus \{0\}$, the linear mapping
$y \mapsto x \otimes_b y$ is injective, whereas $y \mapsto x \wedge_b y$ has its kernel equal to $\F x$.
Given a linear subspace $L$ of $V$, we denote by $x \otimes_b L$ (respectively, by $x \wedge_b L$) the space of all tensors
$x \otimes_b y$ (respectively, $x \wedge_b y$) with $y \in L$.

Now, assume furthermore that $b(x,x)=0$ and $b(x,y)=0$, and let $u$ denote $x \otimes_b y$ or $x \wedge_b y$.
Note that $u(x)=0$ and $u$ maps $\{x\}^\bot$ (which includes $\Vect(x,y)$) into $\F x$.
Then, $\im u \subset \Vect(x,y)$ and $u(\Vect(x,y)) \subset \F x$, whence $\im u^2 \subset \F x$ and $\im u^3=\{0\}$.
Thus, in that case $u$ is nilpotent (with nilindex at most $3$).
Now, we prove a converse statement:

\begin{prop}\label{caractensors}
Let $x \in V$ be a non-zero vector such that $b(x,x)=0$.
Let $u \in \calS_b$ (respectively, $u \in \calA_b$) be such that $u(x)=0$ and $u(\{x\}^\bot) \subset \F x$.
Then, there exists $y \in \{x\}^\bot$ such that $u=x \otimes_b y$ (respectively, $u=x \wedge_b y$).
\end{prop}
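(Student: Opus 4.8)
The plan is to recover a suitable $y$ from the action of $u$ on the hyperplane $\{x\}^\bot$ (note that $x\in\{x\}^\bot$ since $b(x,x)=0$), and then to check, line by line on a decomposition $V=\{x\}^\bot\oplus\F z_0$, that $u$ coincides with the corresponding tensor. I would also record, as a harmless preliminary, that $\im u\subset\{x\}^\bot$ (for instance from $\Ker u=(\im u)^\bot$, Lemma~\ref{stableortholemma}), although this will come out of the computation below anyway.

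First I would describe $u$ on $\{x\}^\bot$. Since $u(\{x\}^\bot)\subset\F x$ and $u(x)=0$, there is a unique linear form $\varphi$ on $\{x\}^\bot$, vanishing on $\F x$, with $u(v)=\varphi(v)\,x$ for all $v\in\{x\}^\bot$. The restriction of $b$ to $\{x\}^\bot$ has radical $\{x\}^\bot\cap(\{x\}^\bot)^\bot=\F x$ (here one uses the non-degeneracy of $b$ together with $b(x,x)=0$), so it induces a non-degenerate form on $\{x\}^\bot/\F x$; as $\varphi$ factors through that quotient, there exists $y\in\{x\}^\bot$ such that $\varphi(v)=b(y,v)$, i.e.\ $u(v)=b(y,v)\,x$, for every $v\in\{x\}^\bot$. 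Because $b(x,v)=0$ on $\{x\}^\bot$, the defining formulas of $x\otimes_b y$ and $x\wedge_b y$ now show immediately that $u$ agrees with $x\otimes_b y$ (resp.\ with $x\wedge_b y$) on the hyperplane $\{x\}^\bot$.

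It then remains to match the two endomorphisms on a complementary line. Fix $z_0\in V$ with $b(x,z_0)=1$ (possible by non-degeneracy) and write $V=\{x\}^\bot\oplus\F z_0$. Using the symmetry of $(v,w)\mapsto b(v,u(w))$ in the $\calS_b$ case (resp.\ the identity $b(v,u(w))=-b(w,u(v))$, valid because that form is alternating, in the $\calA_b$ case), together with $u(v)=b(y,v)\,x$ on $\{x\}^\bot$, a one-line computation — in which the sign $\epsilon\in\{1,-1\}$ recording whether $b$ itself is symmetric or alternating enters through $b(z_0,x)=\epsilon\,b(x,z_0)$ and $b(v,y)=\epsilon\,b(y,v)$ and then cancels — gives $b(v,u(z_0))=b(v,y)$ (resp.\ $b(v,u(z_0))=-b(v,y)$) for all $v\in\{x\}^\bot$. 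Hence $u(z_0)-y$ (resp.\ $u(z_0)+y$) lies in $(\{x\}^\bot)^\bot=\F x$, say $u(z_0)=y+cx$ (resp.\ $u(z_0)=-y+cx$) with $c\in\F$.

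The last point is the only delicate one, and it is where the two cases diverge; I expect it to be the main obstacle. In the $\calA_b$ case, evaluating the alternation identity $b(z_0,u(z_0))=0$ forces $c=b(y,z_0)$, which is precisely the value making $u(z_0)=(x\wedge_b y)(z_0)$; so $u=x\wedge_b y$ with no further modification. In the $\calS_b$ case there is no such automatic constraint, so instead I would perturb $y$: replacing $y$ by $y+tx$ keeps it in $\{x\}^\bot$, does not change $u$ on $\{x\}^\bot$, and shifts $(x\otimes_b y)(z_0)$ by $2t\,x$; since $\car\F\neq 2$ one can choose $t$ so that $(x\otimes_b(y+tx))(z_0)=u(z_0)$, whence $u=x\otimes_b(y+tx)$. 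Apart from this dichotomy, the only real care required is the bookkeeping of the $\epsilon$ signs and the observation that the identity $(\{x\}^\bot)^\bot=\F x$ genuinely relies on $b$ being non-degenerate.
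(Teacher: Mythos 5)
Your proof is correct, but it takes a genuinely different route from the paper's. The paper stratifies by $\rk u\in\{0,1,2\}$: the rank-$1$ case is handled by an explicit computation with linear forms (with $\car\F\neq 2$ entering through a factor $\lambda/2$, and an impossibility in the alternating case), and the rank-$2$ case is reduced to rank $\leq 1$ by subtracting the tensor $x\otimes_b(\lambda y)$ (resp.\ $x\wedge_b(\lambda y)$). You instead construct $y$ directly: since $u(x)=0$, the linear form $\varphi$ with $u(v)=\varphi(v)\,x$ on $\{x\}^\bot$ factors through the non-degenerate form induced on $\{x\}^\bot/\F x$, giving $u(v)=b(y,v)\,x$ there; you then pin down $u$ on a complementary line $\F z_0$ with $b(x,z_0)=1$ by exploiting the symmetry (resp.\ alternation) of $(v,w)\mapsto b(v,u(w))$, obtaining $u(z_0)=\pm y+cx$, and you fix the remaining scalar either from the alternation constraint $b(z_0,u(z_0))=0$ or by shifting $y\mapsto y+tx$ (which is where $\car\F\neq 2$ enters for you, through the factor $2t$). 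Your approach is more constructive and makes the roles of non-degeneracy (via $(\{x\}^\bot)^\bot=\F x$) and of the characteristic hypothesis very visible; the paper's rank-reduction trick is shorter and avoids ever having to work with a complement of $\{x\}^\bot$.
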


\begin{proof}
Since $x \in \Ker u$, we have $\im u \subset \{ x\}^\bot$.
Using $u(\{x\}^\bot) \subset \F x$, we get that either $u(\{x\}^\bot)=\F x$, in which case
$\im u$ has dimension at most $2$ and includes $\F x$, or $\{x\}^\bot \subset \Ker u$, in which case
$\im u \subset (\{x\}^\bot)^\bot=\F x$.

The claimed statement is obvious if $u=0$.
Assume now that $u$ has rank $1$. Then, from the starting remarks we find $\im u=\F x$, whence
$u : z \mapsto \varphi(z)\,x$ for some linear form $\varphi$ on $V$.
Since $u(z)=0$ for all $z \in \{x\}^\bot$, we find $\varphi=\lambda\, b(x,-)$ for some $\lambda \in \F$.
Thus $u=\frac{\lambda}{2}\,x \otimes_b x$ and in particular $u$ is $b$-symmetric.
If $u$ is $b$-alternating it follows that $u=0=x \wedge_b x$ (which actually contradicts the assumption that $\rk u=1$).

Assume finally that $u$ has rank $2$. This yields a vector $y \in \{x\}^\bot \setminus \F x$ such that
$\Vect(x,y)=\im u$, and we recover linear forms $\varphi$ and $\psi$ on $V$ such that
$$u : z \mapsto \varphi(z)\,x+\psi(z)\,y.$$
Then, every $z \in \{x\}^\bot$ annihilates $\psi$, whence $\psi=\lambda\, b(x,-)$ for some $\lambda \in \F$.

Replacing $u$ with $u-x \otimes_b (\lambda y)$ (respectively, with $u-x \wedge_b (\lambda y)$) draws us back to the case when $\rk u \leq 1$, and the conclusion follows.
\end{proof}

The next two lemmas are deduced from the Trace Orthogonality Lemma:

\begin{lemma}\label{orthotensorsym}
Let $\calV$ be a nilpotent linear subspace of $\calS_b$.
Let $x$ be a non-zero vector of $V$. Let $u \in \calV$ and $y \in V$  be such that $x \otimes_b y \in \calV$.
Then, $b(u(x),y)=0$.
\end{lemma}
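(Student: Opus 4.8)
The plan is to derive the identity from the Trace Orthogonality Lemma, after which everything reduces to one explicit trace computation. Set $v:=x\otimes_b y$. By hypothesis both $u$ and $v$ belong to the nilpotent linear subspace $\calV$, so every linear combination of $u$ and $v$ is nilpotent; point~(a) of Lemma~\ref{tracecor} then gives $\tr(uv)=0$. The whole statement now hinges on expressing $\tr(uv)$ in terms of $x$, $y$ and $u$.

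For that computation I would use the explicit formula $v:z\mapsto b(y,z)\,x+b(x,z)\,y$ defining the $b$-symmetric tensor, which yields
$$u\circ v:z\longmapsto b(y,z)\,u(x)+b(x,z)\,u(y),$$
an endomorphism written as a sum of two terms of rank at most one. Since a rank-one endomorphism $z\mapsto\varphi(z)\,w$ has trace $\varphi(w)$ and the trace is additive, this gives $\tr(uv)=b(y,u(x))+b(x,u(y))$.

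To conclude, I would invoke the $b$-symmetry of $u$, which by definition says that the bilinear form $(p,q)\mapsto b(p,u(q))$ is symmetric; specializing to $(p,q)=(x,y)$ gives $b(x,u(y))=b(y,u(x))$, hence $\tr(uv)=2\,b(y,u(x))$. Combining this with $\tr(uv)=0$ and $\car\F\neq 2$ forces $b(y,u(x))=0$, and therefore $b(u(x),y)=0$ as well, since $b$ is symmetric or alternating and the two scalars differ at most by a sign. There is no real obstacle here: the argument is essentially a one-line trace computation, and the only points needing attention are the appearance of the factor $2$ (which is precisely why characteristic $2$ must be excluded) and keeping the three closely related scalars $b(y,u(x))$, $b(x,u(y))$ and $b(u(x),y)$ straight.
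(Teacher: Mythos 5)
Your proof is correct and follows essentially the same route as the paper's: apply the Trace Orthogonality Lemma to $u$ and $v=x\otimes_b y$, compute $\tr(uv)$ as a sum of two rank-one traces to get $b(y,u(x))+b(x,u(y))$, use the $b$-symmetry of $u$ to combine the two terms into $2\,b(y,u(x))$, and divide by $2$. The only cosmetic difference is that you expand $u\circ v$ while the paper expands $v\circ u$, which produces the same two trace contributions.
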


\begin{proof}
Set $v:=x \otimes_b y$. By Lemma \ref{tracecor}, we have $\tr(v \circ u)=0$. Note that
$$(v \circ u) : z \mapsto b\bigl(y,u(z)\bigr)\, x+b\bigl(x,u(z)\bigr)\, y.$$
Since the operator $z \mapsto b(y,u(z))\, x$ has its range included in $\F x$, its trace
is the one of its restriction to $\F x$, that is $b(y,u(x))$. Likewise, the trace of
$z \mapsto b(x,u(z))\, y$ equals $b(x,u(y))$, and we deduce that
$$0=\tr(v \circ u)=b\bigl(y,u(x)\bigr)+b\bigl(x,u(y)\bigr)=2\, b\bigl(y,u(x)\bigr)=\pm 2\,b\bigl(u(x),y\bigr).$$
Since the characteristic of $\F$ is not $2$, this yields the claimed result.
\end{proof}

\begin{lemma}\label{orthotensoralt}
Let $\calV$ be a nilpotent linear subspace of $\calA_b$.
Let $x$ be a non-zero vector of $V$. Let $u \in \calV$ and $y \in V$  be such that $x \wedge_b y \in \calV$.
Then, $b(u(x),y)=0$.
\end{lemma}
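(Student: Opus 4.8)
The plan is to replay the proof of Lemma~\ref{orthotensorsym} almost verbatim, swapping the $b$-symmetric tensor for the $b$-alternating one and tracking the resulting sign changes. First I would set $v:=x\wedge_b y$. Since $u$ and $v$ both lie in the nilpotent subspace $\calV$, every linear combination of them is nilpotent, so part~(a) of the Trace Orthogonality Lemma (Lemma~\ref{tracecor}) yields $\tr(v\circ u)=0$.

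Next I would compute the composite explicitly: for every $z\in V$,
$$(v\circ u)(z)=b\bigl(y,u(z)\bigr)\,x-b\bigl(x,u(z)\bigr)\,y.$$
The operator $z\mapsto b(y,u(z))\,x$ has range inside $\F x$, so its trace equals that of its restriction to $\F x$, namely $b(y,u(x))$; likewise $z\mapsto -b(x,u(z))\,y$ has range inside $\F y$ and trace $-b(x,u(y))$. Hence $0=\tr(v\circ u)=b(y,u(x))-b(x,u(y))$.

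The one place the argument genuinely differs from the symmetric case is the concluding step: I would invoke that $u$ is $b$-alternating, so that the bilinear form $(a,c)\mapsto b(a,u(c))$ is alternating, giving $b(x,u(y))=-b(y,u(x))$. Substituting this into the trace identity yields $0=2\,b(y,u(x))$, and since $\car\F\neq 2$ we conclude $b(y,u(x))=0$, hence $b(u(x),y)=0$ because $b$ is symmetric or alternating. I do not anticipate any serious obstacle; the only care needed is bookkeeping of the minus sign coming from $x\wedge_b y:z\mapsto b(y,z)\,x-b(x,z)\,y$ and using the alternating (rather than symmetric) property of $(a,c)\mapsto b(a,u(c))$ where the proof of Lemma~\ref{orthotensorsym} used symmetry.
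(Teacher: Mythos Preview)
Your proposal is correct and follows essentially the same approach as the paper's proof: set $v:=x\wedge_b y$, apply the Trace Orthogonality Lemma to get $\tr(v\circ u)=0$, compute this trace as $b(y,u(x))-b(x,u(y))$, and use that $u$ is $b$-alternating to collapse this to $2\,b(y,u(x))$. The paper is simply more terse, writing ``as in the proof of Lemma~\ref{orthotensorsym}'' in place of the explicit trace computation you spell out.
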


\begin{proof}
Set $v:=x \wedge_b y$. By Lemma \ref{tracecor}, we have $\tr(v \circ u)=0$.
Then, as in the proof of Lemma \ref{orthotensorsym}, we obtain
$$0=\tr(v\circ u)=b(y,u(x))-b(x,u(y))=2 \,b(y,u(x))=\pm 2\, b(u(x),y)$$
and we conclude.
\end{proof}

\subsection{The inductive proof}

Remember that $b$ is a non-degenerate symmetric or alternating bilinear form on an $n$-dimensional vector space $V$. We denote by $\nu$ its Witt index.
If $\nu=0$ then $b$ is non-isotropic and we know from Lemma \ref{nonisotropicLemma} that the sole nilpotent element of $\calS_b$ or $\calA_b$ is the zero endomorphism.

Assume now that $\nu>0$, and choose a non-zero isotropic vector $x \in V$.
Let $\calV$ be a nilpotent linear subspace of $\calS_b$ (respectively, of $\calA_b$).
Consider the subspace
$$\calU:=\{u \in \calV : \; u(x)=0\}.$$
Denote by $\overline{b}$ the bilinear form induced by $b$ on $\overline{V}:=\{x\}^\bot/\F x$.
It is symmetric or alternating, and its Witt index equals $\nu-1$.
Let $u \in \calU$. Since $u$ stabilizes $\F x$, it stabilizes its orthogonal complement $\{x\}^\bot$,
whence it induces a nilpotent endomorphism
$\overline{u}$ of $\overline{V}$. The set
$$\calV \modu x:=\{\overline{u} \mid u \in \calU\}$$
is a nilpotent linear subspace of $\calS_{\overline{b}}$ (respectively, of $\calA_{\overline{b}}$).
By Proposition \ref{caractensors}, the kernel of the surjective linear mapping
$$\Phi : u \in \calU \mapsto \overline{u} \in \calV \modu x$$
reads $x \otimes_b L$ (respectively $x \wedge_b L$) for a unique linear subspace $L$ of $\{x\}^\bot$
(respectively, a unique linear subspace $L$ of $\{x\}^\bot$ that contains $x$),
and we have $\dim \Ker \Phi=\dim L$ (respectively, $\dim \Ker \Phi=\dim L-1$).
On the other hand, we set
$$\calV x:=\{u(x) \mid u \in \calV\},$$
so that $\calU$ is the kernel of the surjective linear mapping $u \in \calV \mapsto u(x) \in \calV x$.
Note that $\calV x \cap \F x=\{0\}$ as no element $u \in \calV$ can satisfy $u(x)=x$ (being nilpotent).

Applying the rank theorem twice, we find
$$\dim \calV=\dim L+\dim (\calV x)+\dim (\calV \modu x)$$
(respectively, $\dim \calV=\dim L-1+\dim (\calV x)+\dim (\calV \modu x)$).

Yet, by Lemma \ref{orthotensorsym} (respectively, Lemma \ref{orthotensoralt}), the subspaces $L$ and $\calV x$ are $b$-orthogonal.
Better still, since $x$ is $b$-orthogonal to $L$, we find that $\F x\oplus \calV x$ is $b$-orthogonal to $L$, leading to
$(\dim (\calV x)+1)+\dim L \leq n$, whence
$$\dim (\calV x)+\dim L \leq n-1.$$

Finally, by induction, we have
$$\dim (\calV \modu x) \leq (\nu-1)(n-2-(\nu-1))=(\nu-1)(n-\nu-1)$$
(respectively, $\dim (\calV \modu x) \leq (\nu-1)(n-2-(\nu-1)-1)=(\nu-1)(n-\nu-2)$).

Hence,
$$\dim \calV \leq n-1+(\nu-1)(n-\nu-1)=\nu(n-\nu)$$
(respectively,
$$\dim \calV \leq n-2+(\nu-1)(n-\nu-2)=\nu(n-\nu-1).)$$

\section{The maximal dimension: direct proof using Gerstenhaber's theorem}\label{DirectGerstenSection}

Throughout the section, $\F$ denotes a field with characteristic not $2$, and
$b$ a non-degenerate symmetric or alternating bilinear form on a vector space $V$ (over $\F$)
with finite dimension $n$. Denote by $\nu$ the Witt index of $b$.
Consider an arbitrary maximal totally singular subspace $F$ of $V$, with dimension $\nu$.
Let $\calV$ be a nilpotent subspace of $\calS_b$ (respectively, of $\calA_b$).
Denote by $\calU$ the subspace of all $u \in \calV$ such that $u(F) \subset F$.
Any such $u$ stabilizes $F^\bot$, and hence induces
a nilpotent endomorphism $u_F$ of $F$ and a nilpotent endomorphism
$\overline{u}$ of $F^\bot/F$.
Denote by $\overline{b}$ the bilinear form induced by $b$ on $F^\bot/F$.
Since $F$ is a maximal totally singular subspace for $b$, the quadratic form $x \mapsto \overline{b}(x,x)$
on $F^\bot/F$ is non-isotropic, whence $\overline{u}=0$ for all $u \in \calU$ (by Lemma \ref{nonisotropicLemma}).
In other words, every $u \in \calU$ maps $F^\bot$ into $F$.

Finally, we denote by $\calV_{F}$ the range of the linear mapping
$$\Phi : u \in \calU \mapsto u_F \in \End(F),$$
and we introduce the linear mapping
$$\Psi : u \in \calV \mapsto \bigl(x \mapsto [u(x)]\bigr) \in \calL(F,V/F),$$
where $\calL(F,V/F)$ denotes the space of all linear maps from $F$ to $V/F$.
Note that the kernel of $\Psi$ is precisely $\calU$, and hence the rank theorem applied to $\Phi$ and $\Psi$
yields
$$\dim \calV=\dim \Ker \Phi+\rk \Psi+\dim \calV_{F}.$$

Next, $\calV_F$ is a nilpotent linear subspace of $\End(F)$, and hence by Gerstenhaber's theorem
$$\rk \Phi \leq \dbinom{\nu}{2}.$$
The proof of Theorem \ref{TheoMajo} will then be complete when we establish the following result:

\begin{claim}
If $\calV$ is a subspace of $\calS_b$, then
$$\dim \Ker \Phi+\rk \Psi \leq \nu(n-2\nu)+\dbinom{\nu+1}{2}.$$
If $\calV$ is a subspace of $\calA_b$, then
$$\dim \Ker \Phi+\rk \Psi \leq \nu(n-2\nu)+\dbinom{\nu}{2}.$$
\end{claim}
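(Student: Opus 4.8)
The natural approach is to choose a basis of $V$ adapted to the totally singular subspace $F$, and to describe the matrices in $\calV$ explicitly in that basis. Fix a basis $(e_1,\dots,e_\nu)$ of $F$ and complete it as in the proof of Proposition \ref{dimspecialprop} to an adapted basis $(e_1,\dots,e_\nu,g_1,\dots,g_p,f_1,\dots,f_\nu)$ of $V$, where $p=n-2\nu$, so that $b$ is represented by a matrix of the block shape $S_{\nu,P,Q}$ with $P$ non-isotropic. With respect to the decomposition $V=F\oplus G\oplus H$ (writing $G=\Vect(g_1,\dots,g_p)$ and $H=\Vect(f_1,\dots,f_\nu)$), a nilpotent $u\in\calV$ that maps $F^\bot=F\oplus G$ into $F$ is represented by a block matrix of the form
\[
\begin{bmatrix}
A & B' & B \\
[0]_{p\times\nu} & 0_p & C \\
0_\nu & [0]_{\nu\times p} & A''
\end{bmatrix}
\]
when $u\in\calU$, and more generally a matrix in $\calV$ has a (possibly) non-zero bottom-left block only in the $H\to F$ direction constrained by $b$-symmetry or $b$-alternation. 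The point is that $\Psi(u)$ records exactly the ``new'' data coming from the restriction of $u$ to $H$ composed with projection modulo $F$, i.e.\ the blocks describing $u(f_j) \bmod F$, while $\Ker\Phi$ records the elements of $\calU$ with $u_F=0$, i.e.\ those whose only non-zero blocks lie in the $G\to F$ and $H\to F$ positions.

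First I would make the bound on $\dim\Ker\Phi$ precise. An element $u\in\Ker\Phi$ is, by the discussion preceding the claim, an element of $\calU$ with $u(F)=0$ and $u(F^\bot)\subset F$; writing it in the adapted basis, its matrix has zero blocks everywhere except possibly in the first block-row, and the $b$-symmetry (resp.\ $b$-alternation) relations then pin the $G\to F$ block to be $\epsilon (PC)^T$ (resp.\ $-\epsilon(PC)^T$) where $C$ is the $G$-component of $u|_H$, and force the $H\to F$ block to lie in $\Mats_\nu(\F)$ (resp.\ in $\Mata_\nu(\F)$) after subtracting a fixed correction term coming from $Q$. Hence $\Ker\Phi$ embeds into $\Mat_{p,\nu}(\F)\times\Mats_\nu(\F)$ (resp.\ $\Mat_{p,\nu}(\F)\times\Mata_\nu(\F)$) via $u\mapsto(C,\text{top-right block})$, giving
\[
\dim\Ker\Phi \le \nu p + \binom{\nu+1}{2} \quad\Bigl(\text{resp. } \dim\Ker\Phi \le \nu p + \binom{\nu}{2}\Bigr),
\]
and $\nu p=\nu(n-2\nu)$. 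If this were the whole story we would be done, but $\Psi$ is generically nonzero on $\calU$ as well — so the real work is to show that the contribution of $\rk\Psi$ is already ``paid for'' and does not add anything beyond what the above bound allows.

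The main obstacle — and the key step — is therefore to exploit trace orthogonality to kill the would-be extra dimensions from $\rk\Psi$. Concretely, for each nonzero isotropic vector $x\in F$ and each $y$, the tensor $x\otimes_b y$ (resp.\ $x\wedge_b y$) lies in $\calV$ whenever it is in $\calV$, and Lemma \ref{orthotensorsym} (resp.\ Lemma \ref{orthotensoralt}) forces $b(u(x),y)=0$ for all $u\in\calV$. I would run this for $x$ ranging over the basis vectors $e_1,\dots,e_\nu$ of $F$: writing $\calV e_i=\{u(e_i): u\in\calV\}$ and $L_i$ for the space of $y$ with $x\otimes_b y\in\calV$ (resp.\ $x\wedge_b y\in\calV$), we get that $\calV e_i$ is $b$-orthogonal to $L_i$, and — crucially — $L_i$ always contains $F$ itself (since any $e_j\otimes_b e_k$, resp.\ $e_j\wedge_b e_k$, is nilpotent of rank $\le 2$ and lies in $\calV$ provided it lies in the relevant structured space, which one checks it does when one subtracts an element of $\calU$; more carefully, one argues as in the inductive proof that $\Ker\Phi$ already realizes a large piece of these tensors). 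Consequently $\calV e_i\subset F^\bot$ for every $i$, i.e.\ $\Psi(u)$ lands in $F^\bot/F$ rather than all of $V/F$: this cuts the target of $\Psi$ from $\calL(F,V/F)$, of dimension $\nu(n-\nu)$, down to $\calL(F,F^\bot/F)$, of dimension $\nu p$. Combining $\rk\Psi\le\nu p$ with the previous paragraph's bound on $\dim\Ker\Phi$ would double-count the $\nu p$, so the final delicate point is to show instead that $\dim\Ker\Phi+\rk\Psi\le \nu p+\binom{\nu+1}{2}$ (resp.\ $+\binom{\nu}{2}$) as a single estimate: this is done by observing that $\Ker\Phi$ and the image of $\Psi|_\calU$ together describe, in the adapted basis, only the $G\to F$ block ($\le\nu p$ dimensions total, shared), the $H\to(G\oplus H)$ data (captured by $\rk\Psi$ restricted appropriately), and the symmetric/alternating $H\to F$ block ($\binom{\nu+1}{2}$ resp.\ $\binom{\nu}{2}$), with the $b$-symmetry relations linking the $G\to F$ block to the $H\to G$ block so that they are not independent. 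Making this linkage airtight — precisely, that the off-diagonal blocks controlled by $b$-symmetry collapse the naive count $\nu p + \nu p + \binom{\nu+1}{2}$ down to $\nu p + \binom{\nu+1}{2}$ — is where I expect to spend essentially all the effort; it is a bookkeeping argument over the block structure of $S_{\nu,P,Q}$-symmetric matrices, powered at each step by the orthogonality Lemmas \ref{orthotensorsym} and \ref{orthotensoralt}.
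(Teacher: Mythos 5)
Your proposal correctly sets up the matrix picture and identifies that a naive combination of $\dim\Ker\Phi \le \nu p+\binom{\nu+1}{2}$ with $\rk\Psi \le \nu p$ double-counts, but it never actually resolves that tension; the final step is only gestured at as ``a bookkeeping argument over the block structure, powered at each step by Lemmas \ref{orthotensorsym} and \ref{orthotensoralt}.'' There is in fact a genuine gap here, and two concrete problems along the way. First, the claim $\rk\Psi \le \nu p$ (equivalently, $\Psi(u)$ lands in $\calL(F,F^\bot/F)$) is not true for a general nilpotent subspace: your justification requires $e_j\otimes_b e_k$ (resp.\ $e_j\wedge_b e_k$) to lie in $\calV$ so that $L_i\supset F$, but membership in $\calV$ is exactly what is \emph{not} given — these tensors are automatically in $\calS_b$ (resp.\ $\calA_b$), not in $\calV$. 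Second, even granting $\rk\Psi\le\nu p$, adding it to $\dim\Ker\Phi\le\nu p+\binom{\nu+1}{2}$ and to Gerstenhaber's bound $\rk\Phi\le\binom{\nu}{2}$ yields $\dim\calV\le 2\nu p+\nu^2$, which exceeds $\nu(n-\nu)$ whenever $p>0$; so the gap is not merely a matter of polish but of a missing idea.

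The idea you are missing is to bound $\dim\Ker\Phi$ and $\rk\Psi$ \emph{jointly}, not separately, by placing the two objects as subspaces of the \emph{same} ambient space and showing they are orthogonal there. Concretely: the ``bottom-left'' data $\bigl(C_1(u),D_1(u)\bigr)$ of a general $u\in\calV$ and the ``top-right'' data $\bigl(C_2(v),D_2(v)\bigr)$ of $v\in\Ker\Phi$ both lie in $\Mat_{p,\nu}(\F)\times\Mats_\nu(\F)$ (resp.\ $\times\Mata_\nu(\F)$); the map $u\mapsto(C_1(u),D_1(u))$ factors through $\Psi$ and is injective on $\im\Psi$, and $v\mapsto(C_2(v),D_2(v))$ is injective on $\Ker\Phi$. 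The Trace Orthogonality Lemma (Lemma \ref{tracecor}), applied directly to pairs $(u,v)\in\calV\times\Ker\Phi$ — not through the tensor Lemmas \ref{orthotensorsym}/\ref{orthotensoralt} — gives $\tr(uv)=0$, which in block form reads $2\epsilon\tr\bigl(C_1(u)^T P\,C_2(v)\bigr)+\tr\bigl(D_1(u)D_2(v)\bigr)=0$. Since $P$ is invertible and $\car\F\neq 2$, this pairing is non-degenerate, so the images of the two maps are orthogonal complements in a space of dimension $\nu p+\binom{\nu+1}{2}$, giving the desired inequality in one step. This is precisely the single estimate you anticipated needing; the rest of your setup is consistent with the intended argument.
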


\begin{proof}
We shall use the Trace Orthogonality Lemma. It is efficient here to think in matrix terms, just like in Section \ref{matrixExamples}.
Set $p:=n-2\nu$.
We choose a basis $(e_1,\dots,e_n)$ of $V$ that is strongly adapted to a complete flag of $F$.
In that basis, the matrix of $b$ reads
$$\begin{bmatrix}
0_\nu & [0]_{\nu \times p} & I_\nu \\
[0]_{p \times \nu} & P & [0]_{p \times \nu} \\
\epsilon I_\nu  & [0]_{\nu \times p} & 0_\nu
\end{bmatrix}$$
for some $\epsilon \in \{1,-1\}$ and some non-isotropic matrix $P \in \GL_p(\F)$ that is
either symmetric or alternating. However, in the latter case $p=0$, hence in any case $P$ is symmetric (possibly void).

Assume first that $\calV \subset \calS_b$.
Every $u \in \calV$ has its matrix in $\bfB$ of the form
$$\begin{bmatrix}
[?]_{\nu \times \nu} & [?]_{\nu \times p} & [?]_{\nu \times \nu} \\
C_1(u) & [?]_{p \times p} & [?]_{p \times \nu} \\
D_1(u) & \epsilon (PC_1(u))^T & [?]_{\nu \times \nu}
\end{bmatrix} \quad \text{with $C_1(u) \in \Mat_{p,\nu}(\F)$ and $D_1(u) \in \Mats_\nu(\F)$.}$$
Every $v \in \Ker \Phi$ has its matrix in $\bfB$ of the form
$$\begin{bmatrix}
0_\nu & \epsilon (PC_2(v))^T & D_2(v) \\
[0]_{p \times \nu} & 0_p & C_2(v) \\
0_\nu  & [0]_{\nu \times p} & 0_\nu
\end{bmatrix} \quad \text{with $C_2(v) \in \Mat_{p,\nu}(\F)$ and $D_2(v) \in \Mats_\nu(\F)$.}$$
In particular, the mapping
$$F_1 : u \in \calV \mapsto \bigl(C_1(u),D_1(u)\bigr) \in \Mat_{p,\nu}(\F) \times \Mats_\nu(\F)$$
induces an isomorphism from $\im \Psi$ to $\im F_1$, and the mapping
$$F_2 : v \in \Ker \Phi \mapsto \bigl(C_2(v),D_2(v)\bigr) \in \Mat_{p,\nu}(\F) \times \Mats_\nu(\F)$$
induces an isomorphism from $\Ker \Phi$ to $\im F_2$.
Finally, for all $u \in \calV$ and all $v \in \Ker \Phi$, the Trace Orthogonality Lemma (Lemma \ref{tracecor}) yields $\tr(uv)=0$, which, as $P$ is symmetric, reads
$$2\epsilon \tr\bigl(C_1(u)^T P C_2(v)\bigr)+\tr\bigl(D_1(u)D_2(v)\bigr)=0.$$
It follows that $\im F_1$ and $\im F_2$ are orthogonal for the symmetric bilinear form
$$\bigl((C,D),(C',D')\bigr) \in \bigl(\Mat_{p,\nu}(\F) \times \Mats_\nu(\F)\bigr)^2 \mapsto
2\epsilon \tr(C^TPC')+\tr(DD').$$
Since $P$ is invertible and the characteristic of $\F$ is not $2$, this bilinear form is non-degenerate, and one concludes that
$$\rk F_1+\rk F_2 \leq \dim\bigl(\Mat_{p,\nu}(\F) \times \Mats_\nu(\F)\bigr)=\nu(n-2\nu)+\dbinom{\nu+1}{2},$$
which yields the first claimed result.

Let us now consider the case when $\calV$ is a subspace of $\calA_b$.
Any $u \in \calV$ has its matrix in the basis $\bfB$ of the form
$$\begin{bmatrix}
[?]_{\nu \times \nu} & [?]_{\nu \times p} & [?]_{\nu \times \nu} \\
C_1(u) & [?]_{p \times p} & [?]_{p \times \nu} \\
D_1(u) & -\epsilon (PC_1(u))^T & [?]_{\nu \times \nu}
\end{bmatrix} \quad \text{where $C_1(u) \in \Mat_{p,\nu}(\F)$ and $D_1(u) \in \Mata_\nu(\F)$.}$$
Any $v \in \Ker \Phi$ has its matrix in $\bfB$ of the form
$$\begin{bmatrix}
0_\nu & -\epsilon (PC_2(v))^T & D_2(v) \\
[0]_{p \times \nu} & 0_p & C_2(v) \\
0_\nu  & [0]_{\nu \times p} & 0_\nu
\end{bmatrix} \quad \text{where $C_2(v) \in \Mat_{p,\nu}(\F)$ and $D_2(v) \in \Mata_\nu(\F)$.}$$
From there, the proof is essentially similar to the previous one, replacing $\Mats_\nu(\F)$
with $\Mata_\nu(\F)$; here the relevant symmetric non-degenerate bilinear form on
$\Mat_{p,\nu}(\F) \times \Mata_\nu(\F)$ is
$$\bigl((C,D),(C',D')\bigr) \in \bigl(\Mat_{p,\nu}(\F) \times \Mata_\nu(\F)\bigr)^2 \mapsto
-2\epsilon \tr(C^TPC')+\tr(DD').$$
\end{proof}

Hence, we have proved Theorem \ref{TheoMajo}.

\section{The maximal dimension: direct self-contained proof}\label{DirectMORSection}

In this section, we give a direct proof of Theorem \ref{TheoMajo}, and
from this proof we derive partial results on the structure of spaces with maximal dimension under mild cardinality assumptions on the underlying field. We start by tackling symmetric endomorphisms in details (Section \ref{DirectSymSection}); the adaptation
to alternating endomorphisms will be briefly discussed in Section \ref{DirectAltSection}.

\subsection{Symmetric endomorphisms}\label{DirectSymSection}

Let $b$ be a non-degenerate symmetric or alternating bilinear form on a vector space $V$ with finite dimension $n$, over a field $\F$
with characteristic not $2$. Denote by $\nu$ the Witt index of $b$.
Let $\calV$ be a nilpotent linear subspace of $\calS_b$.
We fix a maximal partially complete $b$-singular flag $\calF$ of $V$, and we set $p:=n-2\nu$.
We take a basis $\bfB$ of $V$ that is strongly adapted to $\calF$ (see Section \ref{geomExamples}).

In that basis, the matrix of $b$ reads
$$\begin{bmatrix}
0_\nu & [0]_{\nu \times p} & I_\nu \\
[0]_{p\times \nu} & P & [0]_{p \times \nu} \\
\epsilon I_\nu & [0]_{\nu \times p}  & 0_\nu
\end{bmatrix}$$
where $\varepsilon:=1$ if $b$ is symmetric, $\epsilon:=-1$ if $b$ is alternating, and $P \in \GL_p(\F)$
is non-isotropic, and symmetric or alternating. Just like in Section \ref{DirectGerstenSection},
we note that $P$ is actually symmetric (if $b$ is alternating it is the $0$-by-$0$ matrix).
For every $u \in \calS_b$, the matrix of $u$ in $\bfB$ reads
$$M(u)=\begin{bmatrix}
A(u) & \epsilon(PC_2(u))^T & D_2(u) \\
C_1(u) & P^{-1} S(u) & C_2(u) \\
D_1(u) & \epsilon(PC_1(u))^T & \epsilon A(u)^T
\end{bmatrix}$$
where $A(u) \in \Mat_\nu(\F)$, $D_1(u)$, $D_2(u)$ belong to $\Mats_\nu(\F)$, $C_1(u)$ and $C_2(u)$ belong to $\Mat_{p,\nu}(\F)$, and $S(u) \in \Mats_p(\F)$.

As we have seen in Section \ref{geomExamples}, the elements of $\WS_{b,\calF}$ are exactly the endomorphisms $u \in \calS_b$
for which $C_1(u)=0$, $D_1(u)=0$, $S(u)=0$ and $A(u) \in \NT_\nu(\F)$.

To any $u \in \calS_b$, we assign the strictly upper-triangular matrix $I(u) \in \NT_\nu(\F)$ defined as follows:
$$I(u)_{i,j}=\begin{cases}
A(u)_{j,i} & \text{if $i<j$} \\
0 & \text{otherwise.}
\end{cases}$$
Every $u$ in the kernel of
$$\chi : v \in \calV \mapsto \bigl(I(v),C_1(v),D_1(v)\bigr) \in \NT_\nu(\F) \times \Mat_{p,\nu}(\F) \times \Mats_\nu(\F)$$
is such that $A(u)$ is nilpotent and upper-triangular, and hence strictly upper-triangular. It follows that
 $\calV \cap \WS_{b,\calF}$ is precisely the kernel of $\chi$.

Now, let $u \in \WS_{b,\calF}$ and $v \in \calS_b$.
Noting that
$$M(u)=\begin{bmatrix}
A(u) & \epsilon(PC_2(u))^T & D_2(u) \\
[0]_{p \times \nu} & 0_p & C_2(u) \\
0_\nu & [0]_{\nu \times p} & \epsilon A(u)^T
\end{bmatrix},$$
we use the fact that $P$ is symmetric to obtain
$$\tr(v  u)=2\tr\bigl(A(u)A(v)\bigr)+2\epsilon \tr\bigl(C_2(u)^TPC_1(v)\bigr)+\tr\bigl(D_2(u)D_1(v)\bigr)=0.$$
Using the Trace Orthogonality Lemma (Lemma \ref{tracecor}), we deduce that
the range of $\chi$ is $c$-orthogonal to the direct image of $\calV  \cap \WS_{b,\calF}$ under
$$\chi' : u \in \WS_{b,\calF} \mapsto (A(u),C_2(u),D_2(u)) \in \NT_\nu(\F) \times \Mat_{p,\nu}(\F) \times \Mats_\nu(\F)$$
for the symmetric bilinear form
$$c : \begin{cases}
\bigl(\NT_\nu(\F) \times \Mat_{p,\nu}(\F) \times \Mats_\nu(\F)\bigr)^2 & \longrightarrow \F \\
\bigl((N,C,D),(N',C',D')\bigr) & \longmapsto 2\tr(N^TN')+2\epsilon \tr(C^TPC')+\tr(DD').
\end{cases}$$
Using the fact that $P$ is invertible, one checks that $c$ is non-degenerate, and one deduces that
\begin{align*}
\rk \chi+\dim \bigl(\chi'(\calV \cap \WS_{b,\calF})\bigr) & \leq
\dim\bigl(\NT_\nu(\F) \times \Mat_{p,\nu}(\F) \times \Mats_\nu(\F)\bigr) \\
& \leq \dbinom{\nu}{2}+p\nu+\dbinom{\nu+1}{2}=\nu(n-\nu).
\end{align*}
Yet, by the rank theorem,
$$\dim \calV=\rk \chi+\dim \Ker \chi=\rk \chi+\dim(\calV \cap \WS_{b,\calF}),$$
all the while
$$\dim \bigl(\chi'(\calV \cap \WS_{b,\calF})\bigr)=\dim (\calV \cap \WS_{b,\calF})$$
because $\chi'$ is obviously injective.
We conclude that
$$\dim \calV =\rk \chi+\dim \bigl(\chi'\bigl(\calV \cap \WS_{b,\calF}\bigr)\bigr) \leq \nu(n-\nu).$$

We can also draw a powerful result from the above proof in the case when $\calV$ has the critical dimension:

\begin{lemma}\label{doubleortholemma}
Let $\calV$ be a nilpotent subspace of $\calS_b$ with dimension $\nu(n-\nu)$.
Let $v \in \calS_b$ be nilpotent and such that $\forall u \in \calV, \; \tr(uv)=0$.
Then, $v \in \calV$.
\end{lemma}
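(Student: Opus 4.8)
The plan is to re-run the argument in the proof above with a flag tailored to $v$, and then to exploit that, for a space $\calV$ of the critical dimension, the bound established there is an equality. Since $v$ is a nilpotent $b$-symmetric endomorphism, Lemma~\ref{stableflaglemma} furnishes a maximal partially complete $b$-singular flag $\calF$ of $V$ that is stable under $v$; in particular $v\in\WS_{b,\calF}$. Fix a basis $\bfB$ of $V$ strongly adapted to $\calF$ and keep all the notation of the proof above: the block-entry maps $A,C_1,C_2,D_1,D_2,S$ on $\calS_b$, the non-isotropic symmetric matrix $P$, the maps $\chi$ and $\chi'$, and the non-degenerate symmetric bilinear form $c$ on $W:=\NT_\nu(\F)\times\Mat_{p,\nu}(\F)\times\Mats_\nu(\F)$. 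It is convenient to also write $\chi$ for the analogous linear map $\calS_b\to W$ defined by the same formula, whose restriction to $\calV$ is the map $\chi$ of that proof.

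Because $\dim\calV=\nu(n-\nu)=\dim W$, the bound $\rk\chi+\dim\bigl(\chi'(\calV\cap\WS_{b,\calF})\bigr)\leq\dim W$ from the proof above is an equality. As $c$ is non-degenerate on $W$ and the subspaces $\im\chi$ and $\chi'(\calV\cap\WS_{b,\calF})$ are $c$-orthogonal, this equality of dimensions forces them to be each other's $c$-orthogonal complement; in particular
$$\chi'\bigl(\calV\cap\WS_{b,\calF}\bigr)=\bigl(\im\chi\bigr)^{\bot_c}.$$

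Now I reuse the computation carried out in the proof above: for every $w\in\WS_{b,\calF}$ and every $u\in\calS_b$, the symmetry of $P$ together with the identity $\tr\bigl(A(w)^TI(u)\bigr)=\tr\bigl(A(w)A(u)\bigr)$ (valid because $A(w)$ is strictly upper-triangular) gives
$$\tr(uw)=2\tr\bigl(A(w)A(u)\bigr)+2\epsilon\tr\bigl(C_2(w)^TPC_1(u)\bigr)+\tr\bigl(D_2(w)D_1(u)\bigr)=c\bigl(\chi'(w),\chi(u)\bigr).$$
Taking $w:=v$ and using that $\tr(uv)=0$ for all $u\in\calV$, we obtain $c\bigl(\chi'(v),\chi(u)\bigr)=0$ for all $u\in\calV$, i.e.\ $\chi'(v)\in(\im\chi)^{\bot_c}=\chi'(\calV\cap\WS_{b,\calF})$. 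Hence $\chi'(v)=\chi'(w)$ for some $w\in\calV\cap\WS_{b,\calF}$, and since $v$ and $w$ both lie in $\WS_{b,\calF}$, where $\chi'$ is injective, we conclude $v=w\in\calV$.

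The delicate point is the very first step: one must choose $\calF$ stable under $v$ — which is exactly what Lemma~\ref{stableflaglemma} provides — so that $v\in\WS_{b,\calF}$ and the bilinear identity above is available with $w=v$. Dropping the nilpotence of $v$, or choosing an arbitrary flag, would only tell us that $v$ is trace-orthogonal to $\calV$; but $\calV$ is already totally isotropic for the trace form on $\calS_b$, so that alone is far from enough. Everything else is bookkeeping: the rank theorem, the injectivity of $\chi'$, and the elementary fact that $c$-orthogonal subspaces of complementary dimension in a non-degenerate space are mutual orthogonal complements.
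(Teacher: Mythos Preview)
Your proof is correct and follows essentially the same approach as the paper's: choose $\calF$ stable under $v$ via Lemma~\ref{stableflaglemma}, observe that the critical dimension forces $\chi'(\calV\cap\WS_{b,\calF})=(\im\chi)^{\bot_c}$, and then use the trace identity to place $\chi'(v)$ in that orthogonal complement before invoking the injectivity of $\chi'$. Your write-up makes the identity $\tr(uw)=c(\chi'(w),\chi(u))$ and the orthogonal-complement step more explicit than the paper does, but the argument is the same.
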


\begin{proof}
By Lemma \ref{stableflaglemma}, there is a maximal partially complete $b$-singular flag
$\calF$ of $V$ such that $v \in \WS_{b,\calF}$.
Taking a basis $\bfB$ as in the above, and introducing the linear mappings $\chi$ and $\chi'$
together with the bilinear form $c$ attached to that basis, we get that $\chi'(v)$ is $c$-orthogonal
to the range of $\chi$. However, as $\dim \calV=\nu(n-\nu)$, we get from the above proof
that $\chi'(\calV \cap \WS_{b,\calF})$ is the orthogonal complement of $\im \chi$ under
$c$. It follows that $\chi'(v) \in \chi'(\calV \cap \WS_{b,\calF})$. Since $\chi'$
is injective this yields $v \in \calV$.
\end{proof}

Now, we distinguish between two cases:
\begin{itemize}
\item If $b$ is symmetric, then $\calS_b$ is stable under squares because, for all $u \in \calS_b$,
$$\forall (x,y)\in V^2, \; b\bigl(x,u^2(y)\bigr)=b\bigl(u(x),u(y)\bigr)=b\bigl(u^2(x),y\bigr)=b\bigl(y,u^2(x)\bigr).$$
More generally, $\calS_b$ is stable under
any positive power.
\item If $b$ is alternating, then $\calS_b$ is stable under cubes because, for all $u \in \calS_b$,
$$\forall (x,y)\in V^2, \; b\bigl(x,u^3(y)\bigr)=(-1)^3\, b\bigl(u^3(x),y\bigr)=b\bigl(y,u^3(x)\bigr)$$
(more generally, $\calS_b$ is stable under any odd power).
\end{itemize}
Combining this with the Trace Orthogonality Lemma (Lemma \ref{tracecor}) and with Lemma \ref{doubleortholemma}, we recover stability results
for spaces with the maximal dimension:

\begin{lemma}\label{stabsym}
Let $\calV$ be a nilpotent subspace of $\calS_b$ with dimension $\nu(n-\nu)$.
\begin{enumerate}[(a)]
\item If $b$ is symmetric, then $u^2 \in \calV$ for all $u \in \calV$.
\item If $|\F|>3$, then $u^3 \in \calV$ for all $u \in \calV$.
\end{enumerate}
\end{lemma}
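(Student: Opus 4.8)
The plan is to obtain both statements as a straightforward synthesis of three ingredients that are already in place: the stability of $\calS_b$ under squares (resp.\ under cubes), the Trace Orthogonality Lemma (Lemma \ref{tracecor}), and the double orthogonality principle of Lemma \ref{doubleortholemma}. The elementary observation to set up at the start is that, for a fixed $u \in \calV$ and an arbitrary $u' \in \calV$, every endomorphism $u + \lambda u'$ with $\lambda \in \F$ again belongs to $\calV$ and is hence nilpotent, simply because $\calV$ is a linear subspace consisting of nilpotent endomorphisms.

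For point (a), I would take $u \in \calV$ and note that $u^2$ is nilpotent and, since $b$ is symmetric, $u^2 \in \calS_b$ by the stability of $\calS_b$ under squares. By Lemma \ref{doubleortholemma} it then suffices to prove that $\tr(u^2 u') = 0$ for every $u' \in \calV$. Fixing such a $u'$, the endomorphism $u + \lambda u'$ is nilpotent for all $\lambda \in \F$, and since $\car \F \neq 2$ forces $|\F| \geq 3$, part (b) of the Trace Orthogonality Lemma applied with $k = 2$ yields exactly $\tr(u^2 u') = 0$. This gives $u^2 \in \calV$.

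For point (b), the argument is formally the same with the exponent raised to $3$: whatever the type of $b$, the space $\calS_b$ is stable under cubes (under all odd powers in the alternating case), so $u^3 \in \calS_b$ and $u^3$ is nilpotent; for a fixed $u' \in \calV$ the hypothesis $|\F| > 3$ furnishes at least four scalars $\lambda$ for which $u + \lambda u'$ is nilpotent, so Lemma \ref{tracecor}(b) with $k = 3$ gives $\tr(u^3 u') = 0$, and Lemma \ref{doubleortholemma} then forces $u^3 \in \calV$.

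I do not anticipate a genuine obstacle here, since the substantive work has been done in the preceding lemmas; the only point demanding attention is the counting of scalars, namely that $\car \F \neq 2$ already supplies the three values of $\lambda$ needed for the case $k = 2$ (so that (a) holds with no cardinality hypothesis), while the case $k = 3$ really does require a fourth value, whence the assumption $|\F| > 3$ in (b). It is also worth recording why (b) cannot be strengthened to a square in the alternating case: there $\calS_b$ need not be stable under squares, so $u^2$ need not even lie in $\calS_b$, let alone in $\calV$.
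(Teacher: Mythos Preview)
Your proposal is correct and follows essentially the same approach as the paper: the paper explicitly states that the lemma is obtained by combining the stability of $\calS_b$ under squares (when $b$ is symmetric) or cubes (in general) with the Trace Orthogonality Lemma and Lemma \ref{doubleortholemma}, which is precisely what you do. Your counting of the scalars needed in each case is also exactly right.
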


\subsection{Alternating endomorphisms}\label{DirectAltSection}

Here, the proof is an easy adaptation of the above one. We simply point out the main differences:
\begin{itemize}
\item The space $\Mats_\nu(\F)$ must be replaced with $\Mata_\nu(\F)$.
\item The constant $\epsilon$ must be replaced with $-\epsilon$.
\item The symmetric matrix $S(u)$ is replaced with an alternating matrix.
\item The form $c$ is replaced with the symmetric bilinear form on
$\NT_\nu(\F) \times \Mat_{p,\nu}(\F) \times \Mata_\nu(\F)$ defined as follows:
$$c'\bigl((N,C,D),(N',C',D')\bigr)=2\tr(N^TN')-2\epsilon \tr\bigl(C^TPC'\bigr)+\tr(DD').$$
\end{itemize}

From there, we obtain, for every nilpotent subspace $\calV$ of $\calA_b$, the inequality
$$\dim \calV \leq \dbinom{\nu}{2}+\nu(n-2\nu)+\dbinom{\nu}{2}=\nu(n-\nu-1).$$

Moreover, by using the same line of reasoning as in the end of the previous section, we obtain
the following result on spaces having the maximal dimension:

\begin{lemma}\label{stabalt}
Let $\calV$ be a nilpotent subspace of $\calA_b$ with dimension $\nu(n-\nu-1)$.
\begin{enumerate}[(a)]
\item If $b$ is alternating, then $u^2 \in \calV$ for all $u \in \calV$.
\item If $|\F|>3$, then $u^3 \in \calV$ for all $u \in \calV$.
\end{enumerate}
\end{lemma}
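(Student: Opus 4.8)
The plan is to run, for $\calA_b$, the very same endgame that produces Lemma~\ref{stabsym} for $\calS_b$, using the matrix dictionary set up in Section~\ref{DirectAltSection}. The first step is to record the $\calA_b$-counterpart of Lemma~\ref{doubleortholemma}: if $\calV$ is a nilpotent subspace of $\calA_b$ of maximal dimension $\nu(n-\nu-1)$ and $v\in\calA_b$ is nilpotent with $\tr(uv)=0$ for every $u\in\calV$, then $v\in\calV$. I would prove this verbatim as Lemma~\ref{doubleortholemma}: using Lemma~\ref{stableflaglemma}, choose a maximal partially complete $b$-singular flag $\calF$ with $v\in\WA_{b,\calF}$, pick a strongly adapted basis, and form the maps $\chi$, $\chi'$ and the bilinear form $c'$ of Section~\ref{DirectAltSection}. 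Maximality of $\dim\calV$ forces the inequality of that section to be an equality, so $\im\chi$ and $\chi'(\calV\cap\WA_{b,\calF})$ are mutually $c'$-orthogonal of complementary dimension in a space on which $c'$ is non-degenerate; hence $\chi'(\calV\cap\WA_{b,\calF})$ is exactly the $c'$-orthogonal complement of $\im\chi$. Since $\chi'(v)$ lies in that orthogonal complement, it lies in $\chi'(\calV\cap\WA_{b,\calF})$, and injectivity of $\chi'$ gives $v\in\calV$.

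The second step is the power-stability bookkeeping. For $u\in\calA_b$ one has $b(u(x),y)=-\epsilon\,b(x,u(y))$ for all $(x,y)\in V^2$, with $\epsilon=1$ if $b$ is symmetric and $\epsilon=-1$ if $b$ is alternating, and, since $\car\F\neq2$, conversely this relation characterizes $\calA_b$. Iterating gives $b(u^m(x),y)=(-\epsilon)^m\,b(x,u^m(y))$; thus $u^3$ always obeys the defining relation of $\calA_b$ (because $(-\epsilon)^3=-\epsilon$), while $u^2$ obeys $b(u^2(x),y)=b(x,u^2(y))$, which puts $u^2$ in $\calA_b$ exactly when $-\epsilon=1$, i.e.\ when $b$ is alternating. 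As powers of a nilpotent endomorphism are nilpotent, we conclude that $\calA_b$ is stable under cubes unconditionally, and under squares when $b$ is alternating.

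The third step is the trace computation. Fix $u\in\calV$. For every $w\in\calV$ and every $\lambda\in\F$, the element $u+\lambda w$ lies in $\calV$ and is therefore nilpotent; since $\car\F\neq2$ we have $|\F|\geq3$, so Lemma~\ref{tracecor}(b) applied with $k=2$ yields $\tr(u^2 w)=0$ for all $w\in\calV$, and if moreover $|\F|>3$ there are at least four admissible values of $\lambda$, so Lemma~\ref{tracecor}(b) with $k=3$ yields $\tr(u^3 w)=0$ for all $w\in\calV$. Combining the three steps: in case (a), $b$ alternating makes $u^2$ a nilpotent element of $\calA_b$ that is trace-orthogonal to $\calV$, whence $u^2\in\calV$ by step one; in case (b), $u^3$ is a nilpotent element of $\calA_b$ trace-orthogonal to $\calV$, whence $u^3\in\calV$. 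I do not expect a serious obstacle here: everything is a transcription of Section~\ref{DirectAltSection} together with the argument behind Lemma~\ref{stabsym}, and the only points demanding attention are the elementary sign computation showing that $u^2\in\calA_b$ genuinely requires $b$ to be alternating, and the non-degeneracy of $c'$ underpinning step one (already observed in Section~\ref{DirectAltSection}).
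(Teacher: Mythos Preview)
Your proposal is correct and follows essentially the same approach as the paper: the paper explicitly says to use ``the same line of reasoning as in the end of the previous section,'' meaning the $\calA_b$-analogue of Lemma~\ref{doubleortholemma} combined with the Trace Orthogonality Lemma and the observation that $\calA_b$ is stable under cubes in all cases and under squares when $b$ is alternating. Your three steps reproduce exactly this argument, with the sign bookkeeping and the field-size counting made explicit.
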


Here, the difference with the symmetric case comes from the observation that
$\calA_b$ is stable under cubes if $b$ is symmetric, and stable under any power if $b$ is alternating.

\section{Spaces with the maximal dimension}\label{MaxDimSection}

Here, we prove Theorems \ref{EqualitySymSymtheo} and \ref{EqualityAltAlttheo}.
The strategies are essentially similar, so we will give full details only for the former. Throughout, $\F$ denotes a field with characteristic different from $2$.

\subsection{Spaces of symmetric endomorphisms for a symmetric form}

Our first step is the following result:

\begin{prop}\label{EqualitySymSymprop}
Let $b$ be a non-degenerate symmetric bilinear form on a vector space $V$ over $\F$ with finite dimension $n$. Denote by $\nu$ the Witt index of $b$, and let $\calV$ be a nilpotent linear subspace of $\calS_b$ with dimension $\nu(n-\nu)$. Then, $\calV$ is triangularizable.
\end{prop}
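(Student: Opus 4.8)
The goal is to show that a nilpotent subspace $\calV \subset \calS_b$ of maximal dimension $\nu(n-\nu)$ is simultaneously triangularizable, i.e.\ there is a complete flag of $V$ stabilized by every element of $\calV$. The natural route is to invoke the Jacobson/Radjavi triangularization theorem: a set of nilpotent endomorphisms is simultaneously triangularizable as soon as it is closed under multiplication (or, more conveniently here, forms a Lie algebra, or — in the spirit of the Mathes--Omladi\v{c}--Radjavi approach — is closed under squares and is a linear space, from which one gets a common invariant hyperplane and then induces downward). So the plan is: first upgrade the stability-under-squares property already available from Lemma~\ref{stabsym}(a) to a statement strong enough to trigger triangularizability, then run an induction on $\dim V$.

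\textbf{Step 1: a common isotropic eigenvector.} Since $b$ is symmetric and non-degenerate with Witt index $\nu>0$ (the case $\nu=0$ being trivial by Lemma~\ref{nonisotropicLemma}, forcing $\calV=\{0\}$ and $n\le 1$), I first want to produce a non-zero isotropic vector $x$ with $u(x)=0$ for all $u\in\calV$, or more modestly a common eigenvector lying in a totally singular line. Here Lemma~\ref{stabsym}(a) is the crucial input: $\calV$ is a linear space of nilpotents closed under squares, hence (polarizing $u\mapsto u^2$) closed under the Jordan product $(u,v)\mapsto uv+vu$; combined with the Trace Orthogonality Lemma this is exactly the setup of the Mathes--Omladi\v{c}--Radjavi argument, which yields a common eigenvector for $\calV$, necessarily with eigenvalue $0$. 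I would then want to argue this common eigenvector (or some vector in the space it generates under the structure) can be taken isotropic; if not directly, I would instead extract a common invariant hyperplane of the form $\{x\}^\bot$ and proceed.

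\textbf{Step 2: descend along an isotropic line.} Given a common isotropic null vector $x$, every $u\in\calV$ stabilizes $\F x$ and hence, by Lemma~\ref{stableortholemma}(a), also $\{x\}^\bot$; it therefore induces $\overline u$ on $\overline V:=\{x\}^\bot/\F x$, which carries the induced non-degenerate symmetric form $\overline b$ of Witt index $\nu-1$, and $\overline u$ is $\overline b$-symmetric and nilpotent. The image $\overline{\calV}$ is a nilpotent subspace of $\calS_{\overline b}$. One then checks, exactly as in the inductive proof of Theorem~\ref{TheoMajo} in Section~\ref{InductiveSection} (via the rank-theorem bookkeeping $\dim\calV=\dim L+\dim(\calV x)+\dim(\calV\bmod x)$ together with the $b$-orthogonality of $L$ and $\F x\oplus\calV x$), that $\dim\calV=\nu(n-\nu)$ forces $\dim\overline{\calV}=(\nu-1)\bigl((n-2)-(\nu-1)\bigr)$, i.e.\ $\overline{\calV}$ again has the critical dimension for $\overline b$. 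By the induction hypothesis $\overline{\calV}$ is triangularizable, so there is a complete flag of $\overline V$ stable under $\overline{\calV}$; pulling it back through $\F x\subset\{x\}^\bot\subset V$ and inserting $\{0\}\subset\F x$ at the bottom and any completion above $\{x\}^\bot$ (using that each $u$ maps $V$ into $\{x\}^\bot$, as $u(x)=0$ gives $\im u\subset\{x\}^\bot$) produces a complete flag of $V$ stable under $\calV$. The base case $n\le 1$ is trivial.

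\textbf{Main obstacle.} The delicate point is Step~1: producing the \emph{isotropic} common null vector, rather than just any common eigenvector. A generic common eigenvector need not be isotropic, and if it is anisotropic the descent in Step~2 breaks down. The resolution I expect is to argue within $\im u$ for a well-chosen $u$: for any nilpotent $u\in\calV$, $\Ker u=(\im u)^\bot$ by Lemma~\ref{stableortholemma}(b), so $\im u\cap\Ker u=\im u\cap(\im u)^\bot\ne\{0\}$ is a totally singular set meeting $\Ker u$ — i.e.\ $u$ automatically has an isotropic vector in its kernel. The work is to make such a vector \emph{common} to all of $\calV$ simultaneously; here I would lean on the fact that $\calV$ being closed under squares makes the MOR machinery applicable to the restriction of $\calV$ to a suitable totally singular invariant subspace, forcing the common eigenvector to be found there. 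Getting this localization argument clean — and handling the bookkeeping that guarantees the induced space on the quotient stays at critical dimension — is where the real effort lies; everything after that is the routine rank-theorem computation already rehearsed in Section~\ref{InductiveSection}.
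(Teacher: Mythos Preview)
Your Step~1 already contains the paper's entire proof: Lemma~\ref{stabsym}(a) gives stability under squares, polarization via $(u+v)^2-u^2-v^2$ gives stability under the Jordan product $uv+vu$, and Jacobson's triangularization theorem \cite{Jacobson,Radjavi} then yields \emph{full} triangularizability, not merely a single common eigenvector. The proposition is finished at that point; that is literally the paper's three-line argument.

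Everything from ``I would then want to argue this common eigenvector \dots\ can be taken isotropic'' onward is unnecessary for this proposition and really belongs to the proof of the stronger Theorem~\ref{EqualitySymSymtheo}, which the paper establishes \emph{after} and \emph{using} Proposition~\ref{EqualitySymSymprop} as a black box. Your inductive scheme forces you to confront the isotropy question prematurely, because you need the quotient $\{x\}^\bot/\F x$ to again carry a non-degenerate symmetric form with the right Witt index; the paper avoids this entirely by invoking Jacobson once, and only afterwards handles isotropy by a clean dimension count: if a common null vector $x$ were anisotropic, then $\calV$ would embed into $\calS_{b'}$ for $b':=b_{|\{x\}^\bot}$, and Theorem~\ref{TheoMajo} would force $\dim\calV \leq \nu'(n-1-\nu') < \nu(n-\nu)$. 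Your proposed resolution of the obstacle (``restriction of $\calV$ to a suitable totally singular invariant subspace'') is too vague to evaluate and, in any case, not needed here.
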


\begin{proof}
By point (a) of Lemma \ref{stabsym}, $\calV$ is stable under squares.
Hence, it is stable under the Jordan product: for all $(u,v)\in \calV^2$, we have indeed
$$uv+vu=(u+v)^2-u^2-v^2 \in \calV.$$
Hence, by Jacobson's triangularization theorem \cite{Jacobson,Radjavi}, $\calV$ is triangularizable.
\end{proof}

We are now ready to prove Theorem \ref{EqualitySymSymtheo}. We prove the result by induction on the Witt index of $b$.
The result is obvious if it equals zero.
Now, assume that it is not zero.
By Proposition \ref{EqualitySymSymprop}, there is a non-zero vector $x$ of $V$ that is annihilated by all the vectors of
$\calV$. First of all, we prove that $x$ is isotropic.

Assume on the contrary that $x$ is not isotropic:
then, $V=\{x\}^\bot \oplus \F x$. The bilinear form $b$ induces a symmetric
bilinear form $b'$ on $\{x\}^\bot$, and the Witt index $\nu'$ of $b'$ is at most $\nu$.
Every element $u\in \calV$ stabilizes $\{x\}^\bot$
and hence induces a nilpotent endomorphism $u'$ which is $b'$-symmetric.
The space $\{u' \mid u \in \calV\}$ is a nilpotent subspace of $\End(\{x\}^\bot)$,
and it is isomorphic to $\calV$. We then deduce from Theorem \ref{TheoMajo} that
$$\dim \calV \leq \nu'(n-1-\nu') \leq \nu'(n-\nu') \leq \nu(n-\nu),$$
where the third inequality comes from $\nu' \leq \nu \leq \frac{n}{2}\cdot$
If $\nu'>0$, the second inequality is sharp. Otherwise the third one is sharp. In any case,
we obtain
$$\dim \calV <\nu(n-\nu),$$
which contradicts our assumptions.

Hence, $x$ is isotropic. From there, we use the line of reasoning from Section \ref{InductiveSection}.
Denoting by $\overline{b}$ the symmetric bilinear form induced by $b$ on $\{x\}^\bot/\F x$,
we find that the Witt index of $\overline{b}$ is $\nu-1$.
Every $u \in \calV$ induces a nilpotent $\overline{b}$-symmetric endomorphism $\overline{u}$ of $\{x\}^\bot/\F x$, and we denote by
$\calV \modu x$ the space of all $\overline{u}$ with $u \in \calV$. As we have seen in the end of Section \ref{InductiveSection},
$$\dim (\calV \modu x)=(\nu-1)\bigl((n-2)-(\nu-1)\bigr).$$
Hence, by induction, we find a maximal partially complete $\overline{b}$-singular flag $(G_0,\dots,G_{\nu-1})$ of $\{x\}^\bot/\F x$
that is stable under every element of $\calV \modu x$. For all $k \in \lcro 1,\nu\rcro$,
denote by $F_k$ the inverse image of $G_{k-1}$ under the canonical projection of $\{x\}^\bot$ onto $\{x\}^\bot/\F x$.
We gather that $\calF:=(\{0\},F_1,\dots,F_\nu)$ is a maximal partially complete $b$-singular flag of $V$
that is stable under every element of $\calV$. Hence, we have established the inclusion
$$\calV \subset \WS_{b,\calF.}$$
Since both spaces have dimension $\nu(n-\nu)$ we conclude that they are equal.

\subsection{Spaces of alternating endomorphisms for an alternating form}

Just like in the previous paragraph, we deduce the following result from point (a) of Lemma \ref{stabalt}:

\begin{prop}\label{EqualityAltAltprop}
Let $b$ be a non-degenerate alternating bilinear form on a vector space $V$ with finite dimension $n$ over $\F$. Denote by $\nu$ the Witt index of $b$, and let $\calV$ be a nilpotent linear subspace of $\calA_b$ with dimension $\nu(n-\nu-1)$. Then, $\calV$ is triangularizable.
\end{prop}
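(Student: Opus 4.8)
The plan is to follow \emph{verbatim} the argument used for Proposition \ref{EqualitySymSymprop}, simply replacing the appeal to Lemma \ref{stabsym} by its alternating counterpart. Concretely, the first step is to invoke point (a) of Lemma \ref{stabalt}: since $b$ is alternating and $\calV$ is a nilpotent subspace of $\calA_b$ with the critical dimension $\nu(n-\nu-1)$, we obtain $u^2 \in \calV$ for every $u \in \calV$, that is, $\calV$ is stable under squares. This is the only place where the specific features of the situation (an alternating form, alternating endomorphisms, the maximal dimension) are used, and it is legitimate precisely because $\calA_b$ is itself stable under squares when $b$ is alternating, as recorded at the end of Section \ref{DirectAltSection}.

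The second step is the usual polarization identity: for all $(u,v)\in\calV^2$,
$$uv+vu=(u+v)^2-u^2-v^2\in\calV,$$
so that $\calV$ is a linear subspace of nilpotent endomorphisms closed under the Jordan product. The third and final step is to apply Jacobson's triangularization theorem \cite{Jacobson,Radjavi}, which immediately yields that $\calV$ is simultaneously triangularizable.

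I do not anticipate any real obstacle here: the proof is a direct transcription of the symmetric case, the only point requiring a moment's care being to make sure that part (a) of Lemma \ref{stabalt} is invoked (it holds over every field of characteristic not $2$, with no restriction on cardinality) rather than part (b). The substantive work has already been carried out upstream in establishing Lemma \ref{stabalt}; once triangularizability is available, the remainder of the proof of Theorem \ref{EqualityAltAlttheo} will proceed exactly as in the symmetric case, by induction on the Witt index of $b$ — extracting a common kernel vector $x$ of $\calV$ (automatically isotropic now, since $x\in\{x\}^\bot$ for an alternating form), passing to the induced data on $\{x\}^\bot/\F x$ as in Section \ref{InductiveSection}, and then identifying $\calV$ with $\WA_{b,\calF}$ by a dimension count against Proposition \ref{dimspecialprop}.
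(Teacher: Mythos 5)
Your proof is correct and follows the same route as the paper: invoke part (a) of Lemma \ref{stabalt} to get stability under squares, deduce closure under the Jordan product by polarization, and conclude by Jacobson's triangularization theorem. The paper indeed treats this as a direct transcription of Proposition \ref{EqualitySymSymprop}, and your remark that the common kernel vector is automatically isotropic in the alternating case matches the paper's observation that the subsequent argument is even simpler than in the symmetric case.
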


From there, the proof of Theorem \ref{EqualityAltAlttheo} is similar to the one of Theorem \ref{EqualitySymSymtheo}.
It is even simpler because any vector of $V$ is $b$-isotropic, and hence once we have
found a non-zero vector that annihilates all the operators in $\calV$, there is no need
to prove that it is $b$-isotropic.

\section{The Hermitian version}\label{HermitianSection}

In this section, we investigate a Hermitian version of the previous theorems.

\subsection{Review of Hermitian endomorphisms}

Let $\F$ be a field equipped with a non-identity
involution $x \mapsto x^\star$. Classically,
$$\K:=\{x \in \F : x^\star=x\}$$
is a subfield of $\F$, and $\F$ is a separable quadratic extension of $\K$. Moreover,
$$\Tr_{\F/\K} : x \mapsto x +x^\star$$
is a surjective $\K$-linear form on $\F$.

Given a matrix $M \in \Mat_{n,p}(\F)$, we set
$$M^\star:=(M_{j,i}^\star)_{1 \leq i \leq p, 1 \leq j \leq n} \in \Mat_{p,n}(\F).$$
A square matrix $M$ is called \textbf{Hermitian} whenever $M=M^\star$.
We denote by $\Math_n(\F)$ the set of all $n$-by-$n$ Hermitian matrices:
it is a $\K$-linear subspace of $\Mat_n(\F)$ with dimension $n^2$ over $\K$.

Let $V$ be a finite-dimensional vector space over $\F$.
A \textbf{sesquilinear} form on $V$ is a function $b : (x,y) \in V^2 \mapsto b(x,y) \in \F$
that is right-linear, i.e. $b(x,-)$ is linear for all $x \in V$,
and left-semilinear, i.e. $b(\alpha x+x',y)=\alpha^\star\, b(x,y)+b(x',y)$
for all $(\alpha,x,x',y)\in \F \times V^3$.
A sesquilinear form on $V$ is called \textbf{Hermitian} whenever
$$\forall (x,y)\in V^2, \; b(y,x)=b(x,y)^\star.$$
By choosing a basis $(e_1,\dots,e_n)$ of $V$ and by assigning to every sesquilinear form
$b$ on $V$ the matrix $(b(e_i,e_j))_{1 \leq i,j \leq n}$, we obtain an isomorphism
from the space of all sesquilinear forms on $V$ to $\Mat_n(\F)$,
and it induces an isomorphism of the $\K$-linear subspace of all Hermitian forms
to the $\K$-linear subspace $\Math_n(\F)$.

Assume that $b$ is Hermitian. Given a subset $X$ of $V$, the right and left-orthogonal complement of $X$ with respect to $b$ are equal, and they are an $\F$-linear subspace of $V$ which we denote by $X^\bot$.
We say that $b$ is non-degenerate whenever $V^\bot=\{0\}$.
In that case and if $X$ is an $\F$-linear subspace of $V$, then $\dim_\F X+\dim_\F X^\bot=\dim_\F V$ and
$(X^\bot)^\bot=V$. We say that $X$ is totally $b$-singular whenever $X \subset X^\bot$.  We say that $b$ is non-isotropic whenever $\forall x \in V, \; b(x,x)=0 \Rightarrow x=0$.
The Witt index of $b$ is defined as the greatest possible dimension for a totally $b$-singular subspace of $V$.

Let $u \in \End_\F(V)$. We say that $u$ is $b$-Hermitian whenever
the sesquilinear form $(x,y) \mapsto b(x,u(y))$ is Hermitian, a condition which easily seen to be equivalent to the identity
$$\forall (x,y)\in V^2, \; b\bigl(u(x),y\bigr)=b\bigl(x,u(y)\bigr).$$
The set of all $b$-Hermitian endomorphisms of $V$ is denoted by $\calH_b$.
It is a $\K$-linear subspace of $\End_\F(V)$, but in general not an $\F$-linear one!
Given a Hermitian matrix $H \in \Math_n(\F)$ and a matrix $M \in \Math_n(\F)$, the
endomorphism $X \mapsto MX$ of $\F^n$ is Hermitian with respect to the Hermitian form
$(X,Y) \mapsto X^\star HY$ if and only if $HM$ is Hermitian: in that situation we say that $M$
is \textbf{$H$-Hermitian}.

Finally, a partially complete $b$-singular flag of $V$ is a flag $(F_0,\dots,F_p)$ of $\F$-linear subspaces of
$V$ such that $\dim F_i=i$ for all $i \in \lcro 0,p\rcro$, and $F_p$ is $b$-singular. Such a flag is called
maximal if $p$ equals the Witt index of $b$.

The following three lemmas are proved in the same manner as Lemmas \ref{stableortholemma},
\ref{nonisotropicLemma} and \ref{stableflaglemma}, respectively:

\begin{lemma}
Let $b$ be a non-degenerate Hermitian form on $V$, and
$u$ be a nilpotent $b$-Hermitian endomorphism of $V$.
Then:
\begin{enumerate}[(a)]
\item For every linear subspace $W$ of $V$ that is stable under $u$, the subspace
$W^\bot$ is also stable under $u$.
\item We have $\Ker u=(\im u)^\bot$.
\end{enumerate}
\end{lemma}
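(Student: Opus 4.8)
The plan is to prove the two points exactly as in the proof of Lemma~\ref{stableortholemma}, using the single defining identity of a $b$-Hermitian endomorphism. Recall that $u$ being $b$-Hermitian means precisely that $b(u(x),y)=b(x,u(y))$ for all $(x,y)\in V^2$; unlike in the bilinear case there is no sign $\epsilon'$ to worry about, so the argument is if anything cleaner. Note also that nilpotency of $u$ is not needed for either assertion in the non-degenerate case — only the Hermitian symmetry identity and non-degeneracy are used — so I would state the proof without invoking it (the hypothesis is kept only for uniformity with the bilinear lemmas and because $(\im u)^\bot$ is most naturally handled when $u$ is an endomorphism of a finite-dimensional space).

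For point (a): let $W$ be a linear subspace stable under $u$, and take $x\in W^\bot$. For every $y\in W$ we have $u(y)\in W$, so $b(x,u(y))=0$ since $x\in W^\bot$; but $b(u(x),y)=b(x,u(y))=0$ by the $b$-Hermitian identity, and as $y$ ranges over $W$ this shows $u(x)\in W^\bot$. Hence $u(W^\bot)\subset W^\bot$.

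For point (b): for any $x\in V$, membership $x\in(\im u)^\bot$ means $b(x,u(y))=0$ for all $y\in V$; by the $b$-Hermitian identity this is equivalent to $b(u(x),y)=0$ for all $y\in V$, which, since $b$ is non-degenerate, is equivalent to $u(x)=0$, i.e.\ $x\in\Ker u$. Thus $\Ker u=(\im u)^\bot$.

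There is really no obstacle here: the only thing to double-check is that the conventions for the orthogonal complement of a sesquilinear Hermitian form behave symmetrically (left- and right-orthogonals agree), which is recorded in the review subsection just above the statement, and that non-degeneracy gives the cancellation $b(z,y)=0\ \forall y \Rightarrow z=0$ used in (b). So the expected "hard part" is merely bookkeeping about one-sided versus two-sided orthogonality, and that has already been dispensed with in the preliminaries.
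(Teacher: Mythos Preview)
Your proof is correct and follows exactly the approach the paper intends: the paper does not write out a separate proof but simply declares that this lemma is proved ``in the same manner as Lemma~\ref{stableortholemma}'', which is precisely what you have done, with the harmless simplification that the sign $\epsilon'$ disappears in the Hermitian setting. Your remark that nilpotency plays no role is also accurate.
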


\begin{lemma}\label{nonisotropiclemmaH}
Let $b$ be a non-isotropic Hermitian form on $V$, and
$u$ be a nilpotent $b$-Hermitian endomorphism of $V$.
Then, $u=0$.
\end{lemma}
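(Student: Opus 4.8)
The plan is to reproduce verbatim the argument used for Lemma~\ref{nonisotropicLemma}, now relying on the Hermitian analogue of Lemma~\ref{stableortholemma} stated just above. First I would observe that a non-isotropic Hermitian form is in particular non-degenerate, so the rank--nullity identity $\dim_\F \Ker u + \dim_\F \im u = \dim_\F V$ matches the formula $\dim_\F(\im u)^\bot = \dim_\F V - \dim_\F \im u$; combined with the equality $\Ker u = (\im u)^\bot$ furnished by point (b) of the preceding lemma, this shows that $\im u$ is a complement of $\Ker u$ as soon as the two subspaces intersect trivially.

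To get that, suppose $x \in \Ker u \cap \im u$. Then $x \in (\im u)^\bot$ while $x \in \im u$, so $b(x,x)=0$, and non-isotropy forces $x=0$; hence $V = \Ker u \oplus \im u$. Now $\im u$ is stable under $u$, so $u$ induces an endomorphism $u'$ of $\im u$, and $u'$ is injective because any $x \in \im u$ with $u(x)=0$ lies in $\Ker u \cap \im u = \{0\}$. Thus $u'$ is an automorphism of the finite-dimensional space $\im u$; but $u'$, being a restriction of the nilpotent operator $u$, is nilpotent, and a nilpotent automorphism can only act on the zero space. Therefore $\im u = \{0\}$, i.e.\ $u=0$.

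I do not expect any genuine obstacle here: the content is a routine transcription of the bilinear case. The only point worth keeping in mind is that $\calH_b$ is merely a $\K$-linear subspace of $\End_\F(V)$, not an $\F$-linear one; but this plays no role, since the whole argument deals with a single fixed $\F$-linear endomorphism $u$ and uses nothing beyond the orthogonality bookkeeping for Hermitian forms recalled above (coincidence of the left and right orthogonals, the dimension identity in the non-degenerate case, and part (b) of the preceding lemma).
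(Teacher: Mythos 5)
Your proposal is correct and is essentially the proof the paper intends: the paper states that Lemma~\ref{nonisotropiclemmaH} is proved in the same manner as Lemma~\ref{nonisotropicLemma}, whose argument is exactly the one you reproduce (use $\Ker u=(\im u)^\bot$, deduce $V=\Ker u\oplus\im u$ from non-isotropy, and conclude that the induced automorphism of $\im u$ being nilpotent forces $\im u=\{0\}$). Your extra remark about $\calH_b$ being only $\K$-linear is well taken but, as you note, irrelevant here since the argument concerns a single fixed $u$.
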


\begin{lemma}\label{stableflaghermitian}
Let $b$ be a non-degenerate Hermitian form on $V$, and
let $u \in \calH_b$ be nilpotent. Then, some maximal partially complete $b$-singular flag of $V$ is stable under $u$.
\end{lemma}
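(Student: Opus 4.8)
The plan is to transcribe the proof of Lemma~\ref{stableflaglemma} into the Hermitian setting, arguing by induction on $\dim_\F V$. If $u=0$ the claim is trivial, so assume $u\neq 0$ and let $\nu$ denote the Witt index of $b$. Since $u$ is nilpotent and nonzero, $\Ker u\cap\im u\neq\{0\}$; choose a nonzero $x$ in it and write $x=u(z)$ with $u(x)=0$. The $b$-Hermitian relation then gives $b(x,x)=b(u(z),x)=b(z,u(x))=0$, so $\F x$ is totally $b$-singular and in particular $\F x\subset\{x\}^\bot$. As $x\in\Ker u$, the line $\F x$ is $u$-stable, whence $\{x\}^\bot$ is $u$-stable by part~(a) of the Hermitian analogue of Lemma~\ref{stableortholemma}.

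Next I would work in the quotient $\overline{V}:=\{x\}^\bot/\F x$, whose $\F$-dimension is $\dim_\F V-2$. The form $b$ descends to a non-degenerate Hermitian form $\overline{b}$ on $\overline{V}$ via $\overline{b}(\pi(y),\pi(z)):=b(y,z)$, where $\pi:\{x\}^\bot\to\overline{V}$ is the canonical projection; and the Witt index of $\overline{b}$ is $\nu-1$, a folklore fact obtained by completing $x$ to a hyperbolic pair $(x,x')$ (using surjectivity of $\Tr_{\F/\K}$ to arrange $b(x',x')=0$), splitting $\Vect(x,x')$ off as an orthogonal summand, and identifying $\overline{V}$ with its orthogonal complement. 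Since $u$ stabilizes $\{x\}^\bot$ and $\F x$, it induces a nilpotent endomorphism $\overline{u}$ of $\overline{V}$, and the identity $b(u(y),z)=b(y,u(z))$ descends to show that $\overline{u}\in\calH_{\overline{b}}$. By the induction hypothesis there is a maximal partially complete $\overline{b}$-singular flag $(G_0,\dots,G_{\nu-1})$ of $\overline{V}$ stable under $\overline{u}$.

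Finally I would lift this flag: put $F_0:=\{0\}$ and $F_k:=\pi^{-1}(G_{k-1})$ for $k\in\lcro 1,\nu\rcro$, so $\dim_\F F_k=k$. From $\pi\circ u=\overline{u}\circ\pi$ on $\{x\}^\bot$ together with the $\overline{u}$-stability of each $G_{k-1}$ one gets $u(F_k)\subset F_k$, while for $y,z\in F_\nu$ the relation $b(y,z)=\overline{b}(\pi(y),\pi(z))=0$ shows that $F_\nu$ is totally $b$-singular, of dimension $\nu$. Thus $(\{0\},F_1,\dots,F_\nu)$ is a maximal partially complete $b$-singular flag stable under $u$, as wanted. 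Essentially everything is a mechanical transcription of the bilinear argument; the one genuinely geometric input --- and the only place worth a second look --- is the assertion that passing to $\{x\}^\bot/\F x$ drops the Witt index by exactly one.
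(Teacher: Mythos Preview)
Your proof is correct and follows essentially the same approach as the paper, which does not give a separate argument but simply states that Lemma~\ref{stableflaghermitian} is proved in the same manner as Lemma~\ref{stableflaglemma}. Your explicit verification that $x$ is isotropic (via $b(x,x)=b(u(z),x)=b(z,u(x))=0$) is a welcome clarification that the paper leaves implicit in the bilinear case as well.
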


\subsection{Examples of large spaces of nilpotent $b$-Hermitian endomorphisms}\label{HermitianExample}

Just like in Section \ref{geomExamples}, we can construct large $\K$-linear nilpotent subspaces of $\calH_b$.

\begin{Not}
Let $b$ be a non-degenerate Hermitian form on a finite-dimensional vector space $V$ over $\F$.
Let $\calF$ be a maximal partially complete $b$-singular flag of $V$.
We denote by $\WH_{b,\calF}$ the set of all \emph{nilpotent} $u \in \calH_b$ that stabilize $\calF$.
\end{Not}

\begin{prop}
With the data from the previous definition, $\WH_{b,\calF}$
is a $\K$-linear subspace of $\End_\F(V)$ with dimension
$\nu(2n-2\nu-1)$.
\end{prop}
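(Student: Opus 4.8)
The plan is to follow the proof of Proposition~\ref{dimspecialprop} almost verbatim, replacing transpositions by the conjugate-transpose $\star$. Set $p:=n-2\nu$. First I would record the Hermitian analogue of the normal form used in Section~\ref{geomExamples}: since the involution $x\mapsto x^\star$ is non-trivial, a maximal totally $b$-singular subspace always admits a totally $b$-singular complement, and so there is a basis $\bfB=(e_1,\dots,e_n)$ of $V$, ``strongly adapted'' to $\calF$, such that $(e_1,\dots,e_i)$ spans $F_i$ for $i\in\lcro 0,\nu\rcro$, such that $\Vect(e_{n-\nu+1},\dots,e_n)$ is totally $b$-singular, such that $e_{\nu+1},\dots,e_{\nu+p}$ are $b$-orthogonal to all the other basis vectors, and such that $b(e_i,e_{n-\nu+j})=\delta_{i,j}$ for $(i,j)\in\lcro 1,\nu\rcro^2$. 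In that basis the Gram matrix of $b$ reads
$$H:=\begin{bmatrix} 0_\nu & [0]_{\nu\times p} & I_\nu \\ [0]_{p\times\nu} & P & [0]_{p\times\nu} \\ I_\nu & [0]_{\nu\times p} & 0_\nu\end{bmatrix}$$
for some non-isotropic $P\in\Math_p(\F)$; in particular $P$ is invertible.

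Next I would translate $b$-Hermiticity into matrix language: under $u\mapsto M:=\Mat_\bfB(u)$, the endomorphism $u$ lies in $\calH_b$ if and only if $HM$ is Hermitian, i.e.\ $M^\star H=HM$. Writing $M$ in $(\nu,p,\nu)$-block form and expanding this identity is a routine computation which yields that $u\in\calH_b$ exactly when
$$M=\begin{bmatrix} A & (PC_2)^\star & B \\ C_1 & P^{-1}S & C_2 \\ D_1 & (PC_1)^\star & A^\star\end{bmatrix}$$
for some $A\in\Mat_\nu(\F)$, some $C_1,C_2\in\Mat_{p,\nu}(\F)$, some $S\in\Math_p(\F)$ and some $B,D_1\in\Math_\nu(\F)$. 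From this description it is clear that $\calH_b$ — and hence $\WH_{b,\calF}$ — is a $\K$-linear subspace of $\End_\F(V)$, but not an $\F$-linear one (the $\star$ attached to $A$, $C_1$, $C_2$ obstructs scaling by elements of $\F\setminus\K$).

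I would then cut this family down to $\WH_{b,\calF}$. If $u\in\calH_b$ is nilpotent and stabilizes $F_\nu$, it also stabilizes $F_\nu^\bot$, hence induces a nilpotent $\overline{b}$-Hermitian endomorphism of $F_\nu^\bot/F_\nu$, on which $\overline{b}$ is non-isotropic; Lemma~\ref{nonisotropiclemmaH} forces this induced endomorphism to vanish, so $u$ maps $F_\nu^\bot$ into $F_\nu$. In block terms this means exactly $C_1=0$, $D_1=0$ and $S=0$, so that
$$M=\begin{bmatrix} A & (PC_2)^\star & B \\ [0]_{p\times\nu} & 0_p & C_2 \\ 0_\nu & [0]_{\nu\times p} & A^\star\end{bmatrix}.$$
Among the endomorphisms of this shape, nilpotency is equivalent to $A$ being nilpotent, while stabilizing the whole flag $\calF$ is equivalent to $A$ being upper-triangular; together these amount to $A\in\NT_\nu(\F)$. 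Hence $\WH_{b,\calF}$ is precisely the set of endomorphisms represented in $\bfB$ by a matrix of the above shape with $A\in\NT_\nu(\F)$, $C_2\in\Mat_{p,\nu}(\F)$ and $B\in\Math_\nu(\F)$ arbitrary, these three blocks varying freely and independently.

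It only remains to count the $\K$-dimension. Since $\dim_\K\F=2$ we have $\dim_\K\NT_\nu(\F)=2\dbinom{\nu}{2}=\nu(\nu-1)$ and $\dim_\K\Mat_{p,\nu}(\F)=2p\nu$, while $\dim_\K\Math_\nu(\F)=\nu^2$ as recalled above; therefore
$$\dim_\K\WH_{b,\calF}=\nu(\nu-1)+2p\nu+\nu^2=2\nu^2-\nu+2\nu(n-2\nu)=\nu(2n-2\nu-1).$$
Every step here is pure bookkeeping once the very first one is in place, so the only point requiring genuine care is the existence of a strongly adapted basis, i.e.\ the Witt-type normal form for a non-degenerate Hermitian form relative to the totally singular flag $\calF$; this is classical for Hermitian forms over a field with a non-trivial involution and is obtained exactly as in the symmetric case, the extra (and here unconditional) ingredient being that the chosen maximal totally singular subspace admits a totally singular complement.
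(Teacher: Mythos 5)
Your proof is correct and follows essentially the same strategy as the paper: choose a strongly adapted basis so that the Gram matrix has the given block form, reduce $\WH_{b,\calF}$ to explicit block matrices using Lemma~\ref{nonisotropiclemmaH}, and count the resulting free $\K$-parameters. The only cosmetic difference is the order of operations — you first impose $H$-Hermiticity on the full block matrix and then cut down by the flag-stabilization and nilpotency constraints, whereas the paper first imposes those constraints and then works out which such matrices are $H$-Hermitian — but the computations and the final count $\nu(\nu-1)+2p\nu+\nu^2=\nu(2n-2\nu-1)$ are identical.
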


\begin{proof}
Write $\calF=(F_0,\dots,F_\nu)$, so that $F_\nu$ is totally singular for $b$. As $b$ is Hermitian and non-degenerate,
it is folklore that there exists a complementary subspace $H$ of $F_\nu^\bot$
that is totally $b$-singular. Set $p:=n-2\nu$. As $b$ is non-degenerate, we can choose a basis $(e_1,\dots,e_\nu)$ of $F_\nu$ that is adapted to $\calF$, and then a basis $(e'_1,\dots,e'_\nu)$ of $H$ such that $b(e_i,e'_j)=\delta_{i,j}$ for all $(i,j)\in \lcro 1,\nu\rcro^2$.
The restriction of $b$ to $(F_\nu \oplus H)^2$ is non-degenerate, whence
$G:=(F_\nu \oplus H)^\bot$ is a complementary subspace of $F_\nu \oplus H$
in $V$. We choose a basis $(f_1,\dots,f_p)$ of that space and set
$P:=(b(f_i,f_j))_{1 \leq i,j \leq p}$. Hence, the matrix of $b$ in
the basis $(e_1,\dots,e_\nu,f_1,\dots,f_p,e'_1,\dots,e'_\nu)$ equals
$$H:=\begin{bmatrix}
0_\nu & [0]_{\nu \times p} & I_\nu \\
[0]_{p \times \nu} & P & [0]_{p \times \nu} \\
I_\nu & [0]_{\nu \times p} & 0_\nu
\end{bmatrix}$$
and we have $P \in \Math_p(\F)\cap \GL_p(\F)$.

Let $u \in \WH_{b,\calF}$. Then, $u$ stabilizes $F_\nu$, and hence it also stabilizes $F_\nu^\bot$
and it induces an endomorphism $\overline{u}$ of $F_\nu^\bot/F_\nu$.
The Hermitian form $b$ induces a Hermitian form $\overline{b}$ on $F_\nu^\bot/F_\nu$, and
the definition of $\nu$ yields that $\overline{b}$ is non-isotropic. Obviously $\overline{u}$
is $\overline{b}$-Hermitian, whence Lemma \ref{nonisotropiclemmaH} yields $\overline{u}=0$, so that
$u$ maps $F_\nu^\bot$ into $F_\nu$.

It follows that the set consisting of the matrices in $\bfB$ of
the elements of $\WH_{b,\calF}$ is the set of all $H$-Hermitian nilpotent matrices of the form
$$M=\begin{bmatrix}
A & B &  E \\
[0]_{p \times \nu} & 0_p & C \\
0_\nu & [0]_{\nu \times p} & D
\end{bmatrix}$$
with $A \in \Mat_\nu(\F)$ upper-triangular, $D \in \Mat_\nu(\F)$, $B \in \Mat_{\nu,p}(\F)$, $C \in \Mat_{p,\nu}(\F)$, and $E \in \Mat_\nu(\F)$. Next, a straightforward computation shows that
such a matrix is $H$-Hermitian if and only if $D=A^\star$, $E$ is Hermitian
and $B=(PC)^\star$; in that case, it is nilpotent if and only if $A$ is nilpotent, i.e. $A \in \NT_\nu(\F)$.
It follows that $\WH_{b,\calF}$ is a $\K$-linear subspace of $\calH_b$
that is isomorphic to the $\K$-linear subspace of all matrices of the form
$$\begin{bmatrix}
A & (PC)^\star & E \\
[0]_{p \times \nu} & 0_p & C \\
0_\nu & [0]_{\nu \times p} & A^\star
\end{bmatrix} \quad \text{with $A \in \NT_\nu(\F)$, $C \in \Mat_{p,\nu}(\F)$ and
$E \in \Math_\nu(\F)$.}$$
The claimed result follows since the dimension of the latter space is obviously
$$\nu(\nu-1)+2 p\nu+\nu^2=\nu(2\nu-1+2p)=\nu(2n-2\nu-1).$$
\end{proof}

\subsection{Main result}

Now, we can state our main result on subspaces of nilpotent Hermitian endomorphisms:

\begin{theo}\label{HermitianTheo}
Let $V$ be a vector space with finite dimension $n$ over $\F$, and $b$ be a non-degenerate
Hermitian form on $V$, whose Witt index we denote by $\nu$.
Let $\calV$ be a nilpotent $\K$-linear subspace of $\calH_b$. Then:
$$\dim \calV \leq \nu(2n-2\nu-1).$$
If in addition $|\F|>4$ and $\dim \calV=\nu(2n-2\nu-1)$, then
there exists a maximal partially complete $b$-singular flag $\calF$ of the
$\F$-vector space $V$ such that $\calV=\WH_{b,\calF}$.
\end{theo}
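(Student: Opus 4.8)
The plan is to follow, \emph{mutatis mutandis}, the strategy of Sections~\ref{DirectMORSection} and~\ref{MaxDimSection}, the one genuinely new feature being that everything must be done relative to the ground field $\K$ rather than $\F$, since $\calH_b$ is only a $\K$-linear subspace of $\End_\F(V)$.

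\emph{Step 1: the dimension bound.} Fix a maximal partially complete $b$-singular flag $\calF$ and a basis strongly adapted to it, exactly as in the proposition of Section~\ref{HermitianExample}, so that $b$ has the displayed block Gram matrix with $P \in \Math_p(\F)\cap \GL_p(\F)$, $p:=n-2\nu$. Every $u \in \calH_b$ then has a block matrix with free parameters $A(u)\in \Mat_\nu(\F)$, $C_1(u),C_2(u)\in \Mat_{p,\nu}(\F)$, $D_1(u),D_2(u)\in \Math_\nu(\F)$, $S(u)\in \Math_p(\F)$, and the computation of Section~\ref{HermitianExample} (which uses Lemma~\ref{nonisotropiclemmaH} to kill the middle block) shows that a nilpotent $u\in\calH_b$ lies in $\WH_{b,\calF}$ exactly when $C_1(u)=0$, $D_1(u)=0$, $S(u)=0$ and $A(u)\in\NT_\nu(\F)$. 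Mimicking Section~\ref{DirectMORSection}, introduce the $\K$-linear maps $\chi : v\in\calV \mapsto \bigl(I(v),C_1(v),D_1(v)\bigr)$ (with $I(v)\in\NT_\nu(\F)$ recording the strictly-upper part of $A(v)^{\star}$ in the relevant reflected form), whose kernel is $\calV\cap\WH_{b,\calF}$ (again invoking Lemma~\ref{nonisotropiclemmaH} to force $S=0$ for nilpotent elements stabilizing $\calF$), and the injective $\chi' : u\in\WH_{b,\calF}\mapsto \bigl(A(u),C_2(u),D_2(u)\bigr)$, both valued in $W:=\NT_\nu(\F)\times\Mat_{p,\nu}(\F)\times\Math_\nu(\F)$, a $\K$-space of dimension $\nu(\nu-1)+2p\nu+\nu^2=\nu(2n-2\nu-1)$. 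For $u\in\WH_{b,\calF}$ and $v\in\calH_b$ one computes, using $P^{\star}=P$, that $\tr(vu)=\Tr_{\F/\K}\!\bigl(\tr(I(v)^{\star}A(u))\bigr)+\Tr_{\F/\K}\!\bigl(\tr(C_1(v)(PC_2(u))^{\star})\bigr)+\tr(D_1(v)D_2(u))\in\K$, and this is the value at $(\chi(v),\chi'(u))$ of a symmetric $\K$-bilinear form $c$ on $W$, which is non-degenerate because $P$ is invertible and because $\Tr_{\F/\K}$ induces a non-degenerate $\K$-bilinear trace form on $\F$ (separability of $\F/\K$). Since $v,u,v+u \in \calV$ are nilpotent, the Trace Orthogonality Lemma (Lemma~\ref{tracecor}) gives $\tr(vu)=0$ for all $v\in\calV$ and $u\in\calV\cap\WH_{b,\calF}$, so $\im\chi$ and $\chi'(\calV\cap\WH_{b,\calF})$ are $c$-orthogonal; the rank theorem then yields $\dim_\K\calV=\dim_\K\im\chi+\dim_\K\chi'(\calV\cap\WH_{b,\calF})\le\dim_\K W=\nu(2n-2\nu-1)$, with no cardinality assumption. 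Run when equality holds, the same argument shows, exactly as Lemma~\ref{doubleortholemma}, that every nilpotent $v\in\calH_b$ with $\tr(vu)=0$ for all $u\in\calV$ belongs to $\calV$ (using Lemma~\ref{stableflaghermitian} to place $v$ inside some $\WH_{b,\calF}$).

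\emph{Step 2: from maximality to a common kernel vector.} Now assume $|\F|>4$ (equivalently $|\K|\ge 3$) and $\dim_\K\calV=\nu(2n-2\nu-1)$; we induct on $\nu$, the case $\nu=0$ being Lemma~\ref{nonisotropiclemmaH}. The space $\calH_b$ is stable under squares, since $b(u^2x,y)=b(ux,uy)=b(x,u^2y)$; hence for $u\in\calV$ the endomorphism $u^2$ is nilpotent and $b$-Hermitian, and $\tr(wu^2)=0$ for all $w\in\calV$ by Lemma~\ref{tracecor} with $k=2$ (this requires at least three scalars $\lambda\in\K$ with $w+\lambda u$ nilpotent, which is where $|\K|\ge 3$ is used). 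By the double-orthogonality statement from Step~1, $u^2\in\calV$, so $\calV$ is closed under the Jordan product $uv+vu=(u+v)^2-u^2-v^2$, and Jacobson's triangularization theorem \cite{Jacobson,Radjavi}, applied to $\calV$ as a Jordan-closed space of nilpotent $\K$-linear operators on $V$ regarded over $\K$, produces a non-zero $x\in V$ annihilated by every element of $\calV$. One checks $x$ is $b$-isotropic: otherwise $V=\F x\oplus\{x\}^{\bot}$ orthogonally, $\calV$ embeds $\K$-linearly into a nilpotent subspace of $\calH_{b'}$ for the induced non-degenerate Hermitian form $b'$ on $\{x\}^{\bot}$, of Witt index $\nu'\le\nu$, and the bound already proved gives $\dim_\K\calV\le\nu'\bigl(2(n-1)-2\nu'-1\bigr)<\nu(2n-2\nu-1)$ (a short computation using $\nu'\le\nu\le n/2$), contradicting maximality.

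\emph{Step 3: Hermitian tensors and the inductive step.} For $y\in\{x\}^{\bot}$ set $T_y:z\mapsto b(y,z)\,x+b(x,z)\,y$; this is a nilpotent $b$-Hermitian endomorphism with $T_y(x)=0$ and $T_y(\{x\}^{\bot})\subseteq\F x$, the map $y\mapsto T_y$ is $\K$-linear with kernel $\{\lambda x:\Tr_{\F/\K}(\lambda)=0\}$ (a $\K$-line), and the Hermitian analogue of Proposition~\ref{caractensors}, proved in the same way, states that every $u\in\calH_b$ with $u(x)=0$ and $u(\{x\}^{\bot})\subseteq\F x$ equals some $T_y$. Writing $\overline b$ for the Hermitian form of Witt index $\nu-1$ induced by $b$ on $\{x\}^{\bot}/\F x$, and $\calV\modu x$ for the nilpotent subspace of $\calH_{\overline b}$ it induces, the rank theorem applied to $u\mapsto\overline u$ on $\calV$ (whose kernel is $\{T_y\}_{y\in L}$ for a $\K$-subspace $L\subseteq\{x\}^{\bot}$) gives $\dim_\K\calV=(\dim_\K L-1)+\dim_\K(\calV\modu x)$; since $\dim_\K L\le 2n-2$ and $\dim_\K(\calV\modu x)\le(\nu-1)\bigl(2(n-2)-2(\nu-1)-1\bigr)$ by the bound already proved, and since the resulting upper bound for $\dim_\K\calV$ equals $\nu(2n-2\nu-1)$ (a short computation), maximality forces $\dim_\K(\calV\modu x)$ to be maximal. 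By the induction hypothesis there is a maximal partially complete $\overline b$-singular flag $\overline\calF$ of $\{x\}^{\bot}/\F x$ with $\calV\modu x=\WH_{\overline b,\overline\calF}$; pulling it back along $\{x\}^{\bot}\to\{x\}^{\bot}/\F x$ and prepending $\{0\}$ yields a maximal partially complete $b$-singular flag $\calF$ of $V$ stabilized by every element of $\calV$, so $\calV\subseteq\WH_{b,\calF}$, and equality of dimensions gives $\calV=\WH_{b,\calF}$.

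\emph{Main obstacle.} The essential difficulties are the bookkeeping ones created by the mismatch between the $\K$-linear structure of $\calV$ and the $\F$-linear structure of $V$: one must verify $\tr(vu)\in\K$ and that the pairing $c$ is non-degenerate over $\K$ (both resting on separability of $\F/\K$ and surjectivity of $\Tr_{\F/\K}$), apply Jacobson's theorem over $\K$, and set up the Hermitian tensors $T_y$, which — unlike in the bilinear case — depend neither $\F$-linearly nor $\F$-semilinearly on $y$ yet still span a $\K$-subspace of $\calH_b$. The cardinality hypothesis $|\F|>4$ enters, and only enters, at the single point where $u^2$ is shown to lie back in $\calV$.
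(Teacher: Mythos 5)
Your proposal is correct and follows essentially the same route as the paper: the Mathes--Omladi\v{c}--Radjavi direct proof of the dimension bound and the double-orthogonality lemma (Section~\ref{DirectMORSection} transposed to $\calH_b$, matching the paper's Lemma~\ref{doubleortholemmaHermitian}), followed by stability under squares and the Jordan product, Jacobson's triangularization over $\K$, the isotropy check on the common kernel vector, and the tensor-based induction from Section~\ref{InductiveSection}. The only differences are cosmetic bookkeeping choices in the definition of $I(v)$ and the decision to present a single proof of the bound rather than the paper's two.
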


The remainder of the article is devoted to the proof of this theorem.
We will adapt the proof strategy that was applied in the symmetric/alternating case.
We give two proofs of the inequality statement of Theorem \ref{HermitianTheo}, one that
follows the ideas of Section \ref{InductiveSection}, and one that follows the Mathes-Omladi\v{c}-Radjavi strategy.
It is also possible to adapt the direct proof from Section \ref{DirectGerstenSection}, but it would require the skew-field
version of Gerstenhaber's theorem \cite{dSPGerstenhaberskew}.

\subsection{Inductive proof of the inequality in Theorem \ref{HermitianTheo}}

Here, we adapt the inductive proof of Section \ref{InductiveSection}.

Let $b$ be a non-degenerate Hermitian form on an $\F$-vector space $V$ with finite dimension $n$.
Given two vectors $x$ and $y$ of $V$, the endomorphism
$$z \mapsto b(y,z)\,x+b(x,z)\,y$$
is easily seen to be $b$-Hermitian.
We call it the $b$-Hermitian tensor of $x$ and $y$, and we denote it by $x \otimes_b y$.
Note that $(x,y) \mapsto x \otimes_b y$ defines a symmetric $\K$-bilinear mapping from $V^2$ to $\calH_b$.
Given a non-zero vector $x$ of $V$, the $\K$-linear mapping $y \mapsto x \otimes_b y$
has its kernel equal to the $1$-dimensional subspace $\{\lambda x \mid \lambda \in \F \; \text{such that}\; \Tr_{\F/\K}(\lambda)=0\}$.
Given a $\K$-linear subspace $L$ of $V$, we denote by $x \otimes_b L$
the set of all tensors $x \otimes_b y$ with $y \in L$: it is therefore a $\K$-linear subspace of
$\calH_b$.

Now, given $x \in V$ such that $b(x,x)=0$, and given $y \in V$ such that $b(x,y)=0$, one
checks that $x \otimes_b y$ maps $V$ into $\Vect(x,y)$, $\{x\}^\bot$ (which includes $\Vect(x,y)$) into $\F x$, and $\F x$ into $\{0\}$ (and in particular it is nilpotent with nilindex at most $3$). As in the case of symmetric and alternating tensors, there is a converse statement:

\begin{prop}\label{caractensorsHermitian}
Let $x \in V$ be a non-zero vector such that $b(x,x)=0$.
Let $u \in \calH_b$ be such that $u(x)=0$ and $u(\{x\}^\bot) \subset \F x$.
Then, there exists $y \in \{x\}^\bot$ such that $u=x \otimes_b y$.
\end{prop}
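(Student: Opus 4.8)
The plan is to mirror, step for step, the proof of Proposition \ref{caractensors}, carrying along the bookkeeping forced by the semilinearity of $b$ in its first slot, and inserting one genuinely new argument (that a certain scalar is fixed by the involution). As in the bilinear case, I would first extract two structural facts. Since $x \in \Ker u$ and $\Ker u = (\im u)^\bot$ (the Hermitian analogue of Lemma \ref{stableortholemma}), we get $\im u \subset \{x\}^\bot$. Next, because $\{x\}^\bot$ is an $\F$-hyperplane of $V$ and $u(\{x\}^\bot) \subset \F x$, the image $u(\{x\}^\bot)$ is either $\{0\}$ or $\F x$: in the former case $\{x\}^\bot \subset \Ker u$, hence $\im u \subset (\{x\}^\bot)^\bot = \F x$; in the latter case $\im u = u(\{x\}^\bot) + u(\F w)$ for any $w \notin \{x\}^\bot$, which has $\F$-dimension at most $2$. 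So $\rk u \leq 2$ always, and we split into cases accordingly.

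In the case $\rk u \leq 1$: if $u = 0$ take $y = 0$; otherwise $\im u = \F x$ and $u : z \mapsto \varphi(z)\,x$ for an $\F$-linear form $\varphi$ on $V$ vanishing on $\{x\}^\bot$, so, comparing with the $\F$-linear form $b(x,-)$ (which has the same kernel $\{x\}^\bot$), we get $u : z \mapsto \lambda\, b(x,z)\,x$ for some $\lambda \in \F$. Now I would write out the $b$-Hermitian identity $b(u(x'),y') = b(x',u(y'))$ for this $u$; using $b(x,x')^\star = b(x',x)$ and the fact that $b$ is non-degenerate (so one can pick $x',y'$ with $b(x',x)\neq 0 \neq b(x,y')$), this forces $\lambda^\star = \lambda$, i.e. $\lambda \in \K$. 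Since $\Tr_{\F/\K}$ is onto $\K$, choose $\mu \in \F$ with $\mu + \mu^\star = \lambda$; then $x \otimes_b (\mu x) : z \mapsto (\mu^\star + \mu)\,b(x,z)\,x = u(z)$, and $\mu x \in \{x\}^\bot$ because $b(x,x) = 0$. This settles the rank $\leq 1$ case.

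In the case $\rk u = 2$: then necessarily $u(\{x\}^\bot) = \F x$, so $\F x \subset \im u \subset \{x\}^\bot$ and we may pick $y \in \im u \setminus \F x$ with $y \in \{x\}^\bot$ and $\im u = \Vect(x,y)$, whence $u : z \mapsto \varphi(z)\,x + \psi(z)\,y$ with $\varphi,\psi$ $\F$-linear; restricting to $\{x\}^\bot$ kills $\psi$ there, so $\psi = \lambda\, b(x,-)$ for some $\lambda \in \F$. Then $u - x \otimes_b (\lambda y)$ has vanishing $y$-component, hence range inside $\F x$, is still $b$-Hermitian, annihilates $x$ (note $b(y,x)=b(x,y)^\star=0$), and maps $\{x\}^\bot$ into $\F x$; by the previous case it equals $x \otimes_b y'$ for some $y' \in \{x\}^\bot$, so $u = x \otimes_b(\lambda y + y')$ with $\lambda y + y' \in \{x\}^\bot$ by $\K$-additivity of $x \otimes_b(-)$. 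I expect the only step that is not pure routine to be the verification that the scalar $\lambda$ in the rank-$1$ reduction lies in $\K$: this is precisely where the Hermitian hypothesis on $u$ (as opposed to mere $\F$-linearity) is used, and it is what makes the passage through $\Tr_{\F/\K}$ necessary and legitimate; everything else is the symmetric-tensor argument transcribed with semilinear signs.
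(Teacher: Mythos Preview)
Your proposal is correct and follows essentially the same route as the paper's own proof: the paper handles the rank-$1$ case exactly as you do (obtain $u:z\mapsto\lambda\,b(x,z)\,x$, use the Hermitian identity to force $\lambda^\star=\lambda$, then write $\lambda=\alpha+\alpha^\star$ via surjectivity of $\Tr_{\F/\K}$ and recognize $u=x\otimes_b(\alpha x)$), and for $\rk u\geq 2$ it simply refers back to the bilinear argument of Proposition~\ref{caractensors}, which is precisely the reduction you spell out. Your write-up is just more explicit than the paper's, which leaves the rank-$2$ details to the reader.
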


\begin{proof}
The claimed statement is obvious if $u=0$.
Assume now that $u$ has rank $1$. Then, as in the proof of Lemma \ref{caractensors}, we find $\im u=\{x\}^\bot$, whence
$u : z \mapsto \varphi(z)\,x$ for some linear form $\varphi$ on $V$.
Since $u(z)=0$ for all $z \in \{x\}^\bot$, we find $\varphi=\lambda\, b(x,-)$ for some $\lambda \in \F$.
Then, since $u$ is $b$-Hermitian, one obtains $\lambda^\star=\lambda$.
It follows that $\lambda=\alpha+\alpha^\star$ for some $\alpha \in \F$, and then
one sees that $\varphi=x \otimes_b (\alpha x)$ and one notes that $\alpha x\in \{x\}^\bot$.

Now, if $\rk u \geq 2$, one reduces the situation to the one where $\rk u \leq 1$, with exactly
the same line of reasoning as in the proof of Lemma \ref{caractensors}: we leave the details to the reader.
\end{proof}

Next, the Trace Orthogonality Lemma yields the following result:

\begin{lemma}\label{orthotensorherm}
Let $\calV$ be a nilpotent $\K$-linear subspace of $\calH_b$.
Let $x$ be a non-zero vector of $V$. Let $u \in \calV$ and $y \in V$  be such that $x \otimes_b y \in \calV$.
Then, $\Tr_{\F/\K}\bigl(b(y,u(x))\bigr)=0$.
\end{lemma}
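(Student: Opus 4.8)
The plan is to transcribe the proofs of Lemmas~\ref{orthotensorsym} and~\ref{orthotensoralt} into the Hermitian setting, being careful that $V$ is regarded as an $\F$-vector space when traces are computed, and that $\calV$ is only a $\K$-linear subspace.

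First I would set $v := x \otimes_b y$, which belongs to $\calV$ by assumption. As $\calV$ is $\K$-linear and consists of nilpotent endomorphisms, $\lambda u + \mu v$ is nilpotent for every $(\lambda,\mu) \in \K^2$; since two pairs in $\K^2$ are $\F$-linearly dependent if and only if they are $\K$-linearly dependent, the $|\K|+1$ points of $\Pgros^1(\K)$ furnish at least $3$ pairwise $\F$-linearly independent pairs $(\lambda,\mu) \in \F^2$ for which $\lambda u + \mu v$ is nilpotent (here one uses $|\K| \geq 2$, which always holds). Passing to the matrices $A$ and $B$ of $u$ and $v$ in a basis of the $\F$-space $V$ and applying Lemma~\ref{tracelemma} with $k=1$, exactly as in the proof of Lemma~\ref{tracecor}(a), I obtain $\tr(AB)=0$; by cyclicity of the trace this means $\tr(v\circ u)=0$, where the trace is that of $v\circ u$ viewed as an endomorphism of the $\F$-vector space $V$.

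Next I would evaluate this trace. Since $(v\circ u)(z)=b\bigl(y,u(z)\bigr)\,x+b\bigl(x,u(z)\bigr)\,y$, write $v\circ u=P+Q$ with $P:z\mapsto b(y,u(z))\,x$ and $Q:z\mapsto b(x,u(z))\,y$. Because $x\neq 0$, the range of $P$ lies in $\F x$, so completing $x$ to an $\F$-basis of $V$ shows that $\tr(P)$ equals the coordinate of $P(x)=b(y,u(x))\,x$ along $x$, namely $b(y,u(x))$; likewise $\tr(Q)=b(x,u(y))$ (both sides being zero when $y=0$). Hence $b(y,u(x))+b(x,u(y))=0$.

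Finally I would invoke the Hermitian symmetries. Since $b$ is Hermitian and $u$ is $b$-Hermitian, $b(y,u(x))=b(u(x),y)^\star=b(x,u(y))^\star$, so the identity above becomes $b(x,u(y))^\star+b(x,u(y))=\Tr_{\F/\K}\bigl(b(x,u(y))\bigr)=0$; and since $\Tr_{\F/\K}$ is invariant under $\star$ while $b(y,u(x))=b(x,u(y))^\star$, this is the same as $\Tr_{\F/\K}\bigl(b(y,u(x))\bigr)=0$, as claimed. The one point to flag is the first step: Lemma~\ref{tracecor} cannot be cited verbatim, because $\calV$ need not be $\F$-linear, but the projective line over $\K$ already carries enough points to feed the generalized trace lemma directly; everything else is a routine adaptation of the symmetric and alternating cases.
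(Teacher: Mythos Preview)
Your proof is correct and follows essentially the same approach as the paper's: set $v:=x\otimes_b y$, obtain $\tr(vu)=0$, compute this trace as $b(y,u(x))+b(x,u(y))$, and use the Hermitian identities to rewrite it as $\Tr_{\F/\K}\bigl(b(y,u(x))\bigr)$. Your treatment is in fact more careful than the paper's on the one point you flag: the paper simply cites Lemma~\ref{tracecor}, whose hypothesis literally asks for all $\F$-linear combinations to be nilpotent, whereas you correctly observe that $\calV$ is only $\K$-linear and appeal directly to Lemma~\ref{tracelemma} using the $|\K|+1\geq 3$ points of $\Pgros^1(\K)$.
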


\begin{proof}
Set $v:=x \otimes_b y$. By Lemma \ref{tracecor}, we have $\tr(v u)=0$.
Then, with the same line of reasoning as in the proof of Lemma \ref{orthotensorsym}, one checks
that
$$\tr(v u)=b(y,u(x))+b(x,u(y))=b(y,u(x))+b(y,u(x))^\star=\Tr_{\F/\K}\bigl(b(y,u(x))\bigr),$$
which yields the conclusion.
\end{proof}

From there, adapting the proof given in Section \ref{InductiveSection} is easy:
let $\calV$ be a nilpotent $\K$-linear subspace of $\calH_b$.
If $b$ is non-isotropic, then $\calV=\{0\}$ by Lemma \ref{nonisotropiclemmaH}, and we are done.
Assume otherwise, and denote by $\nu>0$ the Witt index of $b$. Choose a vector $x \in V \setminus \{0\}$
such that $b(x,x)=0$. Set $D:=\{\lambda \in \K : \; \Tr_{\F/\K}(\lambda)=0\}$.

Consider the subspace
$$\calU:=\{u \in \calV : \; u(x)=0\}.$$
Denote by $\overline{b}$ the Hermitian form induced by $b$ on $\overline{V}:=\{x\}^\bot/\F x$:
its Witt index equals $\nu-1$.
Let $u \in \calU$. Since $u$ stabilizes $\F x$, it also stabilizes its orthogonal complement $\{x\}^\bot$,
whence it induces a nilpotent endomorphism
$\overline{u}$ of $\overline{V}$. The set
$$\calV \modu x:=\{\overline{u} \mid u \in \calU\}$$
is a nilpotent linear subspace of $\calH_{\overline{b}}$.
By Proposition \ref{caractensors}, the kernel of the surjective $\K$-linear mapping
$$\Phi : u \in \calU \mapsto \overline{u} \in \calV \modu x$$
reads $x \otimes_b L$ for a unique $\K$-linear subspace $L$ of $\{x\}^\bot$
that includes $D$, and we have $\dim_\K \Ker \Phi=\dim_\K L-1$.
On the other hand, we set
$$\calV x:=\{u(x) \mid u \in \calV\},$$
so that $\calU$ is the kernel of the surjective $\K$-linear mapping $u \in \calV \mapsto u(x) \in \calV x$.
Note that $\calV x \cap \F x=\{0\}$ as no element $u \in \calV$ satisfies $u(x)=\mu x$
for some non-zero scalar $\mu\in \F$.

Applying the rank theorem twice, we find
$$\dim_\K \calV=(\dim_\K L-1)+\dim_\K (\calV x)+\dim_\K (\calV \modu x).$$
Yet, by Lemma \ref{orthotensorherm}, the subspaces $L$ and $\calV x$ are $b$-orthogonal.
Better still, since $x$ is $b$-orthogonal to $L$, we find that $\F x \oplus \calV x$ is $b$-orthogonal to $L$, leading to $(\dim_\K (\calV x)+2)+\dim_\K L \leq 2n$, and hence
$$\dim_\K (\calV x)+\dim_\K L \leq 2n-2.$$
Finally, by induction, we have
$$\dim_\K (\calV \modu x) \leq (\nu-1)(2(n-2)-2(\nu-1)-1)=(\nu-1)(2n-2\nu-3),$$
whence
$$\dim_\K \calV \leq (\nu-1)(2n-2\nu-3)+2n-3=\nu(2n-2\nu-1).$$
Moreover, if equality holds, then
$$\dim_\K (\calV \modu x)=(\nu-1)(2(n-2)-2(\nu-1)-1).$$

\subsection{Direct proof of the inequality in Theorem \ref{HermitianTheo}}

Here, we give a direct proof of the inequality statement in  Theorem \ref{HermitianTheo},
by adapting the proof given in Section \ref{DirectMORSection}.

Let $\calV$ be a nilpotent $\K$-linear subspace of $\calH_b$.
We choose a maximal partially complete $b$-singular flag $\calF=(F_0,\dots,F_\nu)$ of $V$, and we set $p:=n-2\nu$.
Just like in Section \ref{geomExamples}, we find a basis $\bfB=(e_1,\dots,e_n)$ of $V$ that is strongly adapted to $\calF$ in the following sense:

\begin{itemize}
\item $(e_1,\dots,e_i)$ is a basis of $F_i$ for all $i \in \lcro 0,\nu\rcro$;
\item $e_{\nu+1},\dots,e_{\nu+p}$ are orthogonal to $e_1,\dots,e_\nu,e_{n-\nu+1},\dots,e_n$;
\item $b(e_i,e_{n-\nu+j})=\delta_{i,j}$ for all $(i,j) \in \lcro 1,\nu\rcro^2$;
\item the subspace $\Vect(e_k)_{n-\nu < k \leq n}$ is totally singular for $b$.
\end{itemize}

In the basis $\bfB$, the matrix of $b$ reads
$$H=\begin{bmatrix}
0_\nu & [0]_{\nu \times p} & I_\nu \\
[0]_{p\times \nu} & P & [0]_{p \times \nu} \\
I_\nu & [0]_{\nu \times p}  & 0_\nu
\end{bmatrix}$$
for some non-isotropic Hermitian matrix $P \in \Math_p(\F)$.
For every $u \in \calH_b$, the matrix of $u$ in $\bfB$ reads
$$M(u)=\begin{bmatrix}
A(u) & (PC_2(u))^\star & D_2(u) \\
C_1(u) & P^{-1} H'(u) & C_2(u) \\
D_1(u) & (PC_1(u))^\star & A(u)^\star
\end{bmatrix}$$
where $A(u) \in \Mat_\nu(\F)$, $D_1(u)$, $D_2(u)$ belong to $\Math_\nu(\F)$, $C_1(u)$ and $C_2(u)$ belong to $\Mat_{p,\nu}(\F)$, and $H'(u) \in \Math_p(\F)$.

As we have seen in Section \ref{HermitianExample}, the elements of $\WH_{b,\calF}$ are exactly the $u \in \calH_b$
for which $C_1(u)=0$, $D_1(u)=0$, $H'(u)=0$, and $A(u)$ is strictly upper-triangular.

To any $u \in \calH_b$, we assign the matrix $I(u) \in \NT_\nu(\F)$ defined as follows:
$$I(u)_{i,j}=\begin{cases}
A(u)_{j,i} & \text{if $i<j$} \\
0 & \text{otherwise.}
\end{cases}$$
Since $\calV$ is nilpotent, it follows that $\calV \cap \WH_{b,\calF}$ is precisely the kernel of the  $\K$-linear mapping
$$\chi : u \in \calV \mapsto (I(u),C_1(u),D_1(u)) \in \NT_\nu(\F) \times \Mat_{p,\nu}(\F) \times \Math_\nu(\F).$$
The rank theorem then yields
$$\dim \calV = \rk \chi+\dim (\calV \cap \WH_{b,\calF}).$$
Finally, let $u \in \WH_{b,\calF}$ and $v \in \calH_b$.
Noting that
$$M(u)=\begin{bmatrix}
A(u) & (PC_2(u))^\star & D_2(u) \\
[0]_{p \times \nu} & 0_p & C_2(u) \\
0_\nu & [0]_{\nu \times p} & A(u)^\star
\end{bmatrix}$$
with $A(u)$ strictly upper-triangular, 
we find
\begin{multline*}
\tr(v  u) =\tr\bigl(I(v)^T A(u)\bigr)+\tr\bigl(C_1(v)^\star P C_2(u)\bigr)\\
+\tr\bigl(C_1(v)C_2(u)^\star P\bigr)+\tr\bigl(D_1(v)D_2(u)\bigr)+\tr\bigl((I(v)^\star)^T A(u)^\star\bigr),
\end{multline*}
which, as $P$ is Hermitian, can be rewritten as follows:
$$\tr(v u)=\Tr_{\F/\K}\bigl(\tr(I(v)^T A(u)\bigr)
+\Tr_{\F/\K}\Bigl(\bigl(\tr(C_1(v)^\star P C_2(u)\bigr)\Bigr)+\tr\bigl(D_1(v)D_2(u)\bigr).$$
Using the Trace Orthogonality Lemma, we deduce that
the range of $\chi$ is $c$-orthogonal to the direct image of $\calV  \cap \WH_{b,\calF}$ under
the $\K$-linear mapping
$$\chi' : u \in \WH_{b,\calF} \mapsto (A(u),C_2(u),D_2(u)) \in \NT_\nu(\F) \times \Mat_{p,\nu}(\F) \times \Math_\nu(\F)$$
for the symmetric $\K$-bilinear form $c$ defined as follows on $\NT_\nu(\F) \times \Mat_{p,\nu}(\F) \times \Math_\nu(\F)$:
$$c\bigl((N,C,D),(N',C',D')\bigr) =
\Tr_{\F/\K}\bigl(\tr(N^TN')\bigr)+\Tr_{\F/\K}\bigl(\tr(C^\star P C')\bigr)
+\tr(DD').$$
Using the fact that $P$ is invertible, one checks that $c$ is non-degenerate, and one deduces that
\begin{align*}
\rk \chi+\dim_\K \bigl(\chi'(\calV \cap \WH_{b,\calF})\bigr)
& \leq \dim_\K \bigl(\NT_\nu(\F) \times \Mat_{p,\nu}(\F) \times \Math_\nu(\F)\bigr) \\
& \leq 2\dbinom{\nu}{2}+2p\nu+\nu^2=\nu(2n-2\nu-1).
\end{align*}
Since $\chi'$ is obviously injective, we conclude that
$$\dim_\K \calV =\rk \chi+\dim_\K \bigl(\chi'(\calV \cap \WH_{b,\calF})\bigr)\leq \nu(2n-2\nu-1).$$
As in Section \ref{DirectMORSection}, the following conclusion can be drawn from the above proof and from
Lemma \ref{stableflaghermitian}:

\begin{lemma}\label{doubleortholemmaHermitian}
Let $\calV$ be a nilpotent $\K$-linear subspace of $\calH_b$ with dimension $\nu(2n-2\nu-1)$.
Let $v \in \calH_b$ be nilpotent and such that $\forall u \in \calV, \; \tr(uv)=0$.
Then, $v \in \calV$.
\end{lemma}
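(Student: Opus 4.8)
The plan is to transpose the proof of Lemma~\ref{doubleortholemma} to the Hermitian setting. First I would apply Lemma~\ref{stableflaghermitian} to obtain a maximal partially complete $b$-singular flag $\calF$ of $V$ that is stable under $v$; as in the argument just given, this means $v \in \WH_{b,\calF}$ (in particular $v$ maps $F_\nu^\bot$ into $F_\nu$, so $\chi'(v)$ makes sense). I then fix a basis $\bfB$ of $V$ strongly adapted to $\calF$ and recall the $\K$-linear maps
$$\chi : \calV \longrightarrow \NT_\nu(\F) \times \Mat_{p,\nu}(\F) \times \Math_\nu(\F) \quad\text{and}\quad \chi' : \WH_{b,\calF} \longrightarrow \NT_\nu(\F) \times \Mat_{p,\nu}(\F) \times \Math_\nu(\F)$$
together with the non-degenerate symmetric $\K$-bilinear form $c$ attached to $\bfB$, exactly as above.

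Next I would feed in the trace hypothesis. For every $u \in \calV$ the pairing identity proved above (valid because $v \in \WH_{b,\calF}$) reads $\tr(uv) = c\bigl(\chi(u),\chi'(v)\bigr)$, so from $\tr(uv)=0$ we get that $\chi'(v)$ lies in the $c$-orthogonal complement of $\im \chi$. Now the dimension hypothesis enters: since $\dim_\K \calV = \nu(2n-2\nu-1) = \dim_\K\bigl(\NT_\nu(\F) \times \Mat_{p,\nu}(\F) \times \Math_\nu(\F)\bigr)$, every inequality in the argument above must be an equality, whence $\rk \chi + \dim_\K \chi'\bigl(\calV \cap \WH_{b,\calF}\bigr)$ equals the full dimension of the target space. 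As $c$ is non-degenerate, the $c$-orthogonal complement of $\im \chi$ has dimension equal to that of the target minus $\rk \chi$; combining this with the $c$-orthogonality of $\im \chi$ and $\chi'\bigl(\calV \cap \WH_{b,\calF}\bigr)$ forces $\chi'\bigl(\calV \cap \WH_{b,\calF}\bigr)$ to be precisely that orthogonal complement. Hence $\chi'(v) \in \chi'\bigl(\calV \cap \WH_{b,\calF}\bigr)$, and the injectivity of $\chi'$ on $\WH_{b,\calF}$ yields $v \in \calV \cap \WH_{b,\calF} \subset \calV$.

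I expect no genuine difficulty here, since the whole thing is a rerun of Lemma~\ref{doubleortholemma}; the only step that requires a moment's care is the equality-case bookkeeping, i.e.\ noting that the rank-theorem identity $\dim_\K \calV = \rk \chi + \dim_\K(\calV \cap \WH_{b,\calF})$, the injectivity of $\chi'$, and the inequality $\rk \chi + \dim_\K \chi'(\calV \cap \WH_{b,\calF}) \leq \dim_\K\bigl(\NT_\nu(\F) \times \Mat_{p,\nu}(\F) \times \Math_\nu(\F)\bigr)$ together with the maximality of $\dim_\K \calV$ force that inequality to be tight, and then that tightness plus orthogonality plus non-degeneracy of $c$ pins down $\chi'(\calV \cap \WH_{b,\calF})$ as the entire $c$-orthogonal complement of $\im \chi$. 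A minor sanity check along the way is that $\tr(uv)$ really lands in $\K$ when $v \in \WH_{b,\calF}$, which is clear from the form of the pairing (each summand is either a value of $\Tr_{\F/\K}$ or the trace of a product of two Hermitian matrices, hence $\star$-fixed).
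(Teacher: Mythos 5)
Your proposal is correct and follows exactly the route the paper intends: the paper's own justification of Lemma~\ref{doubleortholemmaHermitian} is simply the remark that it follows from the preceding argument and Lemma~\ref{stableflaghermitian}, and you have spelled out precisely that transposition of the proof of Lemma~\ref{doubleortholemma} (flag stable under $v$, so $v \in \WH_{b,\calF}$; identify $\chi'(v)$ in the $c$-orthogonal complement of $\im\chi$; use the dimension count to see that $\chi'(\calV \cap \WH_{b,\calF})$ is the whole complement; conclude by injectivity of $\chi'$). Your side remark that $\tr(uv)$ lands in $\K$ is also a correct and worthwhile sanity check.
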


\subsection{The case of equality in Theorem \ref{HermitianTheo}}

The line of reasoning here is similar to the one of the proof of Theorem \ref{EqualitySymSymtheo}.
First of all, we note that the square of a $b$-Hermitian endomorphism is still a $b$-Hermitian endomorphism:
indeed, we have noted that $u \in \End_\F(V)$ is $b$-Hermitian if and only if
$$\forall (x,y)\in V^2, \; b(x,u(y))=b(u(x),y),$$
and it is obvious that if $u$ satisfies this property then so does $u^2$.

Next, we take a nilpotent $\K$-linear subspace $\calV$ of $\calH_b$ with dimension $\nu(2n-2\nu-1)$. We assume that $|\F|>4$, so that $|\K|>2$.

Let $u \in \calV$.
Let $v \in \calV$. Hence, $v$ is nilpotent, and $u+\lambda v$ is nilpotent for all $\lambda \in \K$. Since $|\K|>2$, the Trace Orthogonality Lemma
yields $\tr(u^2 v)=0$. Hence, Lemma \ref{doubleortholemmaHermitian} shows that $u^2$ belongs to $\calV$.

We have just shown that $\calV$ is stable under squares, and we deduce, as in the proof of Proposition \ref{EqualitySymSymprop}, that it is stable under the Jordan product. Using Jacobson's triangularization theorem, we arrive at the following partial result:

\begin{prop}\label{EqualityHermHermprop}
Let $b$ be a non-degenerate Hermitian form on an $n$-dimensional vector space $V$ over $\F$. Denote by $\nu$ the Witt index of $b$, and let $\calV$ be a nilpotent $\K$-linear subspace of $\calH_b$ with dimension $\nu(2n-2\nu-1)$. Assume finally that $|\F|>4$. Then, $\calV$ is triangularizable.
\end{prop}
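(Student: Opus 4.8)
The plan is to reuse the mechanism already developed for Proposition~\ref{EqualitySymSymprop}: show that $\calV$ is closed under squares, deduce that it is closed under the Jordan product, and then invoke Jacobson's triangularization theorem.

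First I would record that, since $\F$ is a quadratic extension of $\K$, the hypothesis $|\F|>4$ forces $|\K|>2$ (if $\K$ is finite then $|\F|=|\K|^2$, hence $|\K|\geq 3$; if $\K$ is infinite this is trivial). Now fix $u\in\calV$. For every $v\in\calV$ and every $\lambda\in\K$, the endomorphism $u+\lambda v$ lies in the $\K$-linear nilpotent subspace $\calV$ and is therefore nilpotent; as $|\K|>2$, there are at least three scalars $\lambda\in\F$ for which $u+\lambda v$ is nilpotent, so part~(b) of the Trace Orthogonality Lemma (Lemma~\ref{tracecor}, with $k=2$) yields $\tr(u^2 v)=0$. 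Moreover $u^2$ is nilpotent, and it is $b$-Hermitian because the defining identity $b(x,w(y))=b(w(x),y)$ of $\calH_b$ is visibly inherited by $w^2$. Thus $u^2$ is a nilpotent $b$-Hermitian endomorphism that is $\tr$-orthogonal to every element of $\calV$, and Lemma~\ref{doubleortholemmaHermitian} gives $u^2\in\calV$.

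With $\calV$ stable under squares, for all $u,v\in\calV$ we obtain $uv+vu=(u+v)^2-u^2-v^2\in\calV$, so $\calV$ is a $\K$-linear subspace of $\End_\F(V)$ that is closed under the Jordan product and consists of nilpotent operators. Jacobson's triangularization theorem \cite{Jacobson,Radjavi} then furnishes a complete flag of $\F$-linear subspaces of $V$ stabilized by every element of $\calV$, i.e.\ $\calV$ is triangularizable.

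The only genuine point of care is checking that $u^2\in\calH_b$ (so that Lemma~\ref{doubleortholemmaHermitian} applies), and this is immediate from the characterization of $\calH_b$. The heavier external ingredient, Jacobson's theorem for Jordan-closed sets of nilpotents, enters exactly as in the proof of Proposition~\ref{EqualitySymSymprop}, so I do not expect any new obstacle here; the mild cardinality restriction $|\F|>4$ is precisely what is needed to make the trace identity $\tr(u^2v)=0$ available.
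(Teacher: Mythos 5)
Your proposal is correct and follows exactly the route the paper takes: square-stability of $\calV$ via part (b) of the Trace Orthogonality Lemma (using $|\K|>2$) together with Lemma~\ref{doubleortholemmaHermitian}, then closure under the Jordan product, then Jacobson's triangularization theorem. The small supporting facts you spell out (that $|\F|>4$ implies $|\K|>2$, and that $\calH_b$ is stable under squaring) are the same ones the paper uses, only stated more explicitly.
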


From there, the line of reasoning of Section \ref{MaxDimSection} applies effortlessly to yield the statement on the case of equality in Theorem \ref{HermitianTheo}.

\end{document}